\date{\today}
\theoremstyle{plain}
\newtheorem{theorem}{Theorem}
\newtheorem{corollary}[theorem]{Corollary}
\newtheorem{lemma}[theorem]{Lemma}
\newtheorem{proposition}[theorem]{Proposition}
\newtheorem{conjecture}[theorem]{Conjecture}
\theoremstyle{definition}
\newtheorem{remark}[theorem]{Remark} 
\theoremstyle{remark}
\newtheorem*{associativity}{Associativity}
\newtheorem*{compatibility}{Compatibility}
\newtheorem*{warning}{Warning}
\def\id{\mathrm{id}}
\def\Id{\mathrm{Id}}
\DeclareMathOperator{\End}{End}
\def\llb{{[\![}}
\def\rrb{{]\!]}}
\newcommand{\abs}[1]{\lvert#1\rvert} 
\newcommand{\pmat}[1]{\begin{pmatrix}#1\end{pmatrix}} 
\newcommand{\gel}{>_{\ell}}
\newcommand{\lel}{<_{\ell}}
\newcommand{\one}{\mathbf{1}}
\newcommand{\euler}{\mathbf{e}}
\let\onto=\twoheadrightarrow
\let\into=\hookrightarrow
\let\map=\xrightarrow
\newcommand{\xyinc}{\ar@{^{(}->}}
\newcommand{\type}[2]{\mathsf T_{#1}(#2)}
\def\field{\Bbbk}
\newcommand{\Sr}{\mathrm{S}} 
\newcommand{\rL}{\mathrm{L}}
\newcommand{\rG}{\mathrm{G}}
\newcommand{\rD}{\mathrm{D}}
\newcommand{\bL}{\mathbf L}
\newcommand{\bh}{\mathbf h}
\newcommand{\bk}{\mathbf k}
\newcommand{\bPi}{\mathbf \Pi}
\newcommand{\bG}{\mathbf{G}} 
\newcommand{\bp}{\mathbf p}
\newcommand{\bq}{\mathbf q}
\newcommand{\bmm}{\mathbf m}
\newcommand{\bd}{\mathbf d}
\newcommand{\Fc}{\mathcal{F}}
\newcommand{\Tc}{\mathcal{T}}
\newcommand{\Kc}{\mathcal{K}}
\newcommand{\Kcb}{\overline{\Kc}}
\newcommand{\Nb}{\mathbb{N}}
\newcommand{\Fb}{\mathbb{F}}
\newcommand{\Nf}{\mathfrak{n}}
\newcommand{\bNf}{\overline{\Nf}}
\newcommand{\Mf}{\mathfrak{m}}
\newcommand{\bMf}{\overline{\Mf}}
\newcommand{\GL}{\mathrm{GL}}
\newcommand{\Un}{\mathrm{U}}
\newcommand{\Mat}{\mathrm{M}}
\newcommand{\FC}{\bm{\mathrm{f}}}
\newcommand{\CF}{\bm{\mathrm{cf}}}
\newcommand{\SC}{\bm{\mathrm{scf}}}
\newcommand{\qand}{\quad\text{and}\quad}
\newcommand{\qqand}{\qquad\text{and}\qquad}
\author{Marcelo Aguiar}
\address[Aguiar]{
	Department of Mathematics\\
        Texas A\&M University\\
        College Station, TX 77843 
                }
\email{maguiar@math.tamu.edu}
\urladdr{http://www.math.tamu.edu/$\small\sim$maguiar}
\thanks{Aguiar supported in part by NSF grant DMS-1001935.}
\author{Nantel Bergeron}\address[Bergeron]
{Department of Mathematics and Statistics\\ York  University\\ To\-ron\-to, Ontario M3J 1P3\\ CANADA}
\email{bergeron@mathstat.yorku.ca}
\urladdr{http://www.math.yorku.ca/bergeron}
\thanks{Bergeron supported in part by CRC and NSERC}
 \author{Nathaniel Thiem}\address[Thiem]
 {Department of Mathematics\\University of Colorado\\ Boulder, CO 80309}
 \email{thiem@colorado.edu}
\urladdr{http://math.colorado.edu/$\small\sim$thiemn}
\thanks{Thiem supported in part by NSF FRG DMS-0854893}
\title[{H}opf monoids from functions on unitriangular matrices]{{H}opf monoids from  class functions on unitriangular matrices}
\keywords{Unitriangular matrix, class function, superclass function, Hopf monoid, Hopf algebra} 
\subjclass[2010]{05E05; 05E10; 05E15; 16T05; 16T30; 18D35; 20C33}
\begin{document}

\begin{abstract}
We build, from the collection of all groups
of unitriangular matrices, Hopf monoids in Joyal's
category of species. Such structure is carried by the
 collection of class function spaces on those groups,
and also by the collection of superclass function spaces,
 in the sense of Diaconis and Isaacs. 
Superclasses of unitriangular matrices admit a simple 
description from which we deduce a combinatorial model for the
Hopf monoid of superclass functions, in terms
of the Hadamard product of the Hopf monoids of linear orders and of
set partitions. This implies a recent result 
relating the Hopf algebra of superclass functions on unitriangular matrices
to symmetric functions in noncommuting variables.
We determine the algebraic structure of the Hopf monoid:
it is a free monoid in species, with the canonical Hopf structure.
As an application, we derive certain
estimates on the number of conjugacy classes of unitriangular matrices.
\end{abstract}

\maketitle

\section*{Introduction}

A Hopf monoid (in Joyal's category of species)
is an algebraic structure akin to that of a Hopf algebra.
Combinatorial structures which compose and decompose give rise
to Hopf monoids. These objects are the subject of~\cite[Part~II]{AM:2010}.
The few basic notions and examples needed for our purposes are reviewed in
Section~\ref{s:hopf}, including the Hopf monoids of linear orders,
set partitions, and simple graphs, and the Hadamard product of Hopf monoids.

The main goal of this paper is to construct a Hopf monoid out 
of the groups of unitriangular matrices with entries
in a finite field, and to do this in a transparent manner.
The structure exists on the collection of function spaces on these groups, and 
also on the collections of class function and superclass function spaces. It is induced by two
simple operations on this collection of groups: the passage from a matrix
to its principal minors gives rise to the product, and direct sum of matrices
gives rise to the coproduct.

Class functions are defined for arbitrary groups. 
An abstract notion and theory of superclass functions (and supercharacters)
for arbitrary groups exists~\cite{DI:2008}.
While a given group may admit several such theories,
there is a canonical choice of superclasses
for a special class of groups known as algebra groups.
These notions are briefly discussed in Section~\ref{ss:super-group}.
Unitriangular groups
are the prototype of such groups, and we employ the corresponding
notion of superclasses in Section~\ref{ss:super-unit}. The study of unitriangular
superclasses and supercharacters was initiated in~\cite{And:1995,And:1995-2},
making use of the Kirillov method~\cite{Kir:1995}, and by more elementary
means in~\cite{Yan:2001}.

Preliminaries on unitriangular matrices are discussed in Section~\ref{s:unitriang}.
The Hopf monoids $\FC(\Un)$ of functions and $\CF(\Un)$ of class functions are constructed in Section~\ref{s:hopf-class}.
The nature of the construction is fairly general; in particular, the same procedure
yields the Hopf monoid $\SC(\Un)$ of superclass functions in Section~\ref{ss:super-unit}. 

Unitriangular matrices over $\Fb_2$ may be identified with simple graphs,
and direct sums and the passage to principal minors correspond to simple
operations on graphs.
This yields a combinatorial model for $\FC(\Un)$ 
in terms of the Hadamard product of the Hopf monoids of linear orders and
of graphs, as discussed in Section~\ref{ss:f-model}.
The conjugacy classes on
the unitriangular groups exhibit great complexity and considerable attention has
been devoted to their study~\cite{Goo:2006,Hig:1960,Kir:1995,VAOV:2008}. 
We do not attempt an explicit combinatorial description of the Hopf monoid 
$\CF(\Un)$. On the other hand, superclasses are well-understood  (Section~\ref{ss:super-class}),  
and such a combinatorial description exists for $\SC(\Un)$. 
In Section~\ref{ss:super-structure}, we obtain a combinatorial model in terms of
the Hadamard product of the Hopf monoids of linear orders and of set partitions.
This has as a consequence the main result of~\cite{AIM:2012},
as we explain in Section~\ref{ss:hopfalg}.

Employing the combinatorial models, we derive
structure theorems for the Hopf monoids
$\FC(\Un)$ and $\SC(\Un)$ in Section~\ref{s:free}. 
Our main results state that both are free monoids 
with the canonical Hopf structure (in which the generators are primitive). 

Applications are presented in Section~\ref{s:additional}.
With the aid of Lagrange's theorem for Hopf monoids, one may derive
estimates on the number of conjugacy classes of unitriangular matrices,
in the form of certain recursive inequalities .
We obtain this application in Section~\ref{ss:counting}, where we also formulate
a refinement of Higman's conjecture on the polynomiality of these numbers. 
Other applications
involving the Hopf algebra of superclass functions of~\cite{AIM:2012}
are given in Section~\ref{ss:hopfalg}.

\smallskip

We employ two fields: the \emph{base} field $\field$ and the field of \emph{matrix entries} $\Fb$. We consider algebras and groups of matrices with entries in $\Fb$; all
other vector spaces are over $\field$. The field of matrix entries is often assumed
to be finite, and sometimes to be $\Fb_2$.

\subsection*{Acknowledgements} We thank the referee and Franco Saliola
for useful comments and suggestions.

\section{Hopf monoids}\label{s:hopf}

We review the basics on Hopf monoids and recall three examples built from linear orders, set partitions, and simple graphs respectively. We also consider the Hadamard product of Hopf monoids. 
In later sections, Hopf monoids are built from functions on unitriangular
matrices. The constructions of this section will allow us to provide
combinatorial models for them.

\subsection{Species and Hopf monoids}\label{ss:sp-hopf}
For the precise definition of \emph{vector species} and \emph{Hopf monoid}
we refer to~\cite[Chapter~8]{AM:2010}. The main ingredients are reviewed below.

A \emph{vector species} $\bp$ is a collection of vector spaces $\bp[I]$,
one for each finite set $I$, equivariant with respect to bijections $I\cong J$.
A morphism of species $f:\bp\to\bq$ is a collection of linear maps
$f_I:\bp[I]\to\bq[I]$ which commute with bijections.

A \emph{decomposition} of a finite set $I$ is a finite sequence $(S_1,\ldots,S_k)$
of disjoint subsets of $I$ whose union is $I$. In this situation, we write
\[
I=S_1\sqcup\cdots\sqcup S_k.
\]

A \emph{Hopf monoid} consists of a vector species $\bh$
equipped with two collections $\mu$ and $\Delta$ of linear maps
\[
\bh[S_1]\otimes\bh[S_2] \map{\mu_{S_1,S_2}}\bh[I]
\qand
\bh[I]\map{\Delta_{S_1,S_2}} \bh[S_1]\otimes\bh[S_2].
\]
There is one map in each collection for each finite set $I$ and each decomposition $I=S_1\sqcup S_2$. This data is subject to a number of axioms, of which the main ones follow.

\begin{associativity}
For each decomposition $I=S_1\sqcup S_2\sqcup S_3$, diagrams
\begin{gather}\label{e:assoc}
\begin{gathered}
\xymatrix@R+1pc@C+20pt{
\bh[S_1]\otimes\bh[S_2]\otimes\bh[S_3]\ar[r]^-{\id\otimes\mu_{S_2,S_3}}
\ar[d]_{\mu_{S_1,S_2}\otimes\id} & \bh[S_1]\otimes\bh[S_2\sqcup S_3]\ar[d]^{\mu_{S_1,S_2\sqcup S_3}} \\
\bh[S_1\sqcup S_2]\otimes \bh[S_3]\ar[r]_-{\mu_{S_1\sqcup S_2,S_3}} & \bh[I]
}
\end{gathered}
\\
\label{e:coassoc}
\begin{gathered}
\xymatrix@R+1pc@C+40pt{
\bh[I]\ar[r]^-{\Delta_{S_1\sqcup S_2,S_3}} \ar[d]_{\Delta_{S_1,S_2\sqcup S_3}}
& \bh[S_1\sqcup S_2]\otimes \bh[S_3]\ar[d]^{\Delta_{S_1,S_2}\otimes\id}
\\
\bh[S_1]\otimes\bh[S_2\sqcup S_3]\ar[r]_-{\id\otimes\Delta_{S_2,S_3}}
& \bh[S_1]\otimes\bh[S_2]\otimes\bh[S_3]
}
\end{gathered}
\end{gather}
commute.
\end{associativity}

\begin{compatibility}
Fix decompositions $S_1\sqcup S_2=I=T_1\sqcup T_2$,
and consider the resulting pairwise intersections:
\[
A:=S_1\cap T_1,\ B:=S_1\cap T_2,\ C:=S_2\cap T_1,\ D:=S_2\cap T_2,
\]
as illustrated below.
\begin{equation}\label{e:4sets}
\begin{gathered}
\begin{picture}(100,90)(20,0)
  \put(50,40){\oval(100,80)}
  \put(0,40){\dashbox{2}(100,0){}}
  \put(45,55){$S_1$}
  \put(45,15){$S_2$}
\end{picture}
\quad
\begin{picture}(100,90)(10,0)
  \put(50,40){\oval(100,80)}
  \put(50,0){\dashbox{2}(0,80){}}
  \put(20,35){$T_1$}
  \put(70,35){$T_2$}
\end{picture}
\quad
\begin{picture}(100,90)(0,0)
\put(50,40){\oval(100,80)}
  \put(0,40){\dashbox{2}(100,0){}}
  \put(50,0){\dashbox{2}(0,80){}}
  \put(20,55){$A$}
  \put(70,55){$B$}
  \put(20,15){$C$}
  \put(70,15){$D$}
\end{picture}
\end{gathered}
\end{equation}

For any such pair of decompositions, the diagram
\begin{equation}\label{e:comp}
\begin{gathered}
\xymatrix@R+2pc@C-5pt{
\bh[A] \otimes \bh[B] \otimes \bh[C] \otimes \bh[D] \ar[rr]^{\cong} & &
\bh[A] \otimes \bh[C] \otimes \bh[B] \otimes \bh[D] \ar[d]^{\mu_{A,C}
\otimes \mu_{B,D}}\\
\bh[S_1] \otimes \bh[S_2] \ar[r]_-{\mu_{S_1,S_2}}\ar[u]^{\Delta_{A,B} \otimes
\Delta_{C,D}} & \bh[I] \ar[r]_-{\Delta_{T_1,T_2}} & \bh[T_1] \otimes
\bh[T_2]
}
\end{gathered}
\end{equation}
must commute. The top arrow stands for the map that interchanges the middle factors.
\end{compatibility}

In addition, the Hopf monoid $\bh$ is \emph{connected} if
 $\bh[\emptyset]=\field$ and the maps
\[
\xymatrix{
\bh[I]\otimes\bh[\emptyset]  \ar@<0.5ex>[r]^-{\mu_{I,\emptyset}} & 
\bh[I] \ar@<0.5ex>[l]^-{\Delta_{I,\emptyset}} 
}
\qqand
\xymatrix{
\bh[\emptyset]\otimes\bh[I]  \ar@<0.5ex>[r]^-{\mu_{\emptyset,I}} & 
\bh[I] \ar@<0.5ex>[l]^-{\Delta_{\emptyset,I}} 
}
\]
are the canonical identifications.

The collection $\mu$ is the \emph{product} and the collection $\Delta$
is the \emph{coproduct} of the Hopf monoid $\bh$.

A Hopf monoid is (co)commutative if the left (right) diagram below commutes
for all decompositions $I=S_1\sqcup S_2$.
\begin{equation}\label{e:comm}
\begin{gathered}
\xymatrix@C-15pt{
\bh[S_1]\otimes\bh[S_2] \ar[rr]^-{\cong} \ar[rd]_{\mu_{S_1,S_2}}  & & \bh[S_2]\otimes\bh[S_1] \ar[ld]^{\mu_{S_2,S_1}} \\
& \bh[I] &
}
\qquad
\xymatrix@C-15pt{
\bh[S_1]\otimes\bh[S_2] \ar[rr]^-{\cong}   & &\bh[S_2]\otimes\bh[S_1]  \\
& \bh[I] \ar[lu]^{\Delta_{S_1,S_2}} \ar[ru]_{\Delta_{S_2,S_1}} &
}
\end{gathered}
\end{equation}
The top arrows stand for
the map that interchanges the factors.

A morphism of Hopf monoids $f:\bh\to\bk$ is a morphism of species that
commutes with $\mu$ and $\Delta$.

\subsection{The Hopf monoid of linear orders}\label{ss:order}

For any finite set $I$,
$\rL[I]$ is the set of all linear orders on $I$. For instance, if $I=\{a,b,c\}$,
\[
\rL[I]=\{abc,\,bac,\,acb,\,bca,\,cab,\,cba\}.
\]
Let $\bL[I]$ denote the vector space with basis $\rL[I]$. The collection $\bL$ is a vector species.

Let $I=S_1\sqcup S_2$.
Given linear orders $\ell_i$ on $S_i$, $i=1,2$,
their concatenation $\ell_1\cdot \ell_2$ 
is a linear order on $I$. This is the list consisting of the elements
of $S_1$ as ordered by $\ell_1$, followed by those of $S_2$ as ordered by $\ell_2$.
Given a linear order $\ell$ on $I$ and $S\subseteq I$, the restriction 
 $\ell|_S$ (the list consisting of the elements of $S$ written in the order
 in which they appear in $\ell$) is a linear order on $S$.
These operations give rise to maps
\begin{equation}\label{e:L}
\begin{array}{rcl}
\rL[S_1]\times\rL[S_2] & \to & \rL[I] \\
(\ell_1,\ell_2) & \mapsto & \ell_1\cdot\ell_2
\end{array}
\qquad
\begin{array}{rcl}
 \rL[I]  & \to & \rL[S_1]\times\rL[S_2]\\
  \ell  & \mapsto& (\ell |_{S_1},\ell |_{S_2}).
\end{array}
\end{equation}
Extending by linearity, we obtain linear maps
\[
 \mu_{S_1,S_2}:\bL[S_1]\otimes\bL[S_2] \to \bL[I]
 \qand
\Delta_{S_1,S_2}:\bL[I]\to \bL[S_1]\otimes\bL[S_2]
\]
which turn $\bL$ into a Hopf monoid. For instance,
given linear orders
$\ell_i$ on $S_i$, $i=1,2$, the commutativity of~\eqref{e:comp} boils down to the fact that the concatenation of $\ell_1|_A$ and $\ell_2|_C$ agrees with the restriction to $T_1$ of $\ell_1\cdot \ell_2$. The Hopf monoid $\bL$ is cocommutative but not commutative.
For more details, see~\cite[Section~8.5]{AM:2010}.

\subsection{The Hopf monoid of set partitions}\label{ss:partition}

A \emph{partition} of a finite set $I$ is a collection $X$ of disjoint nonempty subsets whose union is $I$. The subsets are the \emph{blocks} of $X$.

Given a partition $X$ of $I$ and $S\subseteq I$, the restriction
$X|_S$ is the partition of $S$ whose blocks are 
the nonempty intersections of the blocks of $X$ with $S$.
Let $I=S_1\sqcup S_2$.
Given partitions $X_i$ of $S_i$, $i=1,2$, 
their union is the partition $X_1\sqcup X_2$ of $I$ 
whose blocks are the blocks of $X_1$ and the blocks of $X_2$.
A quasi-shuffle of $X_1$ and $X_2$ is any partition of $I$ whose
restriction to $S_i$ is $X_i$, $i=1,2$. 

Let $\Pi[I]$ denote the set of partitions of $I$ and $\bPi[I]$ the vector
space with basis $\Pi[I]$. 
A Hopf monoid structure on $\bPi$ is defined and studied in~\cite[Section~12.6]{AM:2010}. Among its various linear bases, we are interested in the basis $\{m_X\}$
on which the operations are as follows. 
The product
\[
\mu_{S_1,S_2}: \bPi[S_1] \otimes \bPi[S_2] \to \bPi[I] 
\]
is given by
\begin{equation}\label{e:prod-m}
\mu_{S_1,S_2}(m_{X_1} \otimes m_{X_2}) = \sum_{X:  \  X|_{S_1}=X_1 \text{ and } X|_{S_2}=X_2 } m_X.
\end{equation}
The coproduct
\[
\Delta_{S_1,S_2}: \bPi[I]  \to \bPi[S_1]\otimes\bPi[S_2]
\]
is given by
\begin{equation}\label{e:coprod-m}
\Delta_{S_1,S_2}(m_X) = 
\begin{cases}
m_{X|_{S_1}} \otimes m_{X|_{S_2}}  & \text{if $S_1$ is the union of some blocks of $X$,} \\
0 & \text{otherwise.}
\end{cases}
\end{equation}
Note that the following conditions are equivalent, for a partition $X$ of $I$.
\begin{itemize}
\item $S_1$ is the union of some blocks of $X$.
\item $S_2$ is the union of some blocks of $X$.
\item $X=X|_{S_1}\sqcup X|_{S_2}$.
\end{itemize}

These operations turn the species $\bPi$ into a Hopf monoid that is both commutative and cocommutative.

\subsection{The Hopf monoid of simple graphs}\label{ss:graph}
A \emph{(simple) graph} $g$ on a finite set $I$ is a relation on $I$ that is symmetric and
irreflexive. The elements of $I$ are the \emph{vertices} of $g$
There is an \emph{edge} between two vertices when they are related by $g$.

 Given a graph $g$ on $I$ and $S\subseteq I$, the restriction
$g|_S$ is the graph on $S$ whose edges are the edges of $g$ between elements of $S$.
Let $I=S_1\sqcup S_2$.
Given graphs $g_i$ of $S_i$, $i=1,2$, 
their union is the graph $g_1\sqcup g_2$ of $I$ 
whose edges are those of $g_1$ and those of $g_2$.
A quasi-shuffle of $g_1$ and $g_2$ is any graph on $I$ whose
restriction to $S_i$ is $g_i$, $i=1,2$. 

Let $\rG[I]$ denote the set of graphs on $I$ and $\bG[I]$ the vector
space with basis $\rG[I]$. 
A Hopf monoid structure on $\bG$ is defined and studied in~\cite[Section~13.2]{AM:2010}. We are interested in the basis $\{m_g\}$
on which the operations are as follows. 
The product
\[
\mu_{S_1,S_2}: \bG[S_1] \otimes \bG[S_2] \to \bG[I] 
\]
is given by
\begin{equation}\label{e:prod-g}
\mu_{S_1,S_2}(m_{g_1} \otimes m_{g_2}) = \sum_{g: \  g|_{S_1}=g_1 \text{ and }  g|_{S_2}=g_2} m_g.
\end{equation}
The coproduct
\[
\Delta_{S_1,S_2}: \bG[I]  \to \bG[S_1]\otimes\bG[S_2]
\]
is given by
\begin{equation}\label{e:coprod-g}
\Delta_{S_1,S_2}(m_g) = 
\begin{cases}
m_{g|_{S_1}} \otimes m_{g|_{S_2}}  & \text{if no edge of $g$ connects $S_1$ to $S_2$,} \\
0 & \text{otherwise.}
\end{cases}
\end{equation}
Note that no edge of $g$ connects $S_1$ to $S_2$ if and only if 
$g=g|_{S_1}\sqcup g|_{S_2}$.

These operations turn the species $\bG$ into a Hopf monoid that is both commutative and cocommutative.

\begin{remark}\label{r:selfdual}
The dual of a species $\bp$ is the collection $\bp^*$ of dual vector spaces:
$\bp^*[I]=\bp[I]^*$.
A species $\bp$ is said to be finite-dimensional if each space $\bp[I]$
is finite-dimensional.
Dualizing the operations of a finite-dimensional Hopf monoid $\bh$, 
one obtains a Hopf monoid $\bh^*$. The Hopf monoid $\bh$ is called self-dual
if $\bh\cong\bh^*$. In general, such isomorphism is not unique.

Over a field of characteristic $0$, a Hopf monoid that is connected, commutative and cocommutative, is always
self-dual. This is a consequence of the Cartier-Milnor-Moore theorem.
(The isomorphism with the dual is not canonical.)

In particular, the Hopf monoids $\bPi$ and $\bG$ are self-dual.
In~\cite{AM:2010}, the preceding descriptions of these Hopf monoids are
stated in terms of their duals $\bPi^*$ and $\bG^*$. 
A different description of $\bPi$ is given in~\cite[Section~12.6.2]{AM:2010}.
To reconcile the two, one should use the explicit isomorphism
$\bPi\cong\bPi^*$ given in~\cite[Proposition~12.48]{AM:2010}.
\end{remark}

\subsection{The Hadamard product}\label{ss:hadamard}

Given species $\bp$ and $\bq$, their \emph{Hadamard product} is the species $\bp\times\bq$ defined by
\[
(\bp\times\bq)[I] = \bp[I]\otimes\bq[I].
\]
If $\bh$ and $\bk$ are Hopf monoids, then so is $\bh\times\bk$, with the following operations. Let $I=S_1\sqcup S_2$. The product is
\[
\xymatrix@C+12pt{
(\bh\times\bk)[S_1]\otimes(\bh\times\bk)[S_2] \ar@{=}[d] \ar@{-->}[rr] &
& (\bh\times\bk)[I] \ar@{=}[d] \\
\bh[S_1]\otimes \bk[S_1]\otimes \bh[S_2]\otimes \bk[S_2]
\ar[r]_{\cong} & 
\bh[S_1]\otimes \bh[S_2]\otimes \bk[S_1]\otimes \bk[S_2]
\ar[r]_-{\mu_{S,T}\otimes\mu_{S,T}} &
\bh[I]\otimes\bk[I]
}
\]
and the coproduct is defined dually. If $\bh$ and $\bk$ are (co)commutative, then so is $\bh\times\bk$. For more details, see~\cite[Section~8.13]{AM:2010}.

\section{Unitriangular matrices}\label{s:unitriang}

This section sets up the basic notation pertaining to unitriangular
matrices, and discusses two simple but important constructions: direct sum of
matrices and the passage from a matrix to its principal minors.
The Hopf monoid constructions of later sections are based on them.
The key results are Lemmas~\ref{l:principal-unit} and~\ref{l:direct-principal}.
The former is the reason why we must use unitriangular matrices:
for arbitrary matrices, the passage to principal minors is not multiplicative.
The latter will be responsible (in later sections)
for the necessary compatibility between the product and coproduct of the Hopf
monoids.

\smallskip

Let $\Fb$ be a field, $I$ a finite set, and $\ell$ a linear order on $I$.
Let $\Mat(I)$ denote the algebra of matrices 
\[
\text{$A=(a_{ij})_{i,j\in I}$,
 $a_{ij}\in\Fb$ for all $i,j\in I$.}
\]
The general linear group $\GL(I)$ consists of the invertible matrices in $\Mat(I)$,
and the subgroup $\Un(I,\ell)$ consists of the \emph{upper $\ell$-unitriangular} matrices
\[
\text{$U=(u_{ij})_{i,j\in I}$,
$u_{ii}=1$ for all $i\in I$,
$u_{ij}=0$ whenever $i\gel j$.}
\]
If $\ell'$ is another linear order on $I$, then $\Un(I,\ell)$
and $\Un(I,\ell')$ are conjugate subgroups of $\GL(I)$.
However, we want to keep track of all groups in this collection, and
of the manner in which they interact.

\subsection{Direct sum of matrices}\label{ss:direct}

Let $I=S_1\sqcup S_2$ be a decomposition. Given  $A=(a_{ij})\in\Mat(S_1)$ and $B=(b_{ij})\in\Mat(S_2)$, their direct sum is the matrix
$A\oplus B=(c_{ij})\in\Mat(I)$ with entries
\[
c_{ij}= \begin{cases}
a_{ij} & \text{ if both $i,j\in S_1$,} \\
 b_{ij}        & \text{ if both $i,j\in S_2$,}\\
  0       & \text{ otherwise.}
\end{cases}
\]

Let $\ell\in\rL[I]$. The direct sum of an $\ell|_{S_1}$-unitriangular  and an
$\ell|_{S_2}$-unitriangular matrix is $\ell$-unitriangular.
The morphism of algebras
\[
\Mat(S_1)\times\Mat(S_2) \to \Mat(I),
\quad (A,B) \mapsto A\oplus B
\]
thus restricts to a morphism of groups
\begin{equation}\label{e:sigma}
\sigma_{S_1,S_2} :\Un(S_1,\ell|_{S_1})\times \Un(S_2,\ell|_{S_2}) \to \Un(I,\ell).
\end{equation}
(The dependence of $\sigma_{S_1,S_2}$ on $\ell$ is left implicit).

Direct sum of matrices is associative; thus, for any decomposition $I=S_1\sqcup S_2\sqcup S_3$ 
the diagram
\begin{equation}\label{e:sigma-asso}
\begin{gathered}
\xymatrix@C+15pt{
\Un(S_1,\ell|_{S_1})\times\Un(S_2,\ell|_{S_2})\times\Un(S_3,\ell|_{S_3}) \ar[d]_{\id\times\sigma_{S_2,S_3}}
\ar[r]^-{\sigma_{S_1,S_2}\times\id} & \Un(S_1\sqcup S_2,\ell|_{S_1\sqcup S_2})\times\Un(S_3,\ell|_{S_3})
\ar[d]^{\sigma_{S_1\sqcup S_2,S_3}} \\
\Un(S_1,\ell|_{S_1})\times\Un(S_2\sqcup S_3,\ell|_{S_2\sqcup S_3}) \ar[r]_-{\sigma_{S_1,S_2\sqcup S_3}}
& \Un(I,\ell)
}
\end{gathered}
\end{equation}
commutes. Note also that, with these definitions, $A\oplus B$ and $B\oplus A$
are the same matrix. Thus, the following diagram commutes.
\begin{equation}\label{e:sigma-comm}
\begin{gathered}
\xymatrix@C-15pt{
\Un(S_1,\ell|_{S_1})\times\Un(S_2,\ell|_{S_2}) \ar[rr]^-{\cong} \ar[rd]_{\sigma_{S_1,S_2}} & & \Un(S_2,\ell|_{S_2})\times\Un(S_1,\ell|_{S_1}) \ar[ld]^{\sigma_{S_2,S_1}} \\
& \Un(S_1\sqcup S_2,\ell)&
}
\end{gathered}
\end{equation}


\subsection{Principal minors}\label{ss:principal}

Given  $A=(a_{ij})\in\Mat(I)$, the \emph{principal minor} indexed by $S\subseteq I$
is the matrix 
\[
A_S =(a_{ij})_{i,j\in S}.
\]
In general $A_S$ is not invertible even if $A$ is.
In addition, the assignment $A\mapsto A_S$
does not preserve multiplications.  
On the other hand, if $U$ is $\ell$-unitriangular, then
$U_S$ is $\ell|_S$-unitriangular. In regard to multiplicativity, we have the following
basic fact. 

We say that $S$ is an $\ell$-segment if $i,k\in S$ and $i\lel j\lel k$ 
imply that also $j\in S$.

Let $E_{ij}\in\Mat(I)$ denote the \emph{elementary} matrix in which the $(i,j)$ entry 
is $1$ and all other entries are $0$.

\begin{lemma}\label{l:principal-unit}
Let $\ell\in\rL[I]$ and $S\subseteq I$. The map
\[
\Un(I,\ell) \to \Un(S,\ell|_S),\quad U\mapsto U_S
\]
is a morphism of groups if and only if $S$ is an $\ell$-segment.
\end{lemma}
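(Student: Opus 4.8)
The plan is to prove both directions by directly analyzing when the map $U\mapsto U_S$ is multiplicative. Write $I$ so that $\ell$ orders its elements $i_1\lel i_2\lel\cdots\lel i_n$. For $U,V\in\Un(I,\ell)$ and $i,j\in S$ we have $(UV)_{ij}=\sum_{k\in I}u_{ik}v_{kj}$, whereas $\bigl((U_S)(V_S)\bigr)_{ij}=\sum_{k\in S}u_{ik}v_{kj}$. Thus $(UV)_S=U_S\,V_S$ for all $U,V$ precisely when, for all $i,j\in S$, the ``cross terms'' $\sum_{k\in I\smallsetminus S}u_{ik}v_{kj}$ vanish for all choices of $U,V\in\Un(I,\ell)$. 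Because $U$ is $\ell$-unitriangular, $u_{ik}$ can be nonzero only if $i\lel k$ or $i=k$; similarly $v_{kj}$ can be nonzero only if $k\lel j$ or $k=j$. So a cross term with index $k\in I\smallsetminus S$ can be nonzero only if $i\lel k\lel j$ (note $k\neq i,j$ since $i,j\in S$ but $k\notin S$).

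For the ``if'' direction: suppose $S$ is an $\ell$-segment. Then for $i,j\in S$ and $k\in I\smallsetminus S$ we cannot have $i\lel k\lel j$, since the segment condition would force $k\in S$. Hence every cross term vanishes identically, so $(UV)_S=U_S V_S$; together with the obvious fact that the identity maps to the identity (and that $U_S$ is $\ell|_S$-unitriangular, hence invertible, as noted before the lemma), this shows the map is a group morphism. For the ``only if'' direction: suppose $S$ is not an $\ell$-segment, so there exist $i,k,j$ with $i\lel k\lel j$, $i,j\in S$, $k\notin S$. Take $U=I_I+E_{ik}$ and $V=I_I+E_{kj}$; both are $\ell$-unitriangular since $i\lel k$ and $k\lel j$. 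Then $UV=I_I+E_{ik}+E_{kj}+E_{ij}$, so $(UV)_S$ has a $1$ in the $(i,j)$ entry (as $i,j\in S$), while $U_S=V_S=I_S$ because the off-diagonal entries of $U$ and $V$ sit in rows/columns indexed by $k\notin S$; hence $U_S V_S=I_S\neq(UV)_S$. This witnesses the failure of multiplicativity.

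I do not anticipate a serious obstacle here; the argument is essentially a bookkeeping computation with elementary matrices. The one point requiring a little care is making sure the counterexample matrices $U=I_I+E_{ik}$, $V=I_I+E_{kj}$ genuinely lie in $\Un(I,\ell)$ — this uses exactly $i\lel k$ and $k\lel j$ — and that their principal minors on $S$ really are the identity, which uses $k\notin S$ so that $E_{ik}$ and $E_{kj}$ contribute nothing to the $S\times S$ block. Everything else (diagonal entries are $1$, zero below the diagonal, restriction preserves unitriangularity) has already been recorded in the text preceding the lemma.
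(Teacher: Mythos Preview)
Your proposal is correct and takes essentially the same approach as the paper: the same elementary-matrix counterexample for the ``only if'' direction and the same observation about the $(i,j)$-entry of a product of $\ell$-unitriangular matrices for the ``if'' direction. The only difference is cosmetic (your variable names $i,k,j$ correspond to the paper's $i,j,k$).
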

\begin{proof}
 Suppose the map
is a morphism of groups. Let $i,j,k\in I$ be such that $i,k\in S$ and $i\lel j\lel k$.
The matrices
\[
\Id+E_{ij} \qand \Id+E_{jk}
\]
are in $\Un(I,\ell)$ and
\[
(\Id+E_{ij})\cdot(\Id+E_{jk}) = \Id + E_{ij} + E_{jk} + E_{ik}.
\]
If $j\notin S$, then the two matrices are in the kernel of the map, while their product
is mapped to $\Id+E_{ik}\neq \Id$. Thus, $j\in S$ and $S$ is an $\ell$-segment.

The converse implication follows from the fact that if $U$ and $V$ are $\ell$-unitriangular, then the $(i,k)$-entry of $UV$ is
\[
\sum_{i\leq_{\ell} j \leq_{\ell} k} u_{ij}v_{jk}. \qedhere
\]
\end{proof}

Let $I=S_1\sqcup S_2$ be a decomposition, $\ell_i\in\rL[S_i]$, $i=1,2$.
We define a map
\begin{equation}\label{e:pi}
\pi_{S_1,S_2}: \Un(I,\ell_1\cdot\ell_2) \to  \Un(S_1,\ell_1)\times  \Un(S_2,\ell_2)
\end{equation}
by
\[
U \mapsto (U_{S_1},U_{S_2}).
\]
Note that $S_1$ is an initial segment for $\ell_1\cdot\ell_2$ and $S_2$ is a final 
segment for $\ell_1\cdot\ell_2$. Thus $\pi_{S_1,S_2}$ is a morphism of groups
by Lemma~\ref{l:principal-unit}.

If $R\subseteq S\subseteq I$, then $(A_{S})_{R} = A_{R}$.
This implies the following commutativity, for any decomposition
 $I=S_1\sqcup S_2\sqcup S_3$ and $\ell_i\in\rL[S_i]$, $i=1,2,3$.
\begin{equation}\label{e:pi-asso}
\begin{gathered}
\xymatrix@C+15pt{
\Un(I,\ell_1\cdot\ell_2\cdot\ell_3) \ar[r]^-{\pi_{S_1\sqcup S_2,S_3}}
\ar[d]_-{\pi_{S_1,S_2\sqcup S_3}} & 
\Un(S_1\sqcup S_2,\ell_1\cdot\ell_2)\times\Un(S_3,\ell_3) \ar[d]^-{\pi_{S_1,S_2}\times\id} \\
\Un(S_1,\ell_1)\times\Un(S_2\sqcup S_3,\ell_2\cdot\ell_3) 
\ar[r]_-{\id\times\pi_{S_2,S_3}} &
\Un(S_1,\ell_1)\times\Un(S_2,\ell_2)\times\Un(S_3,\ell_3) 
}
\end{gathered}
\end{equation}

\subsection{Direct sums and principal minors}\label{ss:direct-principal}

The following key result relates the collection of morphisms $\sigma$ to
the collection $\pi$.

\begin{lemma}\label{l:direct-principal}
Fix two decompositions $I=S_1\sqcup S_2=T_1\sqcup T_2$
and let $A$, $B$, $C$ and $D$ be the resulting intersections, as in~\eqref{e:4sets}.
Let $\ell_i$ be a linear order on $S_i$, $i=1,2$, and $\ell=\ell_1\cdot\ell_2$.
Then the following diagram commutes.
\begin{equation}\label{e:direct-principal}
\begin{gathered}
\xymatrix@C+17pt{
\Un(T_1,\ell|_{T_1}) \times \Un(T_2,\ell|_{T_2}) \ar[d]_-{\sigma_{T_1,T_2}} \ar[r]^-{\pi_{A,C}\times\pi_{B,D}} & \Un(A,\ell_1|_{A})\times\Un(C,\ell_2|_{C})\times\Un(B,\ell_1|_{B})\times\Un(D,\ell_2|_{D}) \ar[dd]^-{\cong}\\
\Un(I,\ell) \ar[d]_-{\pi_{S_1,S_2}} & \\
\Un(S_1,\ell_1)\times\Un(S_2,\ell_2) 
 & \Un(A,\ell_1|_{A})\times\Un(B,\ell_1|_{B})\times\Un(C,\ell_2|_{C})\times\Un(D,\ell_2|_{D}) \ar[l]^-{\sigma_{A,B}\times\sigma_{C,D}}
}
\end{gathered}
\end{equation}
\end{lemma}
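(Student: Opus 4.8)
The plan is to verify the commutativity of~\eqref{e:direct-principal} by a direct element-level computation, tracing a general pair of matrices around both paths and checking that the four principal minors produced agree. Since all maps in sight ($\sigma$ via direct sum, $\pi$ via principal minors) are defined entrywise on matrices, and since composing them produces again an entrywise operation, it suffices to fix arbitrary $U\in\Un(T_1,\ell|_{T_1})$ and $V\in\Un(T_2,\ell|_{T_2})$ and compare the two outputs in $\Un(A,\ell_1|_A)\times\Un(B,\ell_1|_B)\times\Un(C,\ell_2|_C)\times\Un(D,\ell_2|_D)$.

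First I would follow the left-then-bottom path. The map $\sigma_{T_1,T_2}$ sends $(U,V)$ to the matrix $W=U\oplus V\in\Un(I,\ell)$, whose $(i,j)$-entry is $u_{ij}$ if $i,j\in T_1$, is $v_{ij}$ if $i,j\in T_2$, and is $0$ otherwise. Applying $\pi_{S_1,S_2}$ gives the pair $(W_{S_1},W_{S_2})$, and then the isomorphism relabels this as the quadruple $\bigl((W_{S_1})_A,(W_{S_1})_B,(W_{S_2})_C,(W_{S_2})_D\bigr)=(W_A,W_B,W_C,W_D)$, using $(W_S)_R=W_R$ for $R\subseteq S$. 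Here $A=S_1\cap T_1$, $B=S_1\cap T_2$, $C=S_2\cap T_1$, $D=S_2\cap T_2$. Now I would follow the top-then-right path: $\pi_{A,C}\times\pi_{B,D}$ sends $(U,V)$ to $(U_A,U_C,V_B,V_D)$ (note $A,C\subseteq T_1$ and $B,D\subseteq T_2$, so these principal minors make sense); the isomorphism reorders this to $(U_A,V_B,U_C,V_D)$; and $\sigma_{A,B}\times\sigma_{C,D}$ returns $(U_A\oplus V_B,\ U_C\oplus V_D)$ — wait, that is not yet the target; rather, I should read the bottom-right map as landing in $\Un(S_1,\ell_1)\times\Un(S_2,\ell_2)$ with first coordinate $\sigma_{A,B}(U_A,V_B)=U_A\oplus V_B$ and second coordinate $\sigma_{C,D}(U_C,V_D)=U_C\oplus V_D$. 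So the two paths agree on $\Un(S_1,\ell_1)\times\Un(S_2,\ell_2)$ iff $W_{S_1}=U_A\oplus V_B$ and $W_{S_2}=U_C\oplus V_D$.

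The core check is therefore the identity $W_{S_1}=U_A\oplus V_B$, and symmetrically for $S_2$. For $i,j\in S_1=A\sqcup B$ there are four cases. If $i,j\in A\subseteq T_1$, then $(W_{S_1})_{ij}=w_{ij}=u_{ij}=(U_A)_{ij}=(U_A\oplus V_B)_{ij}$. If $i,j\in B\subseteq T_2$, then $w_{ij}=v_{ij}=(V_B)_{ij}=(U_A\oplus V_B)_{ij}$. If $i\in A,j\in B$ (or vice versa), then $i$ and $j$ lie in different parts of the decomposition $T_1\sqcup T_2$, so $w_{ij}=0$, which matches the off-block zero in $U_A\oplus V_B$. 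Thus $W_{S_1}=U_A\oplus V_B$, and the same argument with $S_2=C\sqcup D$ gives $W_{S_2}=U_C\oplus V_D$. This establishes the commutativity. I expect the only real subtlety — and the step to state carefully rather than as an obstacle — is bookkeeping: confirming that each principal minor written above is taken of a matrix on a set containing the relevant index set (e.g.\ that $A\subseteq T_1$ so $U_A$ is legitimate, and that $A\subseteq S_1$ so $W_A=(W_{S_1})_A$), and that the unlabelled isomorphism "$\cong$" in~\eqref{e:direct-principal} is exactly the middle-factor interchange matching the one implicit in the statement. No genuinely hard point arises: everything reduces to the entrywise definitions of $\oplus$ and of principal minors, together with the transitivity $(A_S)_R=A_R$ already noted before~\eqref{e:pi-asso}.
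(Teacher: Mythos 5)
Your proof is correct and follows essentially the same route as the paper: both reduce the commutativity to the identity $(U\oplus V)_{S_1}=U_A\oplus V_B$ (and its analogue for $S_2$, $C$, $D$), which the paper simply asserts for arbitrary $U\in\Mat(T_1)$, $V\in\Mat(T_2)$ and you verify entrywise. The only point the paper makes explicitly that you only gesture at is that $\ell|_{T_1}=(\ell_1|_A)\cdot(\ell_2|_C)$ (and similarly for $T_2$), which is what guarantees $\pi_{A,C}$ and $\pi_{B,D}$ are indeed maps of the form~\eqref{e:pi} with the stated codomains; this is minor bookkeeping and does not affect correctness.
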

\begin{proof}
First note that since $\ell|_{T_1} = (\ell_1|_{A})\cdot(\ell_2|_{C})$, $\pi_{A,C}$ does
map as stated in the diagram, and similarly for $\pi_{B,D}$.
The commutativity of the diagram boils down to the simple fact that
\[
(U\oplus V)_{S_1} = U_A\oplus V_B
\]
(and a similar statement for $S_2$, $C$, and $D$).
This holds for any $U\in\Mat(T_1)$ and $V\in\Mat(T_2)$.
\end{proof}

\section{A Hopf monoid of (class) functions}\label{s:hopf-class}

We employ the operations of Section~\ref{s:unitriang}
(direct sum
of matrices and the passage from a matrix to its principal minors) 
to build a Hopf monoid structure on the
collection of function spaces on unitriangular matrices.
The collection of class function spaces gives rise to a Hopf submonoid.
With matrix entries in $\Fb_2$, the Hopf monoid of functions may be identified with the Hadamard product of the Hopf monoids of linear orders and of simple graphs.

\subsection{Functions}\label{ss:function-group}

Given a set $X$, let $\FC(X)$ denote the vector space of
functions on $X$ with values on the base field $\field$.
The functor
\[
\FC: \{\text{sets}\} \to \{\text{vector spaces}\}
\]
is contravariant. 
If at least one of two sets $X_1$ and $X_2$
is finite, then there is a canonical isomorphism
\begin{equation}\label{e:f-strong}
\FC(X_1\times X_2) \cong \FC(X_1)\otimes\FC(X_2).
\end{equation}
A function $f\in \FC(X_1\times X_2)$ corresponds to $\sum_i f^1_i\otimes f^2_i\in \FC(X_1)\otimes\FC(X_2)$ if and only if
\[
f(x_1,x_2) = \sum_i f^1_i(x_1)f^2_i(x_2)
\quad \forall\,x_1\in X_1,\, x_2\in X_2.
\]

Given an element $x\in X$, let $\kappa_x:X\to\field$ denote its characteristic function:
\begin{equation}\label{e:char-function}
\kappa_x(y) =
\begin{cases}
1 & \text{ if }y=x, \\
0         & \text{ if not.}
\end{cases}
\end{equation}
Suppose now that $X$ is finite. As $x$ runs over the elements of $X$, the functions $\kappa_x$
form a linear basis of $\FC(X)$.
If $\varphi:X\to X'$ is a function and $x'$ is an element of $X'$, then
\begin{equation}\label{e:function-functor}
\kappa_{x'}\circ \varphi = \sum_{\varphi(x)=x'} \kappa_x.
\end{equation}
Under~\eqref{e:f-strong},
\begin{equation}\label{e:char-strong}
\kappa_{(x_1,x_2)} \leftrightarrow \kappa_{x_1}\otimes\kappa_{x_2}.
\end{equation}

\subsection{Class functions on groups}\label{ss:class-group}

Given a group $G$, let $\CF(G)$ denote the vector space of
\emph{class functions} on $G$. These are the functions $f:G\to\field$ that
are constant on conjugacy classes of $G$. 
If $\varphi:G\to G'$ is a morphism of groups and $f$ is a class function on $G'$,
then $f\circ \varphi$ is a class function on $G$. In this manner,
\[
\CF: \{\text{groups}\} \to \{\text{vector spaces}\}
\]
is a contravariant functor. If at least one of two groups $G_1$ and $G_2$
is finite, then there is a canonical isomorphism
\begin{equation}\label{e:strong}
\CF(G_1\times G_2) \cong \CF(G_1)\otimes\CF(G_2)
\end{equation}
obtained by restriction from the isomorphism~\eqref{e:f-strong}.

Given a conjugacy class $C$ of $G$, let $\kappa_C:G\to\field$ denote its characteristic function:
\begin{equation}\label{e:char-class}
\kappa_C(x) =
\begin{cases}
1 & \text{ if }x\in C, \\
0         & \text{ if not.}
\end{cases}
\end{equation}
Suppose $G$ has finitely many conjugacy classes.
As $C$ runs over the conjugacy classes of $G$, the functions $\kappa_C$
form a linear basis of $\CF(G)$. If $C'$ is a conjugacy class of $G'$, then
\begin{equation}\label{e:class-functor}
\kappa_{C'}\circ \varphi = \sum_{\varphi(C)\subseteq C'} \kappa_C.
\end{equation}
The conjugacy classes of $G_1\times G_2$ are of the form $C_1\times C_2$
where $C_i$ is a conjugacy class of $G_i$, $i=1,2$. Under~\eqref{e:strong},
\begin{equation}\label{e:class-strong}
\kappa_{C_1\times C_2} \leftrightarrow \kappa_{C_1}\otimes\kappa_{C_2}.
\end{equation}

\subsection{Functions on unitriangular matrices}\label{ss:f-unit}

We assume for the rest of this section that the field $\Fb$ of matrix entries is finite. Thus, all groups
$\Un(I,\ell)$ of unitriangular matrices are finite.

We define a vector species $\FC(\Un)$ as follows. On a finite set $I$,
\[
\FC(\Un)[I] = \bigoplus_{\ell\in\rL[I]} \FC\bigl(\Un(I,\ell)\bigr).
\]
In other words, $\FC(\Un)[I]$ is the direct sum of the spaces of functions on
all unitriangular groups on $I$. A bijection $\sigma:I\cong J$ induces an isomorphism
$\Un(I,\ell)\cong\Un(J,\sigma\cdot\ell)$ and therefore an isomorphism $\FC(\Un)[I]\cong\FC(\Un)[J]$. Thus, $\FC(\Un)$ is a species.

Let $I=S_1\sqcup S_2$ and $\ell_i\in\rL[S_i]$, $i=1,2$.
Applying the functor $\FC$ to the morphism $\pi_{S_1,S_2}$ in~\eqref{e:pi}
and composing with the isomorphism in~\eqref{e:f-strong},
we obtain a linear map
\[
\FC\bigl(\Un(S_1,\ell_1)\bigr)\otimes \FC\bigl(\Un(S_2,\ell_2)\bigr)
\to \FC\bigl(\Un(I,\ell_1\cdot\ell_2)\bigr).
\]
Adding over all $\ell_1\in\rL[S_1]$ and $\ell_2\in\rL[S_2]$, we obtain a linear map
\begin{equation}\label{e:f-prod}
\mu_{S_1,S_2} : \FC(\Un)[S_1]\otimes \FC(\Un)[S_2] \to \FC(\Un)[I].
\end{equation}
Explicitly, given functions $f:\Un(S_1,\ell_1)\to\field$ and $g:\Un(S_2,\ell_2)\to\field$, 
\[
\mu_{S_1,S_2}(f\otimes g): \Un(I,\ell_1\cdot\ell_2)\to\field
\]
is the function given by
\begin{equation}\label{e:f-prod2}
U \mapsto f(U_{S_1})g(U_{S_2}).
\end{equation}

Similarly, from the map $\sigma_{S_1,S_2}$ in~\eqref{e:sigma} we obtain 
the components
\[
 \FC\bigl(\Un(I,\ell)\bigr) \to  \FC\bigl(\Un(S_1,\ell|_{S_1})\bigr) \otimes  \FC\bigl(\Un(S_2,\ell|_{S_2})\bigr)
\]
(one for each $\ell\in\rL[I]$) of a linear map
\begin{equation}\label{e:f-coprod}
\Delta_{S_1,S_2}:   \FC(\Un)[I] \to \FC(\Un)[S_1]\otimes \FC(\Un)[S_2].
\end{equation}
Explicitly, given a function $f:\Un(I,\ell)\to\field$, we have $\Delta_{S_1,S_2}(f)=\sum_i f^1_i\otimes f^2_i$ where 
\[
f^1_i:\Un(S_1,\ell|_{S_1})\to\field
\qand
f^2_i:\Un(S_2,\ell|_{S_2})\to\field
\]
are  functions such that
\begin{equation}\label{e:f-coprod2}
f(U_1\oplus U_2) = \sum_i f^1_i(U_1) f^2_i(U_2) \quad \text{for all $U_1\in\Un(S_1,\ell|_{S_1})$
and $U_2\in\Un(S_2,\ell|_{S_2})$.}
\end{equation}

\begin{proposition}\label{p:f-unit}
With the operations~\eqref{e:f-prod} and~\eqref{e:f-coprod},
the species $\FC(\Un)$ is a connected Hopf monoid. It is cocommutative.
\end{proposition}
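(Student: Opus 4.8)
The plan is to verify the Hopf monoid axioms (associativity, coassociativity, compatibility, and connectedness) for $\FC(\Un)$ by reducing each one, via the isomorphism~\eqref{e:f-strong} and the functoriality~\eqref{e:function-functor} of $\FC$, to the corresponding statement about the structure maps $\sigma_{S_1,S_2}$ and $\pi_{S_1,S_2}$ on unitriangular groups that were established in Section~\ref{s:unitriang}. Concretely, since $\FC$ is a contravariant functor carrying products of finite sets to tensor products, each diagram of group morphisms that commutes in Section~\ref{s:unitriang} yields, upon applying $\FC$, a commuting diagram of linear maps; one then sums over the relevant linear orders to pass from a fixed pair $(\ell_1,\ell_2)$ (or a fixed $\ell$) to the full species $\FC(\Un)$.

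First I would record the dictionary: the product $\mu_{S_1,S_2}$ is $\FC$ applied to $\pi_{S_1,S_2}$ (composed with~\eqref{e:f-strong}), summed over $\ell_1,\ell_2$; the coproduct $\Delta_{S_1,S_2}$ is $\FC$ applied to $\sigma_{S_1,S_2}$, summed over $\ell$. Then: (i) \emph{Associativity}~\eqref{e:assoc} follows by applying the contravariant functor $\FC$ to diagram~\eqref{e:pi-asso} and summing over $\ell_1,\ell_2,\ell_3$ — note $\FC$ reverses arrows, so the "associativity" square for $\pi$ becomes precisely the associativity square for $\mu$, using that $\ell=\ell_1\cdot\ell_2\cdot\ell_3$ restricts correctly. (ii) \emph{Coassociativity}~\eqref{e:coassoc} follows by applying $\FC$ to the associativity square~\eqref{e:sigma-asso} for $\sigma$ and summing over $\ell\in\rL[I]$; here one must check that the bookkeeping of linear orders matches, i.e.\ that $\ell|_{S_1\sqcup S_2}$ and then $(\ell|_{S_1\sqcup S_2})|_{S_1}=\ell|_{S_1}$, which is automatic. (iii) \emph{Compatibility}~\eqref{e:comp} is the heart of the matter and is exactly what Lemma~\ref{l:direct-principal} was designed for: applying $\FC$ to diagram~\eqref{e:direct-principal} and summing over the orders $\ell_1,\ell_2$ (with $\ell=\ell_1\cdot\ell_2$) turns that square into the compatibility square, once one checks the resulting identification of the four-fold tensor factors matches the "interchange the middle factors" map. (iv) \emph{Connectedness}: $\FC(\Un)[\emptyset]=\FC(\Un(\emptyset,\emptyset))=\FC(\{\Id\})=\field$, and the unit maps are the canonical identifications because $\sigma_{I,\emptyset}$ and $\pi_{I,\emptyset}$ are the obvious isomorphisms $\Un(I,\ell)\times\{\Id\}\cong\Un(I,\ell)$. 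Finally, \emph{cocommutativity}~\eqref{e:comm} follows from applying $\FC$ to~\eqref{e:sigma-comm}, which commutes because $A\oplus B=B\oplus A$ as matrices; noncommutativity of $\mu$ is expected since $\ell_1\cdot\ell_2\neq\ell_2\cdot\ell_1$ in general, so the product lands in different summands — this is consistent with $\bL$ being cocommutative but not commutative.

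The main obstacle I anticipate is purely organizational rather than mathematical: carefully tracking the direct-sum decomposition over linear orders through each diagram, and confirming that the symmetry isomorphisms (the "interchange" maps in~\eqref{e:comp} and~\eqref{e:comm}) correspond under~\eqref{e:f-strong} and~\eqref{e:char-strong} to the evident permutations of tensor factors. In particular, for compatibility one should double-check that the map labelled $\cong$ on the right of~\eqref{e:direct-principal} dualizes to the middle-factor interchange, and that summing the $\FC$-image of~\eqref{e:direct-principal} over all $(\ell_1,\ell_2)$ with $\ell=\ell_1\cdot\ell_2$ reproduces $\Delta_{T_1,T_2}\circ\mu_{S_1,S_2}$ with the correct indexing — the point being that every $\ell$ appearing on the $\Un(I,\ell)$ side arises uniquely as $\ell_1\cdot\ell_2$ for the given decomposition $I=S_1\sqcup S_2$. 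Once these identifications are in place, each axiom is a one-line consequence of the corresponding lemma in Section~\ref{s:unitriang}, and no new computation with matrices is needed. I would present the proof by treating associativity and coassociativity briefly, devoting the most attention to compatibility via Lemma~\ref{l:direct-principal}, and dispatching connectedness and cocommutativity in a sentence each.
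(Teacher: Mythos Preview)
Your proposal is correct and follows essentially the same approach as the paper: the paper's proof consists of the single sentence that axioms~\eqref{e:assoc}, \eqref{e:coassoc}, \eqref{e:comp}, and cocommutativity~\eqref{e:comm} follow by functoriality from diagrams~\eqref{e:sigma-asso}, \eqref{e:pi-asso}, \eqref{e:direct-principal}, and~\eqref{e:sigma-comm}, respectively. Your write-up is simply a more detailed elaboration of that same idea (and your pairing of $\mu$-associativity with~\eqref{e:pi-asso} and $\Delta$-coassociativity with~\eqref{e:sigma-asso} is the correct one, since $\mu$ is built from $\pi$ and $\Delta$ from $\sigma$; the paper's listing appears to have these two transposed).
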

\begin{proof}
Axioms~\eqref{e:assoc},~\eqref{e:coassoc} and~\eqref{e:comp}
follow from~\eqref{e:sigma-asso},~\eqref{e:pi-asso} and~\eqref{e:direct-principal}
by functoriality. In the same manner, cocommutativity~\eqref{e:comm}
follows from~\eqref{e:sigma-comm}.
\end{proof}

We describe the operations on the basis of characteristic functions~\eqref{e:char-function}. Let $U_i\in \Un(S_i,\ell_i)$, $i=1,2$.
It follows from~\eqref{e:function-functor} and~\eqref{e:char-strong},
or from~\eqref{e:f-prod2}, that the product is
\begin{equation}\label{e:char-f-prod}
\mu_{S_1,S_2}(\kappa_{U_1}\otimes \kappa_{U_2}) = \sum_{\pi_{S_1,S_2}(U)= (U_1, U_2)} \kappa_U
= \sum_{U_{S_1}=U_1,\,U_{S_2}=U_2} \kappa_U.
\end{equation}
Similarly, the coproduct is
\begin{equation}\label{e:char-f-coprod}
\Delta_{S_1,S_2}(\kappa_{U}) = \sum_{\sigma_{S_1,S_2}(U_1, U_2)=U} \kappa_{U_1}\otimes \kappa_{U_2}
= \begin{cases}
\kappa_{U_{S_1}}\otimes \kappa_{U_{S_2}} & \text{ if } U= U_{S_1} \oplus U_{S_2}, \\
   0      & \text{ otherwise.}
\end{cases}
\end{equation}

\subsection{Constant functions}\label{ss:constant}
Let $\one_\ell$ denote the constant function on $\Un(I,\ell)$ with value $1$.
Let $I=S_1\sqcup S_2$. It follows from~\eqref{e:f-prod2}
that
\[
\mu_{S_1,S_2}(\one_{\ell_1}\otimes\one_{\ell_2}) = \one_{\ell_1\cdot\ell_2}
\]
for any $\ell_1\in\rL[S_1]$ and $\ell_2\in\rL[S_2]$. Similarly,
we see from~\eqref{e:f-coprod2} that
\[
\Delta_{S_1,S_2}(\one_\ell) = \one_{\ell|_{S_1}}\otimes\one_{\ell|_{S_2}}
\]
for any $\ell\in\rL[I]$. We thus have:
\begin{corollary}\label{c:fromL}
The collection of maps
\[
\bL[I] \to \FC(\Un)[I], \quad \ell\mapsto \one_\ell
\]
is an injective morphism of Hopf monoids. 
\end{corollary}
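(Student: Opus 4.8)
The plan is to verify directly that the collection of maps $\bL[I]\to\FC(\Un)[I]$, $\ell\mapsto\one_\ell$, is (i) a morphism of species, (ii) compatible with the products, (iii) compatible with the coproducts, and (iv) injective. Once (ii) and (iii) are checked, being a morphism of connected Hopf monoids follows automatically (the unit and counit conditions are forced by connectedness). So the real content is the four verifications, and almost all of them are already done in the discussion preceding the corollary.

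First I would observe that the map is a morphism of species: a bijection $\sigma\colon I\cong J$ sends $\ell$ to $\sigma\cdot\ell$ and carries $\Un(I,\ell)$ isomorphically onto $\Un(J,\sigma\cdot\ell)$, hence sends the constant function $\one_\ell$ to the constant function $\one_{\sigma\cdot\ell}$; this matches the $\bh$-action on $\bL$, which permutes the basis $\rL[I]$ accordingly. Next, compatibility with products is exactly the displayed identity $\mu_{S_1,S_2}(\one_{\ell_1}\otimes\one_{\ell_2})=\one_{\ell_1\cdot\ell_2}$ established from~\eqref{e:f-prod2}, combined with the fact that the product on $\bL$ sends the basis element $(\ell_1,\ell_2)$ to the basis element $\ell_1\cdot\ell_2$; since both sides are linear and agree on the basis $\rL[S_1]\times\rL[S_2]$, the two squares (apply the map then multiply in $\FC(\Un)$, versus multiply in $\bL$ then apply the map) commute. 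Compatibility with coproducts is the same argument using the displayed identity $\Delta_{S_1,S_2}(\one_\ell)=\one_{\ell|_{S_1}}\otimes\one_{\ell|_{S_2}}$ from~\eqref{e:f-coprod2} and the description of $\Delta$ on $\bL$ via restriction, $\ell\mapsto(\ell|_{S_1},\ell|_{S_2})$; note in particular that, unlike the set-partition or graph cases, there is no vanishing case here because $\one_\ell$ is never decomposed to zero. Finally, injectivity is immediate: for distinct linear orders $\ell\neq\ell'$ on $I$, the groups $\Un(I,\ell)$ and $\Un(I,\ell')$ are distinct summands of $\FC(\Un)[I]$, so $\one_\ell$ and $\one_{\ell'}$ lie in different (independent) summands; more generally the images of the basis vectors $\ell\in\rL[I]$ are the nonzero constant functions on the distinct summands, hence linearly independent, so the map is injective.

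There is no serious obstacle; the statement is essentially a bookkeeping consequence of the two constant-function computations already recorded. The only point that deserves a word of care is making the passage from ``agreement on basis elements'' to ``commuting diagrams of linear maps'' explicit, i.e.\ checking that each relevant square in the definition of a Hopf monoid morphism has all four arrows linear so that it suffices to test on the spanning set $\rL[S_1]\times\rL[S_2]$ (resp.\ $\rL[I]$); this is routine. I would therefore write the proof as a short paragraph citing~\eqref{e:f-prod2} and~\eqref{e:f-coprod2} (or equivalently Subsection~\ref{ss:constant}), noting the species-morphism property, and remarking on the injectivity via the direct-sum decomposition of $\FC(\Un)[I]$ over $\rL[I]$.
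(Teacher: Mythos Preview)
Your proposal is correct and follows essentially the same approach as the paper: the paper states the corollary immediately after displaying the identities $\mu_{S_1,S_2}(\one_{\ell_1}\otimes\one_{\ell_2})=\one_{\ell_1\cdot\ell_2}$ and $\Delta_{S_1,S_2}(\one_\ell)=\one_{\ell|_{S_1}}\otimes\one_{\ell|_{S_2}}$ and treats it as an immediate consequence, without spelling out the species-morphism or injectivity verifications that you make explicit.
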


\subsection{Class functions on unitriangular matrices}\label{ss:class-unit}

Let $\CF(\Un)[I]$ be the direct sum of the spaces of class functions on
all unitriangular groups on $I$:
\[
\CF(\Un)[I] = \bigoplus_{\ell\in\rL[I]} \CF\bigl(\Un(I,\ell)\bigr).
\]
This defines a subspecies $\CF(\Un)$ of $\FC(\Un)$.

Proceeding in the same manner
as in Section~\ref{ss:f-unit}, we obtain linear maps
\[
\xymatrix@C+20pt{
\CF(\Un)[S_1]\otimes \CF(\Un)[S_2] \ar@<0.5ex>[r]^-{\mu_{S_1,S_2}} & 
\CF(\Un)[I] \ar@<0.5ex>[l]^-{\Delta_{S_1,S_2}} 
}
\]
by applying the functor $\CF$ to the morphisms $\pi_{S_1,S_2}$ and $\sigma_{S_1,S_2}$.
This is meaningful since the latter are 
morphisms of groups (in the case of $\pi_{S_1,S_2}$ by Lemma~\ref{l:principal-unit}).

\begin{proposition}\label{p:class-unit}
With these operations,
the species $\CF(\Un)$ is a connected cocommutative Hopf monoid. It is a Hopf submonoid of $\FC(\Un)$.
\end{proposition}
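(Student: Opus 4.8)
The plan is to reduce everything to Proposition~\ref{p:f-unit}. The species $\CF(\Un)$ is by definition a subspecies of $\FC(\Un)$, so the only two things to check are: (i) the product $\mu_{S_1,S_2}$ and coproduct $\Delta_{S_1,S_2}$ of $\FC(\Un)$ restrict to maps on the subspaces of class functions, i.e.\ $\CF(\Un)$ is closed under the structure maps; and (ii) connectedness, which is immediate since $\CF\bigl(\Un(\emptyset,\ell)\bigr)=\FC(\{\Id\})=\field$ and the unit identifications are inherited from $\FC(\Un)$. Once (i) is established, all the axioms~\eqref{e:assoc},~\eqref{e:coassoc},~\eqref{e:comp} and cocommutativity~\eqref{e:comm} hold automatically, because they already hold in $\FC(\Un)$ by Proposition~\ref{p:f-unit} and $\CF(\Un)[I]\hookrightarrow\FC(\Un)[I]$ is injective; a subspecies closed under operations inherited from a Hopf monoid, on which the axioms are identities between maps, is again a Hopf monoid, and the inclusion is a morphism.

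So the substance is (i), and this is exactly where the hypothesis that $\pi_{S_1,S_2}$ and $\sigma_{S_1,S_2}$ are \emph{morphisms of groups} (not just maps of sets) gets used. First I would recall the general principle, already implicit in Section~\ref{ss:class-group}: if $\varphi:G\to G'$ is a morphism of groups, then $f\mapsto f\circ\varphi$ carries $\CF(G')$ into $\CF(G)$, since $\varphi$ sends conjugate elements to conjugate elements. Apply this with $\varphi=\sigma_{S_1,S_2}:\Un(S_1,\ell|_{S_1})\times\Un(S_2,\ell|_{S_2})\to\Un(I,\ell)$: precomposition sends $\CF\bigl(\Un(I,\ell)\bigr)$ into $\CF\bigl(\Un(S_1,\ell|_{S_1})\times\Un(S_2,\ell|_{S_2})\bigr)$, which under the canonical isomorphism~\eqref{e:strong} is $\CF\bigl(\Un(S_1,\ell|_{S_1})\bigr)\otimes\CF\bigl(\Un(S_2,\ell|_{S_2})\bigr)$; summing over $\ell\in\rL[I]$ shows $\Delta_{S_1,S_2}$ restricts as claimed. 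Dually, apply the principle with $\varphi=\pi_{S_1,S_2}:\Un(I,\ell_1\cdot\ell_2)\to\Un(S_1,\ell_1)\times\Un(S_2,\ell_2)$, which is a morphism of groups by Lemma~\ref{l:principal-unit} since $S_1$ is an initial and $S_2$ a final $\ell_1\cdot\ell_2$-segment; precomposition sends $\CF\bigl(\Un(S_1,\ell_1)\bigr)\otimes\CF\bigl(\Un(S_2,\ell_2)\bigr)\cong\CF\bigl(\Un(S_1,\ell_1)\times\Un(S_2,\ell_2)\bigr)$ into $\CF\bigl(\Un(I,\ell_1\cdot\ell_2)\bigr)$, and summing over $\ell_1,\ell_2$ shows $\mu_{S_1,S_2}$ restricts.

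The main (and only real) obstacle is the one flagged above: making sure $\pi_{S_1,S_2}$ really is a homomorphism, which is precisely the content of Lemma~\ref{l:principal-unit} and the reason unitriangularity is needed — for a general subset $S$ the passage to the principal minor $U\mapsto U_S$ is not multiplicative, so $f\circ(U\mapsto U_S)$ need not be a class function. Here $S_1$ and $S_2$ are segments for the concatenation order, so the lemma applies and there is no obstruction. Everything else is bookkeeping. I would therefore write the proof as: observe $\CF(\Un)$ is a subspecies of $\FC(\Un)$; invoke the general fact that precomposition with a group morphism preserves class functions, applied to $\sigma_{S_1,S_2}$ and (via Lemma~\ref{l:principal-unit}) to $\pi_{S_1,S_2}$, to conclude the structure maps of $\FC(\Un)$ restrict to $\CF(\Un)$; note connectedness is inherited; and conclude that $\CF(\Un)$ is a connected cocommutative Hopf monoid and a Hopf submonoid of $\FC(\Un)$ by restriction of Proposition~\ref{p:f-unit}.
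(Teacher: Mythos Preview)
Your proposal is correct and matches the paper's argument in substance. The only organizational difference is that the paper defines the operations on $\CF(\Un)$ directly by applying the functor $\CF$ to $\pi_{S_1,S_2}$ and $\sigma_{S_1,S_2}$ (so the axioms follow by functoriality exactly as in Proposition~\ref{p:f-unit}) and then invokes naturality of the inclusion $\CF\hookrightarrow\FC$ together with its compatibility with~\eqref{e:f-strong} and~\eqref{e:strong} to get the Hopf submonoid statement; you instead show the $\FC(\Un)$ operations restrict to $\CF(\Un)$ and inherit the axioms from there. These are two sides of the same coin, and your emphasis on Lemma~\ref{l:principal-unit} as the one nontrivial input is exactly right.
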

\begin{proof}
As in the proof of Proposition~\ref{p:f-unit}, the first statement follows
by functoriality. The second follows from the naturality of the inclusion
of class functions and its compatibility with the isomorphisms in~\eqref{e:f-strong} and~\eqref{e:strong}.
\end{proof}

We describe the operations on the basis of characteristic functions~\eqref{e:char-class}. Let $C_i$ be a conjugacy class of $\Un(S_i,\ell_i)$, $i=1,2$.
It follows from~\eqref{e:class-functor} and~\eqref{e:class-strong} that the product is
\begin{equation}\label{e:char-class-prod}
\mu_{S_1,S_2}(\kappa_{C_1}\otimes \kappa_{C_2}) = \sum_{\pi_{S_1,S_2}(C)\subseteq C_1\times C_2} \kappa_C,
\end{equation}
where the sum is over  conjugacy classes $C$ in $\Un(I,\ell_1\cdot\ell_2)$.
Similarly, the coproduct is
\begin{equation}\label{e:char-class-coprod}
\Delta_{S_1,S_2}(\kappa_{C}) = \sum_{\sigma_{S_1,S_2}(C_1\times C_2)\subseteq C} \kappa_{C_1}\otimes \kappa_{C_2}.
\end{equation}
Here $C$ is a conjugacy class of $\Un(I,\ell)$ and the sum is over pairs of conjugacy
classes $C_i$ of $\Un(S_i,\ell|_{S_i})$.

\begin{remark}
Let 
\[
\Fc:\{\text{groups}\} \to \{\text{vector spaces}\}
\]
be a functor that is contravariant and \emph{bilax monoidal}, in the sense
of~\cite[Section~3.1]{AM:2010}.
The construction of the Hopf monoids $\FC(\Un)$ and $\CF(\Un)$ can be carried out for any
such functor $\Fc$ in place of $\CF$, in exactly the same manner.
It can also be carried out for a covariant bilax monoidal functor $\Fc$, in
a similar manner.
\end{remark}

\subsection{A combinatorial model}\label{ss:f-model}

To a unitriangular matrix $U\in \Un(I,\ell)$ we associate a graph $g(U)$ on $I$
as follows: there is an edge between $i$ and $j$ if $i<j$ in $\ell$ and $u_{ij}\neq 0$.
For example, given nonzero entries $a,b,c\in\Fb$,
\begin{equation}
\begin{gathered}
\ell=hijk,
\qquad
U=\pmat{1 & 0 & 0 & a \\
& 1 & b & c \\
& & 1 & 0\\
& & & 1}
\quad \Rightarrow \quad
g(U)=\xymatrix@R-25pt{
*{\bullet} \ar@{-}@/^3pc/[rrr] & 
*{\bullet} \ar@{-}@/^2pc/[rr]  \ar@{-}@/^1pc/[r] & 
*{\bullet} & *{\bullet} \\
 h & i & j & k
}.
\end{gathered}
\end{equation}

Recall the Hopf monoids $\bL$ and $\bG$ and the notion of Hadamard
product from Section~\ref{s:hopf}.
Let 
\[
\phi: \bL\times\bG \to \FC(\Un) 
\]
be the map with components 
\[
(\bL\times\bG)[I]  \to \FC(\Un)[I]
\]
given as follows. On a basis element
$\ell\otimes m_g \in \bL[I]\otimes \bG[I]  = (\bL\times\bG)[I]$, we set
\begin{equation}\label{e:f-model}
\phi(\ell\otimes m_g) = \sum_{U\in\Un(I,\ell):\, g(U)=g} \kappa_U \in \FC\bigl(\Un(I,\ell)\bigr) \subseteq \FC(\Un)[I],
\end{equation}
and extend by linearity.
The map relates the $m$-basis of $\bG$ to the basis of characteristic
functions~\eqref{e:char-function} of $\FC(\Un)$.

\begin{proposition}\label{p:f-model}
Let $\Fb$ be an arbitrary finite field.
The map $\phi: \bL\times\bG \to \FC(\Un)$ is an  injective morphism of Hopf monoids.
\end{proposition}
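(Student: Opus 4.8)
The plan is to verify, in order, that $\phi$ is a morphism of species, that it respects the product, that it respects the coproduct, and finally that it is injective; the compatibility axiom and counit conditions then come for free once we know $\phi$ is compatible with $\mu$ and $\Delta$ and that $\bL\times\bG$ and $\FC(\Un)$ are honest Hopf monoids. First I would observe that $\phi$ is a morphism of species: a bijection $\tau\colon I\cong J$ carries $\ell$ to $\tau\cdot\ell$, carries $m_g$ to $m_{\tau\cdot g}$, carries $\Un(I,\ell)$ to $\Un(J,\tau\cdot\ell)$, and the construction $U\mapsto g(U)$ is manifestly equivariant, so~\eqref{e:f-model} commutes with relabeling. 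Next, that $\phi$ lands in the correct component is built into~\eqref{e:f-model}, since $\phi(\ell\otimes m_g)$ is supported on $\Un(I,\ell)$ and the component of $\FC(\Un)[I]$ indexed by $\ell$ is $\FC(\Un(I,\ell))$.

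For multiplicativity, fix $I=S_1\sqcup S_2$, linear orders $\ell_i$ on $S_i$, graphs $g_i$ on $S_i$, and set $\ell=\ell_1\cdot\ell_2$. On the $\bL\times\bG$ side, the product takes $(\ell_1\otimes m_{g_1})\otimes(\ell_2\otimes m_{g_2})$ to $\ell\otimes\bigl(\sum_{g:\,g|_{S_1}=g_1,\ g|_{S_2}=g_2} m_g\bigr)$ by~\eqref{e:prod-g} and the definition of the Hadamard product, so $\phi$ of this is $\sum_{g}\ \sum_{U\in\Un(I,\ell):\,g(U)=g}\kappa_U$, the sum running over graphs $g$ on $I$ with $g|_{S_i}=g_i$. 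On the $\FC(\Un)$ side, $\mu_{S_1,S_2}(\phi(\ell_1\otimes m_{g_1})\otimes\phi(\ell_2\otimes m_{g_2}))$ equals $\sum_{U_1,U_2}\mu_{S_1,S_2}(\kappa_{U_1}\otimes\kappa_{U_2})=\sum_{U_1,U_2}\sum_{U:\,U_{S_1}=U_1,\,U_{S_2}=U_2}\kappa_U$ by~\eqref{e:char-f-prod}, where $U_i$ ranges over matrices in $\Un(S_i,\ell_i)$ with $g(U_i)=g_i$. The key combinatorial observation is that for $U\in\Un(I,\ell)$ the principal minors $U_{S_1}$ and $U_{S_2}$ determine, and are determined by, the edges of $g(U)$ lying within $S_1$ and within $S_2$ respectively, i.e. $g(U)|_{S_i}=g(U_{S_i})$; hence the two expressions enumerate the same set of characteristic functions $\kappa_U$. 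For the coproduct, fix $\ell\in\rL[I]$, write $\ell_i=\ell|_{S_i}$, and note that~\eqref{e:char-f-coprod} gives $\Delta_{S_1,S_2}(\kappa_U)=\kappa_{U_{S_1}}\otimes\kappa_{U_{S_2}}$ when $U=U_{S_1}\oplus U_{S_2}$ and $0$ otherwise; since $U=U_{S_1}\oplus U_{S_2}$ holds exactly when no entry $u_{ij}$ with $i\in S_1,\,j\in S_2$ (or $i\in S_2,\,j\in S_1$; but the latter are forced to vanish only if $S_1$ is an initial $\ell$-segment, so in general one must be careful and simply use the displayed condition $U=U_{S_1}\oplus U_{S_2}$) is nonzero, this says precisely that $g(U)$ has no edge between $S_1$ and $S_2$, i.e. $g(U)=g(U)|_{S_1}\sqcup g(U)|_{S_2}$; matching this against~\eqref{e:coprod-g} and the coproduct on the Hadamard product shows $\Delta_{S_1,S_2}\circ\phi=(\phi\otimes\phi)\circ\Delta_{S_1,S_2}$ by summing~\eqref{e:f-model} over $U$.

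Finally, injectivity: for a fixed $\ell$, the matrices $U\in\Un(I,\ell)$ are partitioned according to their associated graph $g(U)$, so the vectors $\{\phi(\ell\otimes m_g)\}_{g\in\rG[I]}$ are sums over disjoint nonempty subsets of the basis $\{\kappa_U\}_{U\in\Un(I,\ell)}$ and are therefore linearly independent; since distinct $\ell$ contribute to distinct direct summands $\FC(\Un(I,\ell))$ of $\FC(\Un)[I]$, the whole family $\{\phi(\ell\otimes m_g)\}$ is linearly independent, so $\phi$ is injective. (The only place the field $\Fb$ enters is that each fiber $\{U\in\Un(I,\ell):g(U)=g\}$ is nonempty for every $g$, which holds over any field, and indeed has $(|\Fb|-1)^{\#\text{edges}(g)}$ elements; this is why the statement holds for an arbitrary finite field and not just $\Fb_2$.) The main obstacle is purely bookkeeping in the coproduct step: one must check carefully that the condition ``$U=U_{S_1}\oplus U_{S_2}$'' translates to ``$g(U)$ has no $S_1$--$S_2$ edge'' — for a general decomposition $I=S_1\sqcup S_2$ with $\ell|_{S_1}\cdot\ell|_{S_2}\neq\ell$ there can be entries $u_{ij}$ with $i\in S_2$, $j\in S_1$ and $i<_\ell j$, which do correspond to edges of $g(U)$ between the two blocks, so the vanishing of \emph{all} cross entries (not merely the ``upper'' ones) is exactly what both sides demand, and the identification goes through.
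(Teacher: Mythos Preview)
Your argument is correct and, for the morphism part, follows the same route as the paper: both verifications rest on the identities $g(U)|_{S_i}=g(U_{S_i})$ (for the product) and the equivalence between $U=U_{S_1}\oplus U_{S_2}$ and ``$g(U)$ has no $S_1$--$S_2$ edge'' (for the coproduct), exactly as you spell out.

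The only noteworthy difference is in the injectivity step. You argue directly that the images $\phi(\ell\otimes m_g)$ have pairwise disjoint nonempty supports in the $\kappa$-basis, hence are linearly independent. The paper instead writes down a candidate left inverse $\psi\colon\FC(\Un)\to\bL\times\bG$, $\psi(\kappa_U)=\ell\otimes m_{g(U)}$, and computes $\psi\phi(\ell\otimes m_g)=(q-1)^{e(g)}\,\ell\otimes m_g$, which forces injectivity. Your argument is a shade more elementary; the paper's has the side benefit that $\psi$ is exactly $\phi^{-1}$ when $q=2$, which is what is used immediately afterward to obtain the isomorphism $\FC(\Un)\cong\bL\times\bG$ over $\Fb_2$. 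Either approach is fine here.
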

\begin{proof}
From the definition of the Hopf monoid operations on a Hadamard product
and formulas~\eqref{e:L} and~\eqref{e:prod-g} it follows that
\[
\mu_{S_1,S_2}\bigl((\ell_1\otimes m_{g_1})\otimes (\ell_2\otimes m_{g_2})\bigr)
= \sum_{g|_{S_1}=g_1,\,g|_{S_2}=g_2} \ell_1\cdot\ell_2 \otimes m_g.
\]
Comparing with formula~\eqref{e:char-f-prod} we see that products
are preserved, since given $U\in\Un(I,\ell)$, we have
\[
g(U_{S_i}) = g(U)|_{S_i}.
\]
The verification for coproducts is similar, employing~\eqref{e:L},~\eqref{e:coprod-g}
and~\eqref{e:char-f-coprod} and the fact that given $I=S_1\sqcup S_2$ and $U_i\in\Un(S_i,\ell|_{S_i})$, we have
\[
g(U_1\oplus U_2) = g(U_1)\sqcup g(U_2).
\]

Consider the map $\psi: \FC(\Un)\to \bL\times\bG$ given by
\begin{equation}\label{e:phi-inv}
\psi(\kappa_U)= \ell\otimes m_{g(U)}
\end{equation} 
for any $U\in\Un(I,\ell)$.
Then 
\[
\psi\phi(\ell\otimes m_g) = (q-1)^{e(g)}\,\ell\otimes m_g,
\]
where $q$ is the cardinality of $\Fb$ and $e(g)$ is the number of edges in $g$.
Thus $\phi$ is injective.
\end{proof}

We mention that the map $\psi$ in~\eqref{e:phi-inv}
is a morphism of comonoids, but not of monoids in general.

Assume now that the matrix entries are from $\Fb_2$, the field
with $2$ elements. In this case, the matrix $U$ is uniquely determined
by the linear order $\ell$ and the graph $g(U)$. Therefore, the map
$\phi$ is invertible, with inverse $\psi$.

\begin{corollary}\label{c:f-model}
There is an isomorphism of Hopf monoids
\[
\FC(\Un) \cong \bL\times\bG
\]
between the Hopf monoid of functions on unitriangular matrices with 
entries in $\Fb_2$ and the Hadamard product of the Hopf monoids of linear
orders and simple graphs.
\end{corollary}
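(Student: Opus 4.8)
The plan is to deduce Corollary~\ref{c:f-model} directly from Proposition~\ref{p:f-model} by checking that, over $\Fb_2$, the injective morphism $\phi$ is also surjective (equivalently, that the comonoid map $\psi$ of~\eqref{e:phi-inv} is a two-sided inverse to $\phi$). First I would observe that over $\Fb_2$ every nonzero entry equals $1$, so a matrix $U\in\Un(I,\ell)$ is completely determined by the set of positions $(i,j)$ with $i<j$ in $\ell$ and $u_{ij}=1$; this set of positions is exactly the edge set of the graph $g(U)$. Hence the assignment $U\mapsto (\ell, g(U))$ is a bijection between $\bigsqcup_{\ell\in\rL[I]}\Un(I,\ell)$ and $\{(\ell,g):\ell\in\rL[I],\ g\in\rG[I]\}$, with inverse sending a pair $(\ell,g)$ to the unique $\ell$-unitriangular $0/1$-matrix whose above-diagonal support is $g$. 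In particular the sum in~\eqref{e:f-model} has exactly one term, so $\phi(\ell\otimes m_g)=\kappa_{U}$ for that unique $U$, and $\psi(\kappa_U)=\ell\otimes m_{g(U)}=\ell\otimes m_g$.

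Next I would conclude that $\phi$ and $\psi$ are mutually inverse linear isomorphisms on each component $(\bL\times\bG)[I]\to\FC(\Un)[I]$: the computation $\psi\phi(\ell\otimes m_g)=(q-1)^{e(g)}\ell\otimes m_g$ from the proof of Proposition~\ref{p:f-model} gives, with $q=2$, $\psi\phi=\id$; and the bijection of the previous paragraph shows $\{\kappa_U\}$ is carried bijectively onto $\{\ell\otimes m_g\}$, so $\phi\psi=\id$ as well. Since $\phi$ is already a morphism of Hopf monoids by Proposition~\ref{p:f-model}, its inverse $\psi$ is automatically a morphism of Hopf monoids too, and therefore $\phi$ is an isomorphism of Hopf monoids $\bL\times\bG\cong\FC(\Un)$.

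There is essentially no obstacle here: everything reduces to the elementary bijection between $0/1$ unitriangular matrices and simple graphs (for a fixed linear order), which is exactly the $\Fb_2$ specialization of the graph $g(U)$ already introduced in Section~\ref{ss:f-model}. The only point requiring a word of care is that the isomorphism of species must be checked componentwise over all finite sets $I$ and must be equivariant under bijections $I\cong J$; both are immediate since $g(U)$ and the passage $U\mapsto(\ell,g(U))$ are manifestly natural in $I$. Thus the corollary follows formally from Proposition~\ref{p:f-model} together with the observation that $\phi$ is bijective when $\abs{\Fb}=2$.
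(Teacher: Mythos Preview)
Your proposal is correct and follows essentially the same approach as the paper: the paper simply observes that over $\Fb_2$ a matrix $U\in\Un(I,\ell)$ is uniquely determined by the pair $(\ell,g(U))$, whence $\phi$ is invertible with inverse $\psi$, and the corollary follows from Proposition~\ref{p:f-model}. Your write-up merely spells out this bijection and the identities $\psi\phi=\id$, $\phi\psi=\id$ in more detail.
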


On an arbitrary function $f:\Un(I,\ell)\to\field$, the isomorphism is given by
\[
\psi(f)=  \ell\otimes \sum_{U\in\Un(I,\ell)} f(U)\,m_{g(U)}.
\]
The coefficients of the $m$-basis elements are the values of $f$.

\section{A Hopf monoid of superclass functions}\label{s:hopf-super}

An abstract notion of \emph{superclass} (and \emph{supercharacter})
has been introduced
by Diaconis and Isaacs~\cite{DI:2008}.
We only need a minimal amount of related concepts that we review
in Sections~\ref{ss:super-group} and~\ref{ss:super-unit}.
For this purpose we first place ourselves in the setting of algebra groups.
In Section~\ref{ss:super-unit} we
construct a Hopf monoid structure on the collection of spaces
of superclass functions on the unitriangular groups, by the same
procedure as that in Section~\ref{s:hopf-class}. The combinatorics of these
superclasses is understood from Yan's thesis~\cite{Yan:2001} (reviewed in
slightly different terms in Section~\ref{ss:super-class}), and this allows us to obtain an explicit 
description for the Hopf monoid operations in Section~\ref{ss:super-prod}.
This leads to a theorem in Section~\ref{ss:super-structure} identifying the Hopf monoid of superclass functions with matrix entries
in $\Fb_2$ to the Hadamard product of the Hopf monoids of linear orders and set partitions. The combinatorial models for functions and for superclass functions
are related in Section~\ref{ss:rel-model}.

\subsection{Superclass functions on algebra groups}\label{ss:super-group}

Let $\Nf$ be a \emph{nilpotent} algebra: an associative, nonunital algebra in which every element
is nilpotent. 
Let $\bNf=\Fb\oplus\Nf$ denote the result of adjoining a unit to $\Nf$.
The set
\[
G(\Nf) = \{1+n \mid n\in\Nf\}
\]
is a subgroup of the group of invertible elements of $\bNf$.
A group of this form is called an \emph{algebra group}.
(This is the terminology employed in~\cite{DI:2008},
and in a slightly different context, in~\cite{And:1999} and~\cite{Isa:1995}.)

A morphism of nilpotent algebras $\varphi:\Mf\to\Nf$ has a unique unital extension
$\bMf\to\bNf$ and this sends $G(\Mf)$ to $G(\Nf)$. A \emph{morphism of algebra groups} is a map of this form. 

\begin{warning}
When we refer to the algebra group $G(\Nf)$, it is implicitly assumed that the
algebra $\Nf$ is given as well.
\end{warning}

Following Yan~\cite{Yan:2001}, we define an equivalence relation on $G(\Nf)$ as follows.
Given $x$ and $y\in G(\Nf)$, we write $x\sim y$ if there exist $g$ and $h\in G(\Nf)$ such that
\begin{equation}\label{e:super-rel}
y-1 = g(x-1)h.
\end{equation}
Following now Diaconis and Isaacs~\cite{DI:2008}, we refer to the equivalence classes of this relation as \emph{superclasses} and to the functions $G(\Nf) \to \field$ constant on
these classes as \emph{superclass functions}. The set of such functions is
denoted $\SC\bigl(G(\Nf)\bigr)$. 

Since
\[
gxg^{-1}-1 = g(x-1)g^{-1},
\]
we have that $x\sim gxg^{-1}$ for any $x$ and $g\in G(\Nf)$. Thus,
each superclass is a union of conjugacy classes, and hence every superclass function is a class function:
\begin{equation}\label{e:super-conj}
\SC\bigl(G(\Nf)\bigr) \subseteq \CF\bigl(G(\Nf)\bigr).
\end{equation}

A morphism $\varphi: G(\Mf)\to G(\Nf)$ of algebra groups
preserves the relation $\sim$. Therefore, if $f:G(\Nf) \to \field$
is a superclass function on $G(\Nf)$, then $f\circ \varphi$ is a superclass function
on $G(\Mf)$. In this manner,
\[
\SC: \{\text{algebra groups}\} \to \{\text{vector spaces}\}
\]
is a contravariant functor. In addition, the inclusion~\eqref{e:super-conj} is natural
with respect to morphisms of algebra groups.

The direct product of two algebra groups is another algebra group. Indeed,
\[
G(\Nf_1)\times G(\Nf_2) \cong G(\Nf_1\oplus\Nf_2)
\]
and $\Nf_1\oplus\Nf_2$ is nilpotent. Moreover,
\[
(x_1,x_2) \sim (y_1,y_2) \iff (x_1\sim y_1 \text{ and } x_2\sim y_2).
\]
Therefore, a superclass of the product is a pair of superclasses
from the factors, and if at least one of the two groups is finite, there is a canonical isomorphism
\[
\SC\bigl(G(\Nf_1)\times G(\Nf_2)\bigr) \cong 
\SC\bigl(G(\Nf_1)\bigr) \otimes \SC\bigl(G(\Nf_2)\bigr).
\]

\subsection{Superclass functions on unitriangular matrices}\label{ss:super-unit}

Given a finite set $I$ and a linear order $\ell$ on $I$, let $\Nf(I,\ell)$ denote
the subalgebra of $\Mat(I)$ consisting of strictly upper triangular matrices
\[
N=(n_{ij})_{i,j\in I},\ n_{ij}=0 \text{ whenever } i\geq_{\ell} j.
\]
Then $\Nf(I,\ell)$ is nilpotent and $G\bigl(\Nf(I,\ell)\bigr)=\Un(I,\ell)$.
Thus, the unitriangular groups are algebra groups.

We assume from now on that the field $\Fb$ is finite. 

We define, for each finite set $I$,
\[
\SC(\Un)[I] = \bigoplus_{\ell\in\rL[I]} \SC\bigl(\Un(I,\ell)\bigr).
\]
This defines a species $\SC(\Un)$. Proceeding in the same manner
as in Sections~\ref{ss:f-unit} and~\ref{ss:class-unit}, we obtain linear maps
\[
\xymatrix@C+20pt{
\SC(\Un)[S_1]\otimes \SC(\Un)[S_2] \ar@<0.5ex>[r]^-{\mu_{S_1,S_2}} & 
\SC(\Un)[I] \ar@<0.5ex>[l]^-{\Delta_{S_1,S_2}} 
}
\]
by applying the functor $\SC$ to the morphisms $\pi_{S_1,S_2}$ and $\sigma_{S_1,S_2}$.
This is meaningful since the latter are 
morphisms of algebra groups: it was noted in Section~\ref{ss:direct}
that $\sigma_{S_1,S_2}$ is the restriction of a morphism defined on the full
matrix algebras, while the considerations of Lemma~\ref{l:principal-unit}
show that $\pi_{S_1,S_2}$ is the restriction of a morphism defined on
the algebra of upper triangular matrices.

\begin{proposition}\label{p:super-unit}
With these operations,
the species $\SC(\Un)$ is a connected cocommutative Hopf monoid.
It is a Hopf submonoid of $\CF(\Un)$.
\end{proposition}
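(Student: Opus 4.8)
The plan is to mimic the proofs of Propositions~\ref{p:f-unit} and~\ref{p:class-unit} essentially verbatim, since the entire construction of $\SC(\Un)$ is an instance of applying a contravariant bilax monoidal functor to the diagrams of Section~\ref{s:unitriang}. Concretely, the product and coproduct on $\SC(\Un)$ are obtained by applying the functor $\SC$ to the group morphisms $\pi_{S_1,S_2}$ of~\eqref{e:pi} and $\sigma_{S_1,S_2}$ of~\eqref{e:sigma} and composing with the canonical isomorphism $\SC\bigl(G(\Nf_1)\times G(\Nf_2)\bigr)\cong\SC\bigl(G(\Nf_1)\bigr)\otimes\SC\bigl(G(\Nf_2)\bigr)$ recorded in Section~\ref{ss:super-group}. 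As noted in the paragraph preceding the statement, both $\pi_{S_1,S_2}$ and $\sigma_{S_1,S_2}$ are morphisms of algebra groups (the former because it extends to the algebra of upper triangular matrices, the latter because it extends to the full matrix algebra), so $\SC$ may legitimately be applied to them, exactly as $\CF$ was in Section~\ref{ss:class-unit}.

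The key steps are: (i) The associativity axiom~\eqref{e:assoc} for $\mu$ follows by applying $\SC$ contravariantly to the commuting diagram~\eqref{e:pi-asso} of group morphisms; likewise coassociativity~\eqref{e:coassoc} follows from~\eqref{e:sigma-asso}. (ii) The compatibility axiom~\eqref{e:comp} follows from the commuting diagram~\eqref{e:direct-principal} of Lemma~\ref{l:direct-principal}, again by functoriality of $\SC$ together with the fact that the canonical isomorphism for $\SC$ of a product is compatible with the one for products of four factors (the bilax monoidal structure). (iii) Connectedness holds because $\Un(\emptyset,\emptyset)$ is the trivial group, so $\SC$ of it is $\field$, and the unit maps $\mu_{I,\emptyset}$, $\Delta_{I,\emptyset}$ reduce to the canonical identifications since $\sigma_{I,\emptyset}$ and $\pi_{I,\emptyset}$ are identity maps. (iv) Cocommutativity~\eqref{e:comm} follows by applying $\SC$ to the commuting diagram~\eqref{e:sigma-comm}, which expresses that $A\oplus B$ and $B\oplus A$ are literally the same matrix.

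For the second assertion, that $\SC(\Un)$ is a Hopf \emph{sub}monoid of $\CF(\Un)$, I would invoke the inclusion~\eqref{e:super-conj}, $\SC\bigl(G(\Nf)\bigr)\subseteq\CF\bigl(G(\Nf)\bigr)$, which is natural with respect to morphisms of algebra groups and compatible with the canonical isomorphisms~\eqref{e:strong} on products (as stated in Section~\ref{ss:super-group}). Summing these inclusions over all $\ell\in\rL[I]$ yields an inclusion of species $\SC(\Un)\hookrightarrow\CF(\Un)$; naturality in $\pi_{S_1,S_2}$ and $\sigma_{S_1,S_2}$ together with compatibility with the product/tensor isomorphisms says precisely that this inclusion intertwines the respective $\mu$ and $\Delta$, hence is a morphism of Hopf monoids. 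This is the exact analogue of the second statement of Proposition~\ref{p:class-unit} and its one-line proof.

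I do not expect any genuine obstacle: the content has all been front-loaded into Lemmas~\ref{l:principal-unit} and~\ref{l:direct-principal} and into the functoriality and monoidality properties of $\SC$ established in Section~\ref{ss:super-group}. The only point requiring a word of care is that $\SC$, unlike $\FC$ or $\CF$, is only defined on algebra groups and their morphisms, so one must make sure $\pi_{S_1,S_2}$ and $\sigma_{S_1,S_2}$ qualify as morphisms of algebra groups rather than merely of abstract groups — but this has already been checked in the discussion immediately preceding the proposition. Thus the proof reads: the first statement follows by functoriality exactly as in Proposition~\ref{p:f-unit}, using~\eqref{e:sigma-asso},~\eqref{e:pi-asso},~\eqref{e:direct-principal} and~\eqref{e:sigma-comm}; the second follows from the naturality of the inclusion~\eqref{e:super-conj} and its compatibility with the isomorphisms of Section~\ref{ss:super-group}.
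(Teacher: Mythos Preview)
Your proposal is correct and follows exactly the same approach as the paper's own proof, which reads in its entirety: ``As in the proof of Proposition~\ref{p:f-unit}, the first statement follows by functoriality. The second follows from the naturality of the inclusion~\eqref{e:super-conj}.'' You have simply unpacked what ``by functoriality'' and ``by naturality'' mean in this context, correctly matching each Hopf monoid axiom to the corresponding commuting diagram from Section~\ref{s:unitriang} and correctly flagging the one genuine subtlety (that $\pi_{S_1,S_2}$ and $\sigma_{S_1,S_2}$ must be morphisms of algebra groups, not merely of groups).
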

\begin{proof}
As in the proof of Proposition~\ref{p:f-unit}, the first statement follows
by functoriality. The second follows from the naturality of the inclusion~\eqref{e:super-conj}.
\end{proof}

Formulas~\eqref{e:char-class-prod} and~\eqref{e:char-class-coprod} continue to hold
for the (co)product of superclass functions. 

\smallskip

The constant function $\one_\ell$ is a superclass function. Thus the morphism of
Hopf monoids of Corollary~\ref{c:fromL} factors through $\SC(\Un)$ and $\CF(\Un)$:
\[
\bL \into \SC(\Un) \into \CF(\Un) \into \FC(\Un).
\]

\subsection{Combinatorics of the superclasses}\label{ss:super-class}

Yan~\cite{Yan:2001} showed that superclasses are parametrized by certain combinatorial
data, essentially along the lines presented below.

According to~\eqref{e:super-rel}, two unitriangular matrices $U_1$ and $U_2$ are
in the same superclass if and only if $U_2-\Id$ is obtained from $U_1-\Id$ by a sequence
of elementary row and column operations. The available operations are from the
unitriangular group itself, so the pivot entries cannot be normalized.
Thus, each superclass contains a unique matrix $U$
such that $U-\Id$ has at most one nonzero
entry in each row and each column. We refer to this matrix $U$ as
the \emph{canonical representative} of the superclass.

We proceed to encode such representatives in terms of combinatorial data.

We first discuss the combinatorial data.
Let $\ell$ be a linear order on a finite set $I$ and $X$ a partition of $I$.
Let us say that $i$ and $j\in I$ \emph{bound an arc} if:
\begin{itemize}
\item $i$ precedes $j$ in $\ell$;
\item $i$ and $j$ are in the same block of $X$, say $S$;
\item no other element of $S$ lies between $i$ and $j$ in the order $\ell$.
\end{itemize}
The set of \emph{arcs} is
\[
A(X,\ell):=\{(i,j)  \mid \text{ $i$ and $j$ bound an arc}\}.
\]
Consider also a function 
\[
\alpha: A(X,\ell) \to \Fb^{\times}
\]
from the set of arcs to the nonzero elements of $\Fb$. 
We say that the pair $(X,\alpha)$ is an \emph{arc diagram} on the linearly ordered set $(I,\ell)$.
We may visualize an arc diagram as follows.
\[
\xymatrix@R-25pt{
*{\bullet} \ar@{-}@/^2pc/[rrr]^-{a} & *{\bullet} &
*{\bullet} \ar@{-}@/^2pc/[rrr]^-{c} & *{\bullet} \ar@{-}@/^1pc/[r]^-{b}
& *{\bullet} & *{\bullet} \\
f & g & h & i & j & k
}
\]
Here the combinatorial data is
\[
\ell=fghijk,\quad X=\bigl\{ \{f,i,j\},\{g\},\{h,k\} \bigr\},
\quad \alpha(f,i)=a,\ \alpha(i,j)=b,\ \alpha(h,k)=c.
\]

Fix the linear order $\ell$. To an arc diagram $(X,\alpha)$ on $(I,\ell)$
we associate a matrix $U_{X,\alpha}$ with entries
\[
u_{ij} = 
\begin{cases}
\alpha(i,j) & \text{ if } (i,j)\in A(X,\ell), \\
 1        & \text{ if } i=j,\\
 0 & \text{ otherwise.}
\end{cases}
\]
Clearly, the matrix $U_{X,\alpha}$ is $\ell$-unitriangular and 
$U_{X,\alpha}-\Id$ has at most one nonzero
entry in each row and each column. In the above example,
\[
U_{X,\alpha} = 
\pmat{1 & 0 & 0 & a & 0 & 0\\
& 1 & 0 & 0 & 0 & 0\\
& & 1 & 0 & 0 & c\\
& & & 1 & d & 0 \\
& & & & 1 & 0 \\
& & & & & 1
}.
\]

Conversely, any canonical representative matrix $U\in\Un(I,\ell)$ is of the form $U_{X,\alpha}$ for a unique arc diagram $(X,\alpha)$ on $(I,\ell)$: the location of the nonzero
entries determines the set of arcs and the values of the entries determine
the function $\alpha$. The smallest equivalence relation on $I$ containing
the set of arcs determines the partition $X$. 

In conclusion, the canonical representatives, and hence the superclasses,
are in bijection with the set of arc diagrams. We let $C_{X,\alpha}$
denote the superclass of $\Un(I,\ell)$ containing $U_{X,\alpha}$ and we write
$\kappa_{X,\alpha}$ for the characteristic function of this class.
As $(X,\alpha)$ runs over all arc diagrams on $(I,\ell)$, these functions
form a basis of the space $\SC\bigl(\Un(I,\ell)\bigr)$.

We describe principal minors and direct sums of the canonical representatives.
To this end, fix $\ell\in\rL[I]$, and recall the notions of union and restriction of set partitions discussed
in Section~\ref{ss:partition}.

Let $S\subseteq I$ be an arbitrary subset. Given a partition $X$
of $I$, let $A(X,\ell)|_S$ denote the subset of $A(X,\ell)$
consisting of those arcs $(i,j)$ where both $i$ and $j$ belong to $S$.
We let $\alpha|_S$ denote the restriction of $\alpha$ to $A(X,\ell)|_S$.
We have $A(X,\ell)|_S \subseteq A(X|_S,\ell|_S)$, and if $S$ is an $\ell$-segment, then
\begin{equation}\label{e:arc-restriction}
A(X,\ell)|_S = A(X|_S,\ell|_S).
\end{equation}
In this case, we obtain an arc diagram $(X|_S,\alpha|_S)$ on $(S,\ell|_S)$, and
we have
\begin{equation}\label{e:rep-principal}
(U_{X,\alpha})_S = U_{X|_S,\alpha|_S}.
\end{equation}

Suppose now that $I=S_1\sqcup S_2$ and $(X_i,\alpha_i)$ 
is an arc diagram on $(S_i,\ell|_{S_i})$, $i=1,2$. Then
\begin{equation}\label{e:arc-union}
A(X_1\sqcup X_2,\ell) = A(X_1,\ell|_{S_1})\sqcup A(X_2,\ell|_{S_2}).
\end{equation}
Let $\alpha_1\sqcup\alpha_2$ denote the common extension of $\alpha_1$
and $\alpha_2$ to this set. Then $(X_1\sqcup X_2,\alpha_1\sqcup\alpha_2)$
is an arc diagram on $(I,\ell)$ and we have
\begin{equation}\label{e:rep-direct}
U_{X_1,\alpha_1}\oplus U_{X_2,\alpha_2} = U_{X_1\sqcup X_2,\alpha_1\sqcup\alpha_2}.
\end{equation}

\subsection{Combinatorics of the (co)product}\label{ss:super-prod}

We now describe the product and coproduct of the Hopf monoid $\SC(\Un)$
on the basis $\{\kappa_{X,\alpha}\}$ of Section~\ref{ss:super-class}.
We employ formulas~\eqref{e:char-class-prod} and~\eqref{e:char-class-coprod}
which as discussed in Section~\ref{ss:super-unit} hold for superclass functions.

Let $I=S_1\sqcup S_2$ and $\ell_i\in\rL[S_i]$, $i=1,2$, and consider
the product
\[
\SC\bigl(\Un(S_1,\ell_1)\bigr)\times \SC\bigl(\Un(S_2,\ell_2)\bigr)
\to \SC\bigl(\Un(I,\ell_1\cdot\ell_2)\bigr).
\]
Let $(X_i,\alpha_i)$ be an arc diagram on $(I,\ell_i)$, $i=1,2$.
According to~\eqref{e:char-class-prod}
we have
\[
\mu_{S_1,S_2}(\kappa_{X_1,\alpha_1}\otimes \kappa_{X_2,\alpha_2}) = \sum_{\pi_{S_1,S_2}(C_{X,\alpha})\subseteq C_{X_1,\alpha_1}\times C_{X_2,\alpha_2}} \kappa_{X,\alpha},
\]
a sum over arc diagrams $(X,\alpha)$ on $(I,\ell_1\cdot\ell_2)$.
Since $\pi_{S,T}$ preserves superclasses, 
\begin{align*}
\pi_{S_1,S_2}(C_{X,\alpha})\subseteq C_{X_1,\alpha_1}\times C_{X_2,\alpha_2}
& \iff
\pi_{S_1,S_2}(U_{X,\alpha})\in C_{X_1,\alpha_1}\times C_{X_2,\alpha_2}\\
& \iff (U_{X,\alpha})_{S_i} \in C_{X_i,\alpha_i},\ i=1,2.
\end{align*}
In view of~\eqref{e:rep-principal}, this is in turn equivalent to
\[
X|_{S_i} = X_i \qand \alpha|_{S_i}, = \alpha_i, \quad i=1,2.
\]
In conclusion,
\begin{equation}\label{e:char-class-prod2}
\mu_{S_1,S_2}(\kappa_{X_1,\alpha_1}\otimes \kappa_{X_2,\alpha_2}) = 
\sum_{X|_{S_i} = X_i,\, \alpha|_{S_i} = \alpha_i} 
\kappa_{X,\alpha}.
\end{equation}
The sum is over all arc diagrams $(X,\alpha)$ on $(I,\ell_1\cdot\ell_2)$ whose restriction to $S_i$
is $(X_i,\alpha_i)$ for $i=1,2$.

Take now $\ell\in\rL[I]$, $I=S_1\sqcup S_2$, and consider the coproduct
\[
 \SC\bigl(\Un(I,\ell)\bigr) \to  \SC\bigl(\Un(S_1,\ell|_{S_1})\bigr) \times  \SC\bigl(\Un(S_2,\ell|_{S_2})\bigr).
\]
Let $(X,\alpha)$ be an arc diagram on $(I,\ell)$. According to~\eqref{e:char-class-coprod} we have
\[
\Delta_{S_1,S_2}(\kappa_{X,\alpha}) = \sum_{\sigma_{S_1,S_2}(C_{X_1,\alpha_1}\times C_{X_2,\alpha_2})\subseteq C_{X,\alpha}} \kappa_{X_1,\alpha_1}\otimes \kappa_{X_2,\alpha_2},
\]
a sum over arc diagrams $(X_i,\alpha_i)$ on $(S_i,\ell|_{S_i})$.
The superclass $C_{X_1,\alpha_1}\times C_{X_2,\alpha_2}$ contains
$
(U_{X_1,\alpha_1},\,U_{X_2,\alpha_2})
$
and hence its image under $\sigma_{S_1,S_2}$ contains
\[
U_{X_1,\alpha_1}\oplus U_{X_2,\alpha_2} = U_{X_1\sqcup X_2,\alpha_1\sqcup\alpha_2},
\]
by~\eqref{e:rep-direct}. Therefore,
\[
\sigma_{S_1,S_2}(C_{X_1,\alpha_1}\times C_{X_2,\alpha_2})\subseteq C_{X,\alpha}
\iff
X_1\sqcup X_2 = X \qand \alpha_1\sqcup\alpha_2 = \alpha.
\]
Note that $X_1\sqcup X_2 = X$ if and only if $S_1$ (or equivalently, $S_2$)
is a union of blocks of $X$. In this case, $X_i=X|_{S_i}$ and
$\alpha_i=\alpha|_{S_i}$.
In conclusion,
\begin{equation}\label{e:char-class-coprod2}
\Delta_{S_1,S_2}(\kappa_{X,\alpha}) =
\begin{cases}
\kappa_{X|_{S_1},\alpha|_{S_1}} \otimes \kappa_{X|_{S_2},\alpha|_{S_2}}  & \text{if $S_1$ is the union of some blocks of $X$,} \\
0 & \text{otherwise.}
\end{cases}
\end{equation}

\subsection{Decomposition as a Hadamard product}\label{ss:super-structure}

The apparent similarity between the combinatorial description of the
Hopf monoid operations of $\SC(\Un)$ in Section~\ref{ss:super-prod}
and those of the Hopf monoids $\bL$ and $\bPi$ in Sections~\ref{ss:order}
and~\ref{ss:partition} can be formalized. Recall the Hadamard product
of Hopf monoids from Section~\ref{ss:hadamard}.

Let 
\[
\phi: \bL\times\bPi \to \SC(\Un) 
\]
be the map with components 
\[
(\bL\times\bPi)[I]  \to \SC(\Un)[I]
\]
given as follows. On a basis element
$\ell\otimes m_X \in \bL[I]\otimes \bPi[I]  = (\bL\times\bPi)[I]$, we set
\begin{equation}\label{e:s-model}
\phi(\ell\otimes m_X) = \sum_{\alpha:A(X,\ell)\to \Fb^{\times}} \kappa_{X,\alpha} \in \SC\bigl(\Un(I,\ell)\bigr) \subseteq \SC(\Un)[I],
\end{equation}
and extend by linearity. The morphism $\phi$ adds labels to the arcs 
in all possible ways.

\begin{proposition}\label{p:s-model}
Let $\Fb$ be an arbitrary finite field.
The map $\phi: \bL\times\bPi \to \SC(\Un)$ is an  injective morphism of Hopf monoids.
\end{proposition}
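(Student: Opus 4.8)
The plan is to verify directly that $\phi$ intertwines the Hopf monoid operations on $\bL\times\bPi$ with those on $\SC(\Un)$, and then to exhibit a left inverse to conclude injectivity. The computations will parallel the proof of Proposition~\ref{p:f-model} very closely, with set partitions in place of graphs and arc diagrams in place of matrices.

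First I would check compatibility with products. Fix $I=S_1\sqcup S_2$ and basis elements $\ell_1\otimes m_{X_1}$, $\ell_2\otimes m_{X_2}$. From the definition of the Hadamard product in Section~\ref{ss:hadamard}, together with the concatenation formula~\eqref{e:L} for $\bL$ and the product formula~\eqref{e:prod-m} for $\bPi$, one gets
\[
\mu_{S_1,S_2}\bigl((\ell_1\otimes m_{X_1})\otimes(\ell_2\otimes m_{X_2})\bigr)
= \sum_{X|_{S_1}=X_1,\ X|_{S_2}=X_2} \ell_1\cdot\ell_2\otimes m_X.
\]
Applying $\phi$ and using~\eqref{e:s-model} yields a sum of $\kappa_{X,\alpha}$ over all $X$ restricting correctly and all labellings $\alpha$ of $A(X,\ell_1\cdot\ell_2)$. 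On the other side, $\mu_{S_1,S_2}\bigl(\phi(\ell_1\otimes m_{X_1})\otimes\phi(\ell_2\otimes m_{X_2})\bigr)$ is, by~\eqref{e:char-class-prod2}, the sum of $\kappa_{X,\alpha}$ over all arc diagrams $(X,\alpha)$ on $(I,\ell_1\cdot\ell_2)$ with $(X|_{S_i},\alpha|_{S_i})=(X_i,\alpha_i)$. Here $S_1$ is $\ell$-initial and $S_2$ is $\ell$-final, so both are $\ell$-segments, and~\eqref{e:arc-restriction} guarantees that restricting arcs behaves well; the two expressions therefore agree. For coproducts I would run the analogous comparison: apply $\Delta_{S_1,S_2}$ to $\ell\otimes m_X$ using~\eqref{e:L} and~\eqref{e:coprod-m}, apply $\phi$, and match against the result of applying $\Delta_{S_1,S_2}$ after $\phi$, which is governed by~\eqref{e:char-class-coprod2}; both are zero unless $S_1$ is a union of blocks of $X$, and in that case both equal $\kappa_{X|_{S_1},\alpha|_{S_1}}\otimes\kappa_{X|_{S_2},\alpha|_{S_2}}$ summed over labellings, using~\eqref{e:arc-union}. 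Compatibility with units and counits is immediate since $\phi$ sends the empty arc diagram to the constant function.

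For injectivity I would define $\psi:\SC(\Un)\to\bL\times\bPi$ on the basis by $\psi(\kappa_{X,\alpha})=\ell\otimes m_X$ for $(X,\alpha)$ an arc diagram on $(I,\ell)$, and extend linearly. Then for a basis element $\ell\otimes m_X$,
\[
\psi\phi(\ell\otimes m_X) = \sum_{\alpha:A(X,\ell)\to\Fb^{\times}} \psi(\kappa_{X,\alpha}) = (q-1)^{a(X,\ell)}\,\ell\otimes m_X,
\]
where $q=\abs{\Fb}$ and $a(X,\ell)=\abs{A(X,\ell)}$ is the number of arcs. Since $q-1\neq 0$ in $\field$ (indeed $q-1\geq 1$, and we may assume $\field$ has characteristic not dividing... — more simply, $(q-1)^{a}$ is a nonzero scalar because $\phi$ is built the same way as in Proposition~\ref{p:f-model}, where the same constant appears), $\psi\phi$ is invertible, so $\phi$ is injective.

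The only genuinely delicate point is the product compatibility, and within it the claim that $A(X,\ell_1\cdot\ell_2)|_{S_i}=A(X|_{S_i},\ell|_{S_i})$ — this is exactly~\eqref{e:arc-restriction}, valid because $S_1$ and $S_2$ are $\ell$-segments for $\ell=\ell_1\cdot\ell_2$, so no extra bookkeeping is required; once this is in hand the matching of the two sums is purely formal. I expect no real obstacle: the proposition is essentially a dictionary translation of~\eqref{e:char-class-prod2} and~\eqref{e:char-class-coprod2} into the language of the Hadamard product $\bL\times\bPi$, and the argument is a near-verbatim adaptation of the graph case already treated.
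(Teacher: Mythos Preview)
Your proposal is correct and follows essentially the same approach as the paper: compare~\eqref{e:L},~\eqref{e:prod-m},~\eqref{e:coprod-m} against~\eqref{e:char-class-prod2},~\eqref{e:char-class-coprod2} exactly as in Proposition~\ref{p:f-model}, and use the analogous left inverse $\psi$ for injectivity. Your hesitation about whether $(q-1)^{a(X,\ell)}$ is nonzero in $\field$ is unnecessary, since the images $\phi(\ell\otimes m_X)$ for distinct $(\ell,X)$ are sums of disjoint nonempty sets of basis vectors $\kappa_{X,\alpha}$ and are therefore linearly independent regardless of the characteristic of $\field$.
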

\begin{proof}
This follows by comparing definitions, as in the proof of Proposition~\ref{p:f-model}.
The relevant formulas are~\eqref{e:L}, \eqref{e:prod-m} and~\eqref{e:coprod-m}
for the operations of $\bL\times\bPi$, and~\eqref{e:char-class-prod2} and~\eqref{e:char-class-coprod2} for the operations of $\SC(\Un)$.
\end{proof}

When the field of matrix entries is $\Fb_2$, the arc labels are uniquely determined.
The map $\phi$ is then invertible, with inverse $\psi$ given by
\[
\psi(\kappa_{X,\alpha}) = \ell\otimes m_X
\]
for any arc diagram $(X,\alpha)$ on a linearly ordered set $(I,\ell)$.
We thus have the following.

\begin{corollary}\label{c:super-structure}
There is an isomorphism of Hopf monoids
\[
\SC(\Un) \cong \bL\times\bPi
\]
between the Hopf monoid of superclass functions on unitriangular matrices with 
entries in $\Fb_2$ and the Hadamard product of the Hopf monoids of linear
orders and set partitions.
\end{corollary}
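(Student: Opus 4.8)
The plan is to obtain Corollary~\ref{c:super-structure} as a direct consequence of Proposition~\ref{p:s-model} together with the special feature of the field $\Fb_2$. Proposition~\ref{p:s-model} already provides an injective morphism of Hopf monoids $\phi:\bL\times\bPi\to\SC(\Un)$, valid over any finite $\Fb$, so the only thing left is to check that $\phi$ is surjective (equivalently bijective) when $\Fb=\Fb_2$, and that its set-theoretic inverse is the stated map $\psi$. Since $\phi$ is already known to be a morphism of Hopf monoids, a two-sided inverse of $\phi$ as a morphism of species is automatically a morphism of Hopf monoids, so no further compatibility needs to be verified.

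\textbf{Step 1: the nonzero arc labels are trivial over $\Fb_2$.} When $\Fb=\Fb_2$, one has $\Fb^{\times}=\{1\}$, so for each linearly ordered set $(I,\ell)$ and each partition $X$ of $I$ there is exactly one function $\alpha:A(X,\ell)\to\Fb^{\times}$, namely the constant function $1$. Consequently formula~\eqref{e:s-model} collapses to a single term:
\[
\phi(\ell\otimes m_X) = \kappa_{X,\alpha},
\]
where $\alpha\equiv 1$ is the unique labeling. Thus on each component $(\bL\times\bPi)[I]=\bigoplus_{\ell}\bPi[I]$, $\phi$ sends the basis element $\ell\otimes m_X$ to the basis element $\kappa_{X,\alpha}$ of $\SC(\Un(I,\ell))\subseteq\SC(\Un)[I]$.

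\textbf{Step 2: $\phi$ is a bijection on each component.} By the discussion in Section~\ref{ss:super-class}, as $(X,\alpha)$ ranges over all arc diagrams on $(I,\ell)$ the functions $\kappa_{X,\alpha}$ form a basis of $\SC(\Un(I,\ell))$; over $\Fb_2$ the arc diagrams on $(I,\ell)$ are in bijection with the partitions $X$ of $I$, via $X\mapsto(X,\mathbf 1)$. Hence $\phi$ restricts, for each $\ell\in\rL[I]$, to a bijection between the basis $\{m_X\}_{X\in\Pi[I]}$ of $\bPi[I]$ and the basis $\{\kappa_{X,\alpha}\}$ of $\SC(\Un(I,\ell))$. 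Summing over $\ell\in\rL[I]$, $\phi_I:(\bL\times\bPi)[I]\to\SC(\Un)[I]$ carries a basis bijectively to a basis, so it is a linear isomorphism, and its inverse is precisely $\psi$ defined by $\psi(\kappa_{X,\alpha})=\ell\otimes m_X$ for any arc diagram $(X,\alpha)$ on $(I,\ell)$. The maps $\psi_I$ commute with bijections $I\cong J$ since both bases do, so $\psi$ is a morphism of species.

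\textbf{Step 3: conclude.} Since $\phi$ is a morphism of Hopf monoids (Proposition~\ref{p:s-model}) that is invertible in the category of species, its inverse $\psi$ is automatically a morphism of Hopf monoids: applying $\psi$ on both sides of the product and coproduct identities for $\phi$ yields the corresponding identities for $\psi$. Therefore $\phi$ is an isomorphism of Hopf monoids $\bL\times\bPi\cong\SC(\Un)$ when the matrix entries lie in $\Fb_2$, which is the assertion of the corollary. I do not anticipate a genuine obstacle here; the only point requiring care is keeping the bookkeeping straight between the index $\ell$ (on which $\SC(\Un)[I]$ is graded) and the data $(X,\alpha)$, and observing that over a larger field $\phi$ genuinely fails to be surjective because $\phi(\ell\otimes m_X)$ is then a proper sum of several basis elements $\kappa_{X,\alpha}$ — so the restriction to $\Fb_2$ is essential.
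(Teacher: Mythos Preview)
Your proof is correct and follows essentially the same approach as the paper: over $\Fb_2$ the arc labels are uniquely determined, so the sum in~\eqref{e:s-model} collapses to a single term, $\phi$ becomes a bijection of bases with inverse $\psi$, and being an invertible morphism of Hopf monoids it is an isomorphism. The paper's argument is just a terser version of yours.
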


\subsection{Relating the combinatorial models}\label{ss:rel-model}

The results of Section~\ref{ss:super-structure} provide a combinatorial
model for the Hopf monoid $\SC(\Un)$. They 
parallel those of Section~\ref{ss:f-model} which do the same for $\FC(\Un)$.
We now interpret the inclusion $\SC(\Un)\into\FC(\Un)$ in these terms.

Let $X$ be a partition on a linearly ordered set $(I,\ell)$.
We may regard the set of arcs $A(X,\ell)$ as a simple graph on $I$.
Let $G(X,\ell)$ denote the set of simple graphs $g$ on $I$ such that:
\begin{itemize}
\item $g$ contains the graph $A(X,\ell)$;
\item if $i<j$ in $\ell$ and $g\setminus A(X,\ell)$ contains an edge between $i$ and $j$, then there exists $k$ such that
\[
\text{$i<k<j$ in $\ell$ and either $(i,k)\in A(X,\ell)$ or $(k,j)\in A(X,\ell)$.}
\]
\end{itemize}
The following illustrates the extra edges (dotted) that may be present 
in $g$ when an arc (solid) is present in $A(X,\ell)$.
\begin{equation*}
\begin{gathered}
\xymatrix@R+10pt{
\\
*{\ldots} & *{\bullet} \ar@{--}@/^3pc/[rrrr] & *{\ldots}  & *{\bullet}\ar@{-}@/^1.5pc/[rr] 
\ar@{--}@/^3pc/[rrrr] & 
*{\ldots} & *{\bullet}  & *{\ldots}  & *{\bullet} & *{\ldots} 
}
\end{gathered}
\end{equation*}

Define a map
\[
\bL\times\bPi \to \bL\times\bG
\]
with components 
\[
(\bL\times\bPi)[I]  \to (\bL\times\bG)[I],
\quad
\ell\otimes m_X \mapsto \ell\otimes \sum_{g\in G(X,\ell)} m_g.
\]

\begin{proposition}\label{p:rel-model}
The map $\bL\times\bPi \to \bL\times\bG$ is an injective morphism of Hopf monoids.
Moreover, the following diagram commutes.
\[
\xymatrix{
\bL\times\bPi \xyinc[r]  \ar[d]_{\phi}  & \bL\times\bG  \ar[d]^{\phi}  \\
\SC(\Un)  \xyinc[r]  & \FC(\Un)
}
\]
\end{proposition}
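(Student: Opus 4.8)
The plan is to verify the claim by unwinding the two morphisms $\phi$ and the combinatorial descriptions of the inclusions, reducing everything to an identity about the sets $G(X,\ell)$ of graphs and the arc sets $A(X,\ell)$. First I would check that $\bL\times\bPi \to \bL\times\bG$ is a morphism of Hopf monoids. Since the identity on $\bL$ is certainly a morphism, it suffices (by the definition of the Hadamard product operations in Section~\ref{ss:hadamard}) to check that the assignment $m_X \mapsto \sum_{g\in G(X,\ell)} m_g$ intertwines the operations~\eqref{e:prod-m},~\eqref{e:coprod-m} of $\bPi$ with~\eqref{e:prod-g},~\eqref{e:coprod-g} of $\bG$, for each fixed linear order; here the order $\ell$ is needed because $G(X,\ell)$ depends on it, and the compatibility with the $\bL$-component of the product (concatenation) and coproduct (restriction) is what makes this coherent. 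Concretely, for the product one must show that as $X$ ranges over partitions of $I$ with $X|_{S_i}=X_i$, the graphs in $\bigcup_X G(X,\ell_1\cdot\ell_2)$ are exactly the graphs $g$ on $I$ with $g|_{S_i}\in G(X_i,\ell_i)$; for the coproduct, that $G(X,\ell)$ restricted to a union-of-blocks subset $S_1$ matches $G(X|_{S_1},\ell|_{S_1})$ and that the ``no edge crosses'' condition on $\bG$ is compatible with $S_1$ being a union of blocks. Injectivity is immediate since the leading term (the sublattice minimum) $m_{A(X,\ell)}$ appears with coefficient $1$ and distinct $X$ give distinct arc graphs $A(X,\ell)$ together with the retained datum $\ell$.

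Next I would establish commutativity of the square. Both composites $\bL\times\bPi \to \FC(\Un)$ send $\ell\otimes m_X$ into $\FC(\Un(I,\ell))$, so it is enough to compare the two images there, expanded in the characteristic-function basis $\{\kappa_U\}$. Going down-then-right: $\phi(\ell\otimes m_X) = \sum_{\alpha:A(X,\ell)\to\Fb^\times}\kappa_{X,\alpha}$, and the inclusion $\SC(\Un)\into\FC(\Un)$ expresses each superclass characteristic function $\kappa_{C_{X,\alpha}}$ as $\sum_{U\in C_{X,\alpha}}\kappa_U$. Going right-then-down: $\ell\otimes m_X \mapsto \ell\otimes\sum_{g\in G(X,\ell)} m_g \mapsto \sum_{g\in G(X,\ell)}\sum_{U\in\Un(I,\ell):\,g(U)=g}\kappa_U$, using~\eqref{e:f-model}. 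So the identity to prove is
\[
\bigsqcup_{\alpha:A(X,\ell)\to\Fb^\times} C_{X,\alpha} \;=\; \bigl\{U\in\Un(I,\ell) : g(U)\in G(X,\ell)\bigr\},
\]
a disjoint union of superclasses on the left. This is the combinatorial heart of the matter: one must show that a unitriangular matrix $U$ lies in the superclass $C_{X,\alpha}$ for some labeling $\alpha$ of the arcs of $(X,\ell)$ precisely when its associated graph $g(U)$ lies in $G(X,\ell)$, i.e. $g(U)\supseteq A(X,\ell)$ and every ``extra'' edge $(i,j)$ of $g(U)$ is witnessed by an intermediate $k$ with $(i,k)$ or $(k,j)$ an arc. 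The forward direction follows from the description of superclasses via row/column operations in Section~\ref{ss:super-class}: starting from the canonical representative $U_{X,\alpha}$, applying unitriangular row and column operations can only create a nonzero $(i,j)$-entry when there is already a nonzero entry in row $i$ or column $j$ at some $k$ strictly between $i$ and $j$, which is exactly the $G(X,\ell)$ condition; moreover the partition underlying the arc set is preserved. The reverse direction requires showing that any such $U$ can be reduced, by unitriangular operations, to $U_{X,\alpha}$ where $\alpha$ records the pivot values on the arcs — this is essentially Yan's normal-form algorithm, clearing the non-arc entries using the arc entries as pivots.

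I expect this last identity — matching membership in the union of superclasses with the graph-theoretic condition $g(U)\in G(X,\ell)$ — to be the main obstacle, since it is the one genuinely new combinatorial assertion rather than a formal consequence of the Hadamard-product formalism. Everything else (that $\bL\times\bPi\to\bL\times\bG$ respects $\mu$ and $\Delta$, injectivity, the bookkeeping that both paths around the square land in the same summand $\FC(\Un(I,\ell))$) is a matter of carefully comparing the explicit formulas already derived:~\eqref{e:char-f-prod},~\eqref{e:char-f-coprod},~\eqref{e:char-class-prod2},~\eqref{e:char-class-coprod2},~\eqref{e:f-model},~\eqref{e:s-model}, together with the compatibilities $g(U_S)=g(U)|_S$ and $g(U_1\oplus U_2)=g(U_1)\sqcup g(U_2)$ used in the proof of Proposition~\ref{p:f-model}. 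A clean way to package the argument is to prove the set identity first as a standalone lemma (phrased purely in terms of $U\mapsto g(U)$ and arc diagrams, invoking the Section~\ref{ss:super-class} description of superclasses), then deduce both the commutativity of the square and, by projecting the square's commuting relation, the fact that $\bL\times\bPi\to\bL\times\bG$ is a morphism of Hopf monoids from the already-known morphism properties of the two vertical maps $\phi$ and the horizontal inclusion $\SC(\Un)\into\FC(\Un)$ — this last deduction works because the right-hand $\phi$ is injective, so a map that becomes a Hopf morphism after postcomposition with an injective Hopf morphism is itself one.
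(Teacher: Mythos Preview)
Your proposal is correct and in fact contains the paper's proof as a special case. The paper's argument is precisely the ``clean way'' you describe in your final paragraph: it observes that the three known maps in the square are injective Hopf morphisms, so once the diagram commutes the top map is forced to be an injective Hopf morphism as well, and then reduces commutativity to the set identity
\[
U\in C_{X,\alpha} \text{ for some } \alpha:A(X,\ell)\to\Fb^\times
\quad\iff\quad
g(U)\in G(X,\ell),
\]
which it justifies (somewhat tersely) by the observation that membership in the superclass $C_{X,\alpha}$ is characterized by the nonzero entries of $U-\Id$ lying above or to the right of those of $U_{X,\alpha}$.

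The only difference is one of economy: your main plan begins with a direct verification that $\bL\times\bPi\to\bL\times\bG$ preserves $\mu$ and $\Delta$ componentwise, and only afterwards suggests the shortcut. That direct verification is valid but unnecessary --- the paper dispenses with it entirely by the injectivity-of-$\phi$ trick you yourself note at the end. So your instinct in the final paragraph is the right one; you can safely discard the first half of the plan and lead with the set identity.
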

\begin{proof}
It is enough to prove the commutativity of the diagram, since all other maps
in the diagram are injective morphisms. The commutativity boils down to the following fact.
Given $X\in\Pi[I]$, $\ell\in\rL[I]$, and $U\in\Un(I,\ell)$,
\begin{center}
$U\in C_{X,\alpha}$ for some $\alpha:A(X,\ell)\to\Fb^{\times}$
$\iff$
$g(U) \in G(X,\ell)$.
\end{center}
This expresses the fact that a matrix $U$ belongs to the superclass $C_{X,\alpha}$ if and only if
the nonzero entries of $U-\Id$ are located either above or to the right
of the nonzero entries of the representative $U_{X,\alpha}$.
\end{proof}

\section{Freeness}\label{s:free}

We prove that the Hopf monoids $\FC(\Un)$ and $\SC(\Un)$
are free, and the Hopf structure is isomorphic to the canonical
one on a free monoid. We assume that the base field $\field$ is of characteristic $0$,
which enables us to apply the results of the appendix.

\subsection{A partial order on arc diagrams}\label{ss:partial}

Let $(I,\ell)$ be a linearly ordered set. Given arc diagrams $(X,\alpha)$
and $(Y,\beta)$ on $(I,\ell)$, we write
\[
(X,\alpha)\leq (Y,\beta)
\]
if
\[
A(X,\ell) \subseteq A(Y,\ell)
\qand
\beta|_{A(X,\ell)} = \alpha.
\]
In other words, every arc of $X$ is an arc of $Y$, and with the same label. In particular, the partition $Y$ is coarser than $X$. On the other hand, the
following arc diagrams are incomparable (regardless of the labels), even though
the partition on the right is the coarsest one.
\vspace*{5pt}
\[
\xymatrix@R-25pt{
\\
\\
*{\bullet} \ar@{-}@/^2pc/[rr] & *{\bullet}  & *{\bullet} \\
 i & j & k
}
\qquad\qquad
\xymatrix@R-25pt{
\\
\\
*{\bullet} \ar@{-}@/^2pc/[r] &
*{\bullet} \ar@{-}@/^2pc/[r] & 
*{\bullet} \\
i & j & k
}
\]

The poset of arc diagrams has a unique minimum (the partition into singletons,
for which there are no arcs), but several maximal elements. The arc
diagrams above are the two maximal elements when $\ell=ijk$
(up to a choice of labels).

A partition $X$ of the linearly ordered set $(I,\ell)$ is \emph{atomic} if no proper initial 
$\ell$-segment of $I$ is a union of blocks of $X$. Equivalently, there
is no decomposition $I=S_1\sqcup S_2$ into proper $\ell$-segments such that
$X=X|_{S_1}\sqcup X|_{S_2}$.  
\vspace*{15pt}
\[
\xymatrix@R-25pt{
*{\bullet} \ar@{-}@/^2pc/[rr] & *{\bullet} \ar@{-}@/^2pc/[rr] 
 & *{\bullet} & *{\bullet}  \\
 &  \ar@{}[r]_{\text{atomic}} & &
}
\qquad\qquad
\xymatrix@R-25pt{
*{\bullet} \ar@{-}@/^2pc/[r] & *{\bullet}   
& *{\bullet} \ar@{-}@/^2pc/[r]  & *{\bullet} \\
&  \ar@{}[r]_{\text{nonatomic}} & &
}
\]

If $(X,\alpha)$ is a maximal element of the poset of arc diagrams,
then $X$ is an atomic partition. But if $X$ is atomic, $(X,\alpha)$
need not be maximal (regardless of $\alpha$).
\vspace*{15pt}
\[
\xymatrix@R-25pt{
*{\bullet} \ar@{-}@/^2pc/[rr] & *{\bullet} \ar@{-}@/^2pc/[rr] 
 & *{\bullet} & *{\bullet}  \\
 &  \ar@{}[r]_{\text{maximal}} & & \\
  &  \ar@{}[r]_{(\Rightarrow \text{atomic})} & &
}
\qquad\qquad
\xymatrix@R-25pt{
*{\bullet} \ar@{-}@/^2pc/[rrr] & *{\bullet}   
& *{\bullet}  & *{\bullet} \\
&  \ar@{}[r]_{\text{atomic}} & & \\
  &  \ar@{}[r]_{\text{not maximal}} & &
}
\]

\subsection{A second basis for $\SC(\Un)$}\label{ss:lambda}

We employ the partial order of Section~\ref{ss:partial}
to define a new basis $\{\lambda_{X,\alpha}\}$
of $\SC\bigl(\Un(I,\ell)\bigr)$
by
\begin{equation}\label{e:lambda-basis}
\lambda_{X,\alpha} = \sum_{(X,\alpha)\leq (Y,\beta)} \kappa_{Y,\beta}.
\end{equation}

The product of the Hopf monoid
 $\SC(\Un)$ takes a simple form on the $\lambda$-basis.

\begin{proposition}\label{p:lambda-prod}
Let $I=S_1\sqcup S_2$ and $\ell_i\in \rL[S_i]$, $i=1,2$. Then
\begin{equation}\label{e:lambda-prod}
\mu_{S_1,S_2}(\lambda_{X_1,\alpha_1}\otimes\lambda_{X_2,\alpha_2})
= \lambda_{X_1\sqcup X_2,\alpha_1\sqcup\alpha_2}
\end{equation}
for any arc diagrams $(X_i,\alpha_i)$ on $(S_i,\ell_i)$, $i=1,2$.
\end{proposition}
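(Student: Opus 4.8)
The plan is to unwind both sides of~\eqref{e:lambda-prod} using the definition~\eqref{e:lambda-basis} of the $\lambda$-basis and the formula~\eqref{e:char-class-prod2} for the product on the $\kappa$-basis, and then match the resulting sums of $\kappa_{Y,\beta}$ term by term. Concretely, I would first expand the left-hand side:
\[
\mu_{S_1,S_2}(\lambda_{X_1,\alpha_1}\otimes\lambda_{X_2,\alpha_2})
= \sum_{(X_1,\alpha_1)\leq(Y_1,\beta_1)} \ \sum_{(X_2,\alpha_2)\leq(Y_2,\beta_2)} \mu_{S_1,S_2}(\kappa_{Y_1,\beta_1}\otimes\kappa_{Y_2,\beta_2}),
\]
and then apply~\eqref{e:char-class-prod2} to rewrite each inner product as a sum of $\kappa_{Z,\gamma}$ over arc diagrams $(Z,\gamma)$ on $(I,\ell_1\cdot\ell_2)$ with $Z|_{S_i}=Y_i$ and $\gamma|_{S_i}=\beta_i$. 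Expanding the right-hand side via~\eqref{e:lambda-basis} gives $\sum_{(X_1\sqcup X_2,\alpha_1\sqcup\alpha_2)\leq(Z,\gamma)}\kappa_{Z,\gamma}$. So the claim reduces to the combinatorial identity that the arc diagrams $(Z,\gamma)$ on $(I,\ell)$ (with $\ell=\ell_1\cdot\ell_2$) appearing on the left, counted with multiplicity, are exactly those with $(X_1\sqcup X_2,\alpha_1\sqcup\alpha_2)\leq(Z,\gamma)$, each with multiplicity one.

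For the multiplicity-one part, I would observe that given such a $(Z,\gamma)$, the data $(Y_i,\beta_i)$ that can produce it in the double sum are forced: from~\eqref{e:char-class-prod2} we must have $Y_i = Z|_{S_i}$ and $\beta_i=\gamma|_{S_i}$, so there is at most one contributing pair $((Y_1,\beta_1),(Y_2,\beta_2))$. For the index-set equality, the key geometric fact I would isolate is this: because $S_1$ is an initial $\ell$-segment and $S_2$ a final $\ell$-segment of $\ell=\ell_1\cdot\ell_2$, restriction behaves well, so $A(Z,\ell)|_{S_i}=A(Z|_{S_i},\ell|_{S_i})$ by~\eqref{e:arc-restriction}, and moreover \emph{no arc of $Z$ can connect $S_1$ to $S_2$} — if $(i,j)\in A(Z,\ell)$ with $i\in S_1$, $j\in S_2$, then $i$ and $j$ lie in the same block of $Z$ with nothing between them; but then restricting we would not recover a clean decomposition. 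Hence $A(Z,\ell) = A(Z|_{S_1},\ell|_{S_1})\sqcup A(Z|_{S_2},\ell|_{S_2})$, matching~\eqref{e:arc-union}. Using this, the condition "$(X_i,\alpha_i)\leq(Z|_{S_i},\gamma|_{S_i})$ for $i=1,2$" — i.e. $A(X_i,\ell|_{S_i})\subseteq A(Z|_{S_i},\ell|_{S_i})$ with matching labels — is equivalent to $A(X_1\sqcup X_2,\ell)\subseteq A(Z,\ell)$ with matching labels, which is precisely $(X_1\sqcup X_2,\alpha_1\sqcup\alpha_2)\leq(Z,\gamma)$.

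The main obstacle I anticipate is the claim that arcs of $Z$ never straddle the cut $S_1\sqcup S_2$; one has to be careful because arcs are determined by the partition $Z$ together with the order $\ell$, not by a graph structure, so I would need to argue directly from the three bullet-point conditions defining "bound an arc" in Section~\ref{ss:super-class}, using that $S_1$ is an initial segment. I would also need to double-check the edge cases where $S_1$ or $S_2$ is empty (handled by connectedness) and that $\alpha_1\sqcup\alpha_2$ is well-defined, which follows from~\eqref{e:arc-union}. Everything else is bookkeeping with the two expansions. An alternative, perhaps cleaner, route would be to invoke the isomorphism $\phi\colon\bL\times\bPi\to\SC(\Un)$ of Proposition~\ref{p:s-model} after first establishing the analogous statement for a $\lambda$-type basis of $\bPi$; but since $\phi$ is only an isomorphism over $\Fb_2$, the direct computation above is the more robust choice and is what I would present.
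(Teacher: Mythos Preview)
Your overall strategy is exactly the paper's: expand both sides via~\eqref{e:lambda-basis} and~\eqref{e:char-class-prod2}, then identify the index sets using~\eqref{e:arc-restriction} and~\eqref{e:arc-union}. The multiplicity-one observation is correct and handled implicitly in the paper by collapsing the double sum over $(Y_1,\beta_1),(Y_2,\beta_2)$ directly into a single sum over $(Y,\beta)$ with $(X_i,\alpha_i)\leq (Y|_{S_i},\beta|_{S_i})$.

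There is one genuine error, though it is recoverable. Your claim that ``no arc of $Z$ can connect $S_1$ to $S_2$'' is false, and hence so is the asserted decomposition $A(Z,\ell) = A(Z|_{S_1},\ell|_{S_1})\sqcup A(Z|_{S_2},\ell|_{S_2})$. For a minimal counterexample take $I=\{1,2\}$, $\ell=12$, $S_1=\{1\}$, $S_2=\{2\}$, and $Z=\bigl\{\{1,2\}\bigr\}$: then $(1,2)\in A(Z,\ell)$ crosses the cut, while both $A(Z|_{S_i},\ell|_{S_i})$ are empty. You correctly flagged this step as the main obstacle; the obstacle is real because the statement is simply wrong.

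Fortunately the equivalence you need does not rely on it. What must be shown is
\[
(X_i,\alpha_i)\leq (Z|_{S_i},\gamma|_{S_i}) \text{ for } i=1,2
\quad\iff\quad
(X_1\sqcup X_2,\alpha_1\sqcup\alpha_2)\leq (Z,\gamma),
\]
and this follows directly from~\eqref{e:arc-restriction} and~\eqref{e:arc-union} alone. By~\eqref{e:arc-union}, the arcs of $X_1\sqcup X_2$ are exactly the arcs of $X_1$ together with those of $X_2$, each lying entirely inside its $S_i$. By~\eqref{e:arc-restriction}, $A(Z|_{S_i},\ell|_{S_i})=A(Z,\ell)|_{S_i}$, so an arc of $X_i$ lies in $A(Z|_{S_i},\ell|_{S_i})$ if and only if it lies in $A(Z,\ell)$. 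The label conditions match for the same reason. No control over the \emph{extra} arcs of $Z$ (including any that straddle the cut) is required; the partial order only constrains the arcs of the smaller diagram. Drop the crossing claim and your argument is complete and coincides with the paper's.
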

\begin{proof}
We calculate using~\eqref{e:char-class-prod2} and~\eqref{e:lambda-basis}:
\[
\mu_{S_1,S_2}(\lambda_{X_1,\alpha_1}\otimes\lambda_{X_2,\alpha_2})=
\sum_{(X_i,\alpha_i)\leq (Y_i,\beta_i)} 
\mu_{S_1,S_2}(\kappa_{Y_1,\beta_1}\otimes\kappa_{Y_2,\beta_2})=
\sum_{(X_i,\alpha_i)\leq (Y|_{S_i},\beta|_{S_i})} \kappa_{Y,\beta}.
\]
Now, by~\eqref{e:arc-restriction} and~\eqref{e:arc-union} we have
\[
(X_i,\alpha_i)\leq (Y|_{S_i},\beta|_{S_i}),\ i=1,2\ \iff\ (X_1\sqcup X_2,\alpha_1\sqcup\alpha_2) \leq (Y,\beta).
\]
Therefore,
\[
\mu_{S_1,S_2}(\lambda_{X_1,\alpha_1}\otimes\lambda_{X_2,\alpha_2})=
\sum_{(X_1\sqcup X_2,\alpha_1\sqcup\alpha_2) \leq (Y,\beta)} \kappa_{Y,\beta}
= \lambda_{X_1\sqcup X_2,\alpha_1\sqcup\alpha_2}. \qedhere
\]
\end{proof}

The coproduct of the Hopf monoid
 $\SC(\Un)$ takes the same form on the $\lambda$-basis
 as on the $\kappa$-basis.

\begin{proposition}\label{p:lambda-coprod}
Let $I=S_1\sqcup S_2$ and $\ell\in\rL[I]$.
\begin{equation}\label{e:lambda-coprod}
\Delta_{S_1,S_2}(\lambda_{X,\alpha}) =
\begin{cases}
\lambda_{X|_{S_1},\alpha|_{S_1}} \otimes \lambda_{X|_{S_2},\alpha|_{S_2}}  & \text{if $S_1$ is the union of some blocks of $X$,} \\
0 & \text{otherwise.}
\end{cases}
\end{equation}
\end{proposition}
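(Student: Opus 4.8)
The plan is to deduce~\eqref{e:lambda-coprod} from the corresponding formula~\eqref{e:char-class-coprod2} on the $\kappa$-basis by expanding $\lambda_{X,\alpha}$ via its definition~\eqref{e:lambda-basis} and applying $\Delta_{S_1,S_2}$ termwise. So the starting point is
\[
\Delta_{S_1,S_2}(\lambda_{X,\alpha}) = \sum_{(X,\alpha)\leq (Y,\beta)} \Delta_{S_1,S_2}(\kappa_{Y,\beta}),
\]
and by~\eqref{e:char-class-coprod2} the only surviving terms are those $(Y,\beta)$ for which $S_1$ is a union of blocks of $Y$, each contributing $\kappa_{Y|_{S_1},\beta|_{S_1}}\otimes\kappa_{Y|_{S_2},\beta|_{S_2}}$.

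\textbf{Key steps.} First I would dispose of the easy case: if $S_1$ is \emph{not} a union of blocks of $X$, then since $(X,\alpha)\leq(Y,\beta)$ forces $Y$ to be coarser than $X$ — wait, that is the wrong direction, so here is the actual argument. If $S_1$ is not a union of blocks of $X$, there is an arc of $X$ with one endpoint in $S_1$ and one in $S_2$; that arc is also an arc of every $(Y,\beta)\geq(X,\alpha)$, so $S_1$ is a union of blocks of no such $Y$ either, every term vanishes, and $\Delta_{S_1,S_2}(\lambda_{X,\alpha})=0$. Second, in the case where $S_1$ \emph{is} a union of blocks of $X$, I would restrict the sum to those $(Y,\beta)\geq(X,\alpha)$ with $S_1$ a union of blocks of $Y$, and identify this indexing set with a product. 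The content here is the bijection
\[
\{(Y,\beta)\geq(X,\alpha):\ S_1 \text{ a union of blocks of } Y\} \;\longleftrightarrow\; \{(Y_1,\beta_1)\geq(X|_{S_1},\alpha|_{S_1})\}\times\{(Y_2,\beta_2)\geq(X|_{S_2},\alpha|_{S_2})\},
\]
given by $(Y,\beta)\mapsto\bigl((Y|_{S_1},\beta|_{S_1}),(Y|_{S_2},\beta|_{S_2})\bigr)$ with inverse $(Y_1,\beta_1),(Y_2,\beta_2)\mapsto(Y_1\sqcup Y_2,\beta_1\sqcup\beta_2)$. This uses~\eqref{e:arc-restriction} and~\eqref{e:arc-union} applied with $S_1$, $S_2$ as $\ell$-segments (they are, since $\ell=\ell|_{S_1}\cdot\ell|_{S_2}$ here — more precisely $\ell$ restricted to the blocks shows $S_1$, $S_2$ behave as segments on each block, which is all that is needed for the arc statements), exactly as in the proof of Proposition~\ref{p:lambda-prod}. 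Third, under this bijection the summand $\kappa_{Y|_{S_1},\beta|_{S_1}}\otimes\kappa_{Y|_{S_2},\beta|_{S_2}}$ becomes $\kappa_{Y_1,\beta_1}\otimes\kappa_{Y_2,\beta_2}$, and the sum factors:
\[
\sum_{\substack{(Y,\beta)\geq(X,\alpha)\\ S_1\text{ a union of blocks of }Y}} \kappa_{Y|_{S_1},\beta|_{S_1}}\otimes\kappa_{Y|_{S_2},\beta|_{S_2}}
= \Bigl(\sum_{(Y_1,\beta_1)\geq(X|_{S_1},\alpha|_{S_1})} \kappa_{Y_1,\beta_1}\Bigr)\otimes\Bigl(\sum_{(Y_2,\beta_2)\geq(X|_{S_2},\alpha|_{S_2})} \kappa_{Y_2,\beta_2}\Bigr)
= \lambda_{X|_{S_1},\alpha|_{S_1}}\otimes\lambda_{X|_{S_2},\alpha|_{S_2}},
\]
using~\eqref{e:lambda-basis} in the last equality.

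\textbf{Main obstacle.} The only delicate point is verifying that the partial order condition $(X,\alpha)\leq(Y,\beta)$ is \emph{equivalent} to the conjunction $(X|_{S_1},\alpha|_{S_1})\leq(Y|_{S_1},\beta|_{S_1})$ and $(X|_{S_2},\alpha|_{S_2})\leq(Y|_{S_2},\beta|_{S_2})$ \emph{together with} $S_1$ being a union of blocks of $Y$ — i.e. that the last condition is automatic on the left but must be imposed on the right, and that no arcs of $Y$ ``cross'' between $S_1$ and $S_2$ get lost or created in the correspondence. This is handled by the observation that if $S_1$ is a union of blocks of $X$ and $X$-arcs all stay within $S_1$ or within $S_2$, then for $Y\geq X$ with $S_1$ a union of blocks of $Y$ the same holds, so $A(Y,\ell)=A(Y,\ell)|_{S_1}\sqcup A(Y,\ell)|_{S_2}$ and~\eqref{e:arc-restriction},~\eqref{e:arc-union} apply cleanly; I expect this to be a short routine check rather than a real difficulty, and the whole proof to parallel that of Proposition~\ref{p:lambda-prod} closely, the one new ingredient being the bookkeeping of the ``$S_1$ is a union of blocks'' condition through the order.
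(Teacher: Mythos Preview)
Your approach is essentially the same as the paper's: expand $\lambda_{X,\alpha}$ via~\eqref{e:lambda-basis}, apply~\eqref{e:char-class-coprod2} termwise, dispose of the case where $S_1$ is not a union of blocks of $X$ by noting this propagates to every $(Y,\beta)\geq(X,\alpha)$, and in the remaining case factor the surviving sum using the bijection $(Y,\beta)\leftrightarrow\bigl((Y_1,\beta_1),(Y_2,\beta_2)\bigr)$ via~\eqref{e:arc-union}. One slip to clean up: for the coproduct $S_1$ and $S_2$ are \emph{not} generally $\ell$-segments (your claim $\ell=\ell|_{S_1}\cdot\ell|_{S_2}$ is false here), but this does not matter---your parenthetical is the right fix, and in fact~\eqref{e:arc-union} holds for arbitrary decompositions, while the analogue of~\eqref{e:arc-restriction} you need holds because $S_i$ is a union of blocks of $Y$ (so every arc of $Y$ lies entirely within $S_1$ or entirely within $S_2$, and consecutiveness within a block is unaffected by restriction).
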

\begin{proof}
Suppose first that $S_1$ is not the union of blocks of $X$.
Then the same is true for any partition coarser than $X$; in particular,
for any partition $Y$ entering in~\eqref{e:lambda-basis}. In view of
~\eqref{e:char-class-coprod2}, we then have $\Delta_{S_1,S_2}(\lambda_{X,\alpha}) = 0$.

Otherwise, $X=X|_{S_1}\sqcup X|_{S_2}$ and $\alpha=\alpha|_{S_1}\sqcup \alpha|_{S_2}$.
 Among the arc diagrams $(Y,\beta)$
entering in~\eqref{e:lambda-basis}, only those for which $Y=Y|_{S_1}\sqcup Y|_{S_2}$ contribute to the coproduct, in view of~\eqref{e:char-class-coprod2}.
These arc diagrams are of the form $Y=Y_1\sqcup Y_2$, $\beta=\beta_1\sqcup\beta_2$,  and by~\eqref{e:arc-union}
we must have
\[
A(X|_{S_i},\ell|_{S_i})\subseteq A(Y_i,\ell|_{S_i}),
\quad
\beta_i|_{A(X|_{S_i},\ell|_{S_i})} = \alpha|_{S_i},
\quad
i=1,2.
\]
We then have
\[
\Delta_{S_1,S_2}(\lambda_{X,\alpha}) =
\sum_{(X,\alpha)\leq (Y,\beta)} \Delta(\kappa_{Y,\beta}) =
\sum_{(X|_{S_i},\alpha|_{S_i})\leq (Y_i,\beta_i)} \kappa_{Y_1,\beta_1}\otimes \kappa_{Y_2,\beta_2}=  \lambda_{X|_{S_1},\alpha|_{S_1}} \otimes \lambda_{X|_{S_2},\alpha|_{S_2}}.
\]

\end{proof}

\begin{remark}
The relationship between the $\lambda$ and $\kappa$-bases of $\SC(\Un)$
is somewhat reminiscent of that between the $p$ and $m$-bases of $\bPi$~\cite[Equation~(12.5)]{AM:2010}. However, the latter involves a sum over all
partitions coarser than a given one. For this reason, the morphism $\phi$ in~\eqref{e:s-model},
which relates the $m$-basis to the $\kappa$-basis, does not relate the
$p$-basis to the $\lambda$-basis in the same manner.
\end{remark}

\subsection{Freeness of $\SC(\Un)$}\label{ss:free-sc}

Let $\bq$ be a species such that $\bq[\emptyset]=0$. A new species $\Tc(\bq)$
is defined by $\Tc(\bq)[\emptyset]=\field$ and, on a finite nonempty set $I$,
\[
\Tc(\bq)[I] = \bigoplus_{\substack{I=I_1\sqcup \cdots \sqcup I_k\\k\geq 1,\, I_j\neq\emptyset}} 
\bq[I_1]\otimes\cdots\otimes\bq[I_k].
\]
The sum is over all decompositions of $I$ into nonempty subsets. The number $k$ of
subsets is therefore bounded above by $\abs{I}$.

The species $\Tc(\bq)$ is a connected monoid with product given by
concatenation. To describe this in more detail,
let $I=S\sqcup T$, choose decompositions $S=S_1\sqcup\cdots\sqcup S_k$,
$T=T_1\sqcup\cdots\sqcup T_l$, and elements $x_i\in\bq[S_i]$, $i=1,\ldots,k$,
and $y_j\in\bq[T_j]$, $j=1,\ldots,l$. Write
\[
x=x_1\otimes\cdots\otimes x_i\in \Tc(\bq)[S]
\qand
y=y_1\otimes\cdots\otimes y_j\in \Tc(\bq)[T].
\]
The product is
\[
\mu_{S,T}(x\otimes y) = x_1\otimes\cdots\otimes x_i\otimes y_1\otimes\cdots\otimes y_j
\in \bq[S_1]\otimes\cdots\otimes\bq[S_k]\otimes\bq[T_1]\otimes\cdots\otimes\bq[T_l]
\subseteq \Tc(\bq)[I].
\]

The monoid $\Tc(\bq)$ is \emph{free} on the species $\bq$: 
a map of species $\bq\to\bmm$ from $\bq$ to a monoid $\bmm$ has a unique extension
to a morphism of monoids $\Tc(\bq)\to\bmm$.

The monoid $\Tc(\bq)$ may carry several coproducts that turn it into a 
connected Hopf monoid. 
The \emph{canonical} structure is the one for which the elements of $\bq$
are \emph{primitive}. This means that
\[
\Delta_{S,T}(x) = 0
\]
for every $x\in\bq[I]$ and every decomposition $I=S\sqcup T$ into nonempty subsets.

More details can be found in~\cite[Sections~11.2.1--11.2.2]{AM:2010}.

\smallskip

Let $\rD(I,\ell)$ denote the set of arc diagrams $(X,\alpha)$ for which
$X$ is an atomic set partition of the linearly ordered set $(I,\ell)$.
Let $\bd(I,\ell)$ be the vector space with basis $\rD(I,\ell)$.
Define a species $\bd$ by
\[
\bd[I] = \bigoplus_{\ell\in\rL[I]} \bd(I,\ell).
\]

Consider the map of species $\bd\to\SC$ with components
\[
\bd[I] \to \SC(\Un)[I], 
\quad
(X,\alpha) \mapsto \lambda_{X,\alpha}.
\]
The map sends the summand $\bd(I,\ell)$ of $\bd[I]$
to the summand $\SC\bigl(\Un(I,\ell)\bigr)$ of $\SC(\Un)[I]$.
By freeness, it extends to a morphism of monoids
\[
\Tc(\bd) \to \SC(\Un).
\]

\begin{proposition}\label{p:free-sc}
The map $\Tc(\bd) \to \SC(\Un)$ is an isomorphism of monoids.
In particular, the monoid $\SC(\Un)$ is free.
\end{proposition}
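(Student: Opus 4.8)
The plan is to show that the monoid morphism $\Tc(\bd)\to\SC(\Un)$, which by construction sends a tensor $(X_1,\alpha_1)\otimes\cdots\otimes(X_k,\alpha_k)$ (with the $(X_j,\alpha_j)$ atomic arc diagrams on successive $\ell$-segments $I_j$ of $(I,\ell)$) to the product $\lambda_{X_1,\alpha_1}\cdots\lambda_{X_k,\alpha_k}$, is a bijection on each homogeneous component. By Proposition~\ref{p:lambda-prod}, this product equals $\lambda_{X,\alpha}$ where $(X,\alpha)$ is the arc diagram on $(I,\ell)$ obtained by taking the union of the $X_j$ and the common extension of the $\alpha_j$; note $\ell=\ell|_{I_1}\cdots\ell|_{I_k}$. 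So concretely the map sends a decomposition of $(I,\ell)$ into $\ell$-segments together with an atomic arc diagram on each piece, to the single arc diagram $\lambda_{X,\alpha}$ assembled from them.

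First I would observe that, since the $\lambda_{X,\alpha}$ (as $(X,\alpha)$ ranges over all arc diagrams on all $\ell\in\rL[I]$) form a basis of $\SC(\Un)[I]$ by~\eqref{e:lambda-basis}, it suffices to prove that every arc diagram $(X,\alpha)$ on every linearly ordered set $(I,\ell)$ arises from a \emph{unique} such assembly — i.e.\ that $X$ admits a unique decomposition $X=X|_{I_1}\sqcup\cdots\sqcup X|_{I_k}$ into $\ell$-segments $I_1,\ldots,I_k$ (ordered compatibly with $\ell$) on each of which $X$ restricts to an atomic partition. This is a purely combinatorial statement about a partition $X$ of a linearly ordered set. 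The key step is to define the $I_j$ canonically: reading $(I,\ell)$ from the start, let $I_1$ be the \emph{shortest} nonempty proper initial $\ell$-segment that is a union of blocks of $X$ (or all of $I$ if no proper such segment exists), then recurse on the complementary final segment with its induced order and partition. By construction each $I_j$ is an $\ell$-segment, $X|_{I_j}$ is atomic (minimality forbids any proper initial sub-segment of $I_j$ being a union of blocks of $X|_{I_j}$), and $X=\bigsqcup_j X|_{I_j}$. For uniqueness, one checks that in any valid decomposition the first piece must be a union of blocks of $X$ that is an initial $\ell$-segment, hence contains the canonical $I_1$; atomicity of its restriction then forces it to equal $I_1$, and induction finishes the argument.

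Having established the bijection on bases, the map $\Tc(\bd)\to\SC(\Un)$ is a linear isomorphism on each component $[I]$, and being already a morphism of monoids by construction, it is an isomorphism of monoids; since $\Tc(\bd)$ is a free monoid on $\bd$, so is $\SC(\Un)$.

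I expect the main obstacle to be the uniqueness half of the combinatorial claim — specifically, verifying carefully that ``atomic restriction on each $\ell$-segment piece'' pins down the decomposition, using the equivalent formulations of atomicity from Section~\ref{ss:partial} (no proper initial $\ell$-segment being a union of blocks) and the transitivity property that a union of blocks of $X$ which is an initial $\ell$-segment of an initial $\ell$-segment is an initial $\ell$-segment of the whole. Everything else (that the $\lambda$'s form a basis, that the map is a monoid morphism, that freeness transfers along a monoid isomorphism) is immediate from the cited results.
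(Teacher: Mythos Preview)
Your proposal is correct and follows essentially the same approach as the paper: both decompose an arbitrary arc diagram $(X,\alpha)$ on $(I,\ell)$ into its minimal $\ell$-segments $I_1,\ldots,I_k$ that are unions of blocks of $X$, note that each restriction is atomic, and use~\eqref{e:lambda-prod} to identify the image of the corresponding tensor with $\lambda_{X,\alpha}$. The paper's proof is terser---it simply asserts that this assignment sends basis elements to basis elements and hence gives an isomorphism---whereas you spell out the uniqueness of the segment decomposition more carefully; but the underlying argument is the same.
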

\begin{proof}
Let $(X,\alpha)$ be an arbitrary arc diagram on $(I,\ell)$.
Let $I_1,\ldots,I_k$ be the minimal $\ell$-segments of $I$, numbered from left to right,
such that each $I_j$ is a union of blocks of $X$. Let $\ell_j=\ell|_{I_j}$, $X_j=X|_{I_j}$, and
$\alpha_j=\alpha|_{I_j}$. Then $X_j$ is an atomic partition of $(I_j,\ell_j)$,
\[
X_1\sqcup \cdots \sqcup X_j = X
\qand
\alpha_1\sqcup \cdots \sqcup \alpha_j = \alpha.
\]
By~\eqref{e:lambda-prod},
\[
\mu_{I_1,\ldots,I_k}(\lambda_{X_1,\alpha_1}
\otimes\cdots\otimes\lambda_{X_k,\alpha_k}) = \lambda_{X,\alpha}.
\]
Thus, the morphism $\Tc(\bd) \to \SC(\Un)$ sends the basis element
$(X_1,\alpha_1)\otimes\cdots\otimes(X_k,\alpha_k)$ of
$\bd(I_1,\ell_1)\otimes\cdots\otimes\bd(I_k,\ell_k)$
to the basis element $\lambda_{X,\alpha}$ of $\SC\bigl(\Un(I,\ell)\bigr)$,
and is therefore an isomorphism.
\end{proof}

We may state Proposition~\ref{p:free-sc} by saying that the superclass functions
$\lambda_{X,\alpha}$ freely generate the monoid $\SC(\Un)$, as $(X,\alpha)$
runs over all arc diagrams for which $X$ is an atomic set partition.

The generators, however, need not be primitive. For instance,
\[
\xymatrix@R-25pt{
\\
\\
*{\bullet} \ar@{-}@/^2pc/[rr] & *{\bullet}  & *{\bullet} &  \ar@{|->}[rr]^-{\Delta_{\{i,k\},\{j\}}}
& & & *{\bullet} \ar@{-}@/^2pc/[r] &
*{\bullet} \ar@{}[r]^-{\otimes} & 
*{\bullet} \\
 i & j & k & &  & & i & k  & j
}
\]
which is not $0$.
Nevertheless, Proposition~\ref{p:freemon} allows us to conclude the following.

\begin{corollary}\label{c:free-sc}
Let $\field$ be a field of characteristic $0$. There exists an isomorphism
of Hopf monoids $\SC(\Un)\cong\Tc(\bd)$, where the latter is endowed
with its canonical Hopf structure.
\end{corollary}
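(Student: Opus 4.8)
The plan is to deduce Corollary~\ref{c:free-sc} by combining the monoid isomorphism of Proposition~\ref{p:free-sc} with the general rigidity result for free monoids referenced in the excerpt as Proposition~\ref{p:freemon}. That result (in the appendix) should say: if $\bmm$ is a connected Hopf monoid over a field of characteristic $0$ whose underlying monoid is free on a positively graded species, then $\bmm$ is isomorphic, \emph{as a Hopf monoid}, to the free monoid on its space of primitives equipped with the canonical Hopf structure; equivalently, any free monoid structure on a connected Hopf monoid can be upgraded to a Hopf isomorphism with $\Tc(\bp)$ for a suitable $\bp$, even when the chosen free generators fail to be primitive.

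First I would record that $\SC(\Un)$ is a connected Hopf monoid (Proposition~\ref{p:super-unit}) and that it is free as a monoid (Proposition~\ref{p:free-sc}), with explicit free generating species $\bd$. Since $\field$ has characteristic $0$ by the standing assumption of Section~\ref{s:free}, the hypotheses of Proposition~\ref{p:freemon} are met. Applying that proposition yields an isomorphism of Hopf monoids $\SC(\Un)\cong\Tc(\bp)$, where $\bp$ is the species of primitives of $\SC(\Un)$ and $\Tc(\bp)$ carries its canonical Hopf structure. To conclude, I would note that the species $\bp$ of primitives is abstractly isomorphic to $\bd$: indeed, for a free monoid $\Tc(\bq)$ over a characteristic-zero field the space of primitives in degree $n$ has the same dimension as $\bq[n]$ (both are computed from the same generating-series identity relating the monoid to its generators), so $\bp\cong\bd$ as species, and hence $\Tc(\bp)\cong\Tc(\bd)$ as Hopf monoids with the canonical structure. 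This gives $\SC(\Un)\cong\Tc(\bd)$ with $\Tc(\bd)$ canonically structured.

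The one genuine subtlety — and the step I expect to require the most care — is making sure the invocation of Proposition~\ref{p:freemon} is legitimate, namely that the chosen generators being non-primitive (as the displayed example with $\ell=ijk$ shows) is not an obstruction. The point is that freeness as a \emph{monoid} is an intrinsic property, independent of the choice of generating set, so even though $\bd\hookrightarrow\SC(\Un)$ does not land in the primitives, the monoid $\SC(\Un)$ is still free, and the appendix result applies to any such monoid regardless of how freeness was witnessed. I would state this explicitly to forestall the objection, citing the displayed computation only as a remark that the naive hope (generators primitive) fails, and then lean entirely on the structural theorem. No further computation is needed; the corollary is a direct consequence once Propositions~\ref{p:free-sc} and~\ref{p:freemon} are in hand.
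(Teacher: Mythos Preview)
Your approach is essentially the paper's: invoke Proposition~\ref{p:freemon} after establishing freeness via Proposition~\ref{p:free-sc}. However, you slightly misstate what Proposition~\ref{p:freemon} delivers. Its conclusion is that if $\bh\cong\Tc(\bp)$ as \emph{monoids}, then $\bh\cong\Tc(\bp)$ as \emph{Hopf monoids} (canonical structure) with the \emph{same} $\bp$ --- not with the species of primitives. So once Proposition~\ref{p:free-sc} gives $\SC(\Un)\cong\Tc(\bd)$ as monoids, Proposition~\ref{p:freemon} immediately yields $\SC(\Un)\cong\Tc(\bd)$ as Hopf monoids, and your detour through the primitives and a dimension-count identification $\bp\cong\bd$ is unnecessary. (That detour would also need more care in the species setting: equal dimensions on each $[n]$ do not by themselves force a species isomorphism, since one needs agreement as $\Sr_n$-representations; the appendix argument handles this by showing the Eulerian idempotent gives an explicit isomorphism $\bd\cong\euler(\bd)$.) Finally, note that Proposition~\ref{p:freemon} requires cocommutativity, not just connectedness; you should cite this explicitly --- it is part of Proposition~\ref{p:super-unit}.
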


As discussed in the appendix, an isomorphism may be constructed with the
aid of the first Eulerian idempotent.

Let $\bPi_a(I,\ell)$ be the vector space with basis the set of
atomic partitions on $(I,\ell)$.
When the field of matrix entries is $\Fb_2$,  arc diagrams reduce
to atomic set partitions and $\bd(I,\ell)$ identifies with $\bPi_a(I,\ell)$.
Combining Corollaries~\ref{c:super-structure} and~\ref{c:free-sc}
 we obtain an isomorphism
of Hopf monoids
\begin{equation}\label{e:free-partition}
\bL\times\bPi \cong \Tc(\bPi_a),
\end{equation}
where 
\[
\bPi_a[I]= \bigoplus_{\ell\in\rL[I]} \bPi_a(I,\ell).
\]

\subsection{A second basis for $\bG$ and for $\FC(\Un)$}\label{ss:lambda-graph}

Given two unitriangular matrices $U$ and $V\in\Un(I,\ell)$, let us write $U\leq V$
if
\[
u_{ij}=v_{ij} \text{ whenever }u_{ij}\neq 0.
\]
In other words, some zero entries in $U$ may be nonzero in $V$; the other entries
are the same for both matrices.

We define a new basis $\{\lambda_U\}$ of $\FC\bigl(\Un(I,\ell)\bigr)$
by
\[
\lambda_{U} = \sum_{U\leq V} \kappa_V.
\]

Let $I=S_1\sqcup S_2$, $U\in\Un(I,\ell)$, and $g_i\in\Un(S_i,\ell_i)$, $i=1,2$. 
It is easy to derive the following formulas from~\eqref{e:char-f-prod} and~\eqref{e:char-f-coprod}.
\begin{gather}\label{e:char-lambda-prod}
\mu_{S_1,S_2}(\lambda_{U_1}\otimes \lambda_{U_2}) = \lambda_{U_1\oplus U_2},
\\
\label{e:char-lambda-coprod}
\Delta_{S_1,S_2}(\lambda_{U}) = 
\begin{cases}
\lambda_{U_{S_1}}\otimes \lambda_{U_{S_2}} & \text{ if } U= U_{S_1} \oplus U_{S_2}, \\
   0      & \text{ otherwise.}
\end{cases}
\end{gather}

Formula~\eqref{e:char-lambda-prod} implies that $\FC(\Un)$ is a free monoid with generators $\lambda_U$
indexed by unitriangular matrices $U$ for which the graph $g(U)$ is connected.

For completeness, one may define a new basis $\{p_g\}$ of $\bG[I]$ by
\begin{equation}\label{e:p-graph}
p_g = \sum_{g\subseteq h} m_h.
\end{equation}
The sum is over all simple graphs $h$ with vertex set $I$ and with the same or
more edges than $g$.
Let $I=S_1\sqcup S_2$, $g\in\rG[I]$, and $g_i\in\rG[S_i]$, $i=1,2$. 
From~\eqref{e:prod-g} and~\eqref{e:coprod-g} one obtains
\begin{gather}\label{e:lambda-prod-graph}
\mu_{S_1,S_2}(p_{g_1}\otimes p_{g_2}) = p_{g_1\sqcup g_2}\\
\Delta_{S_1,S_2}(p_g) =
\begin{cases}
p_{g|_{S_1}} \otimes p_{g|_{S_2}}  & \text{if no edge of $g$ connects $S_1$ to $S_2$,} \\
0 & \text{otherwise.}
\end{cases}
\end{gather}
Formula~\eqref{e:lambda-prod-graph}
implies that $\bG$ is the free commutative monoid on the species of connected
graphs.
From~\eqref{e:s-model} we deduce that the morphism $\phi$ of Proposition~\ref{p:f-model} takes the following form 
on these bases:
\[
\phi(\ell\otimes p_g) = \sum_{U\in\Un(I,\ell):\, g(U)=g} \lambda_U.
\]

\section{Applications}\label{s:additional}

We conclude with some applications and remarks regarding past and future work.

\subsection{Counting conjugacy classes}\label{ss:counting}

Let $k_n(q)$ denote the number of conjugacy classes of the group of unitriangular matrices of size $n$ with entries in the field with $q$ elements.
Higman's conjecture states that, for fixed $n$, $k_n$
is a polynomial function of $q$.
Much effort has been devoted to the precise determination
of these numbers or their asymptotic behavior~\cite{Goo:2006,GR:2009,Hig:1960,Rob:1998,VA:2003,VAOV:2008}.

We fix $q$ and let $n$ vary. It turns out that the existence of a Hopf monoid structure
on class functions imposes certain linear conditions on the sequence $k_n(q)$,
as we explain next.

Given a finite-dimensional Hopf monoid $\bh$, consider the generating function
\begin{equation}\label{e:type}
\type{\bh}{x} = \sum_{n\geq 0} \dim_{\field} \bigl(\bh[n]_{\Sr_n}\bigr) x^n.
\end{equation}
Here $[n]$ denotes the set $\{1,2,\ldots,n\}$ and $\bh[n]_{\Sr_n}$
is the (quotient) space of coinvariants for the action of the symmetric group
(afforded by the species structure of $\bh$).

For example, since
\[
(\bL\times\bPi)[n]_{\Sr_n} = (\bL[n]\otimes\bPi[n])_{\Sr_n} \cong \bPi[n],
\]
we have
\begin{equation}\label{e:type-pi}
\type{\bL\times\bPi}{x} = \sum_{n\geq 0} B_n x^n,
\end{equation}
where $B_n$ is the \emph{$n$-th Bell number}, the number of partitions of the set $[n]$. 

On the other hand, from~\eqref{e:free-partition}, 
\[
\type{\bL\times\bPi}{x} = \type{\Tc(\bPi_a)}{x}.
\]
It is a general fact that, for a species $\bq$ with $\bq[\emptyset]=0$,
\[
\type{\Tc(\bq)}{x} = \frac{1}{1-\type{\bq}{x}}.
\]
(This follows for instance from~\cite[Theorem~2.b, Section 1.4]{BLL:1998}.)
Therefore,
\begin{equation}\label{e:type-pi2}
\type{\bL\times\bPi}{x} = \frac{1}{\displaystyle 1-\sum_{n\geq 1} A_n x^n}
\end{equation}
where $A_n$ is the number of atomic partitions of the linearly ordered set $[n]$.

From~\eqref{e:type-pi} and~\eqref{e:type-pi2} we deduce that
\[
\sum_{n\geq 0} B_n x^n = \frac{1}{\displaystyle 1-\sum_{n\geq 1} A_n x^n}
\]
a fact known from~\cite{BZ:2009}.

\smallskip

Consider now the injections
\[
\SC(\Un) \into \CF(\Un) \qand \bL\times\bPi \into \SC(\Un).
\]
Both are morphisms of Hopf monoids (Propositions~\ref{p:super-unit} and~\ref{p:s-model}).
\emph{Lagrange's theorem} for Hopf monoids implies in this situation that both quotients
\[
\frac{\type{\CF(\Un)}{x}}{\type{\SC(\Un)}{x}}
\qand
\frac{\type{\SC(\Un)}{x}}{\type{\bL\times\bPi}{x}}
\]
belong to $\Nb\llb x\rrb$, that is, have nonnegative (integer) coefficients~\cite[Corollary~13]{AL:2012}. In particular,
\[
\frac{\type{\CF(\Un)}{x}}{\type{\bL\times\bPi}{x}} \in \Nb\llb x\rrb
\]
as well. 

We have
\[
\CF(\Un)[n]_{\Sr_n} = \Biggl(\bigoplus_{\ell\in\rL[n]}\CF\bigl(\Un([n],\ell)\bigr)\Biggr)_{\Sr_n} \cong \CF\bigl(\Un([n])\bigr).
\]
Therefore,
\[
\type{\CF(\Un)}{x} = \sum_{n\geq 0} k_n(q) x^n.
\]

By combining the above, we deduce
\[
\Bigl(\sum_{n\geq 0} k_n(q) x^n\Bigr)\Bigl(1-\sum_{n\geq 1} A_n x^n\Bigr) \in \Nb\llb x\rrb,
\]
whence the following result.

\begin{corollary}\label{c:counting}
The following linear inequalities are satisfied for every $n\in\Nb$ and every prime power $q$.
\begin{equation}\label{e:counting}
k_n(q) \geq \sum_{i=0}^{n-1} A_{n-i}\, k_i(q).
\end{equation}
\end{corollary}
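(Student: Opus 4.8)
The plan is to extract the inequality directly from the membership
\[
\Bigl(\sum_{n\geq 0} k_n(q)\,x^n\Bigr)\Bigl(1-\sum_{n\geq 1} A_n\,x^n\Bigr)\in\Nb\llb x\rrb,
\]
which has already been established in the paragraph preceding the statement via Lagrange's theorem for Hopf monoids applied to the chain of inclusions $\bL\times\bPi\into\SC(\Un)\into\CF(\Un)$, together with the identifications $\type{\CF(\Un)}{x}=\sum_n k_n(q)x^n$ and $\type{\bL\times\bPi}{x}=1\big/\bigl(1-\sum_{n\geq 1}A_n x^n\bigr)$ coming from~\eqref{e:type-pi2} and~\eqref{e:free-partition}. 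So the only thing left is a routine coefficient extraction.

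First I would fix $n\geq 1$ and read off the coefficient of $x^n$ in the product above. Writing $A_0:=0$ and $A_1,A_2,\ldots$ for the number of atomic partitions (so that $1-\sum_{n\geq1}A_n x^n$ has constant term $1$), the coefficient of $x^n$ in $\bigl(\sum_{m\geq0}k_m(q)x^m\bigr)\bigl(1-\sum_{m\geq1}A_m x^m\bigr)$ is
\[
k_n(q)\;-\;\sum_{j=1}^{n}A_j\,k_{n-j}(q).
\]
Since this quantity is a nonnegative integer (being a coefficient of an element of $\Nb\llb x\rrb$), we get $k_n(q)\geq\sum_{j=1}^{n}A_j\,k_{n-j}(q)$. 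Reindexing by $i=n-j$ turns this into $k_n(q)\geq\sum_{i=0}^{n-1}A_{n-i}\,k_i(q)$, which is exactly~\eqref{e:counting}; for $n=0$ the statement reads $k_0(q)\geq 0$, which is trivially true. One should also note $k_0(q)=1$ (the trivial group), so the recursion is genuinely nontrivial already at $n=1$, where it gives $k_1(q)\geq A_1=1$.

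There is no real obstacle here: the entire content of the corollary is the positivity input, and that has been done upstream by invoking~\cite[Corollary~13]{AL:2012}. The one point worth a sentence of care is the bookkeeping of the constant term — ensuring that the factor is $1-\sum_{n\geq1}A_n x^n$ rather than $\sum_{n\geq1}A_n x^n$, so that the coefficient of $x^n$ in the product isolates $k_n(q)$ with coefficient $+1$ and the remaining terms appear with the minus sign that yields the stated inequality. After that, the proof is simply "compare coefficients."
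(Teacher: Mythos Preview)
Your proposal is correct and follows exactly the paper's own route: the positivity input from Lagrange's theorem and the identifications of the type generating functions are set up in the preceding discussion, and the corollary is deduced by reading off the coefficient of $x^n$ in the product $\bigl(\sum k_n(q)x^n\bigr)\bigl(1-\sum A_n x^n\bigr)$. There is nothing to add.
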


For instance, for $n=8$, the inequality is
\[
k_6(q) \geq 92 + 22 k_1(q) + 6 k_2(q) + 2 k_3(q) + 
k_4(q) + k_5(q).
\]

Inequality~\eqref{e:counting} is stronger than merely stating that there are more conjugacy classes than superclasses. 
For instance, for $q=2$ and $n=6$, the right hand side of the
inequality is $213$ (provided we
use the correct values for $k_i(2)$ for $i\leq 5$), while there are only $B_6=203$ superclasses.  The first few values of the sequence $k_n(2)$ appear
in~\cite{Slo:oeis} as A007976; in particular, $k_6(2)=275$.

The numbers $k_n(q)$ are known for $n\leq 13$ from work of 
Arregi and Vera-L\'opez~\cite{VA:1992,VA:1995,VA:2003}; see also~\cite{VAOV:2008}. (There is an incorrect sign in the value given for $k_7(q)$ in~\cite[page~923]{VA:1995}: the lowest term should be $-7q$.)

We may derive additional information on these numbers from the injective
morphism of Hopf monoids (Proposition~\ref{p:class-unit})
\[
\CF(\Un)\into \FC(\Un).
\]
Define a sequence of integers $c_n(q)$, $n\geq 1$, by means of
\begin{equation}\label{e:kc}
\sum_{n\geq 0} k_n(q) x^n = \frac{1}{\displaystyle 1-\sum_{n\geq 1} c_n(q) x^n}.
\end{equation}
Arguing as above we obtain the following result.
\begin{corollary}\label{c:counting2}
The following linear inequalities are satisfied for every $n\in\Nb$ and every prime power $q$.
\begin{equation}\label{e:counting2}
q^{\binom{n}{2}} \geq \sum_{i=1}^{n} q^{\binom{n-i}{2}}\, c_i(q).
\end{equation}
\end{corollary}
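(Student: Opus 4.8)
The plan is to mimic exactly the argument that produced Corollary~\ref{c:counting}, but now applied to the injection $\CF(\Un)\into\FC(\Un)$. First I would compute the generating function $\type{\FC(\Un)}{x}$. Since the symmetric group acts freely on linear orders, we have $\FC(\Un)[n]_{\Sr_n}\cong\FC\bigl(\Un([n])\bigr)$, which has dimension equal to the order of the group $\Un([n])$, namely $q^{\binom n2}$. Hence $\type{\FC(\Un)}{x}=\sum_{n\geq 0}q^{\binom n2}x^n$. Next I would recall that $\type{\CF(\Un)}{x}=\sum_{n\geq 0}k_n(q)x^n$, which was already established in Section~\ref{ss:counting}, and that the definition~\eqref{e:kc} of the $c_n(q)$ is precisely the statement that $\sum_{n\geq 1}c_n(q)x^n = 1 - \type{\CF(\Un)}{x}^{-1}$, equivalently
\[
\type{\CF(\Un)}{x} = \frac{1}{\displaystyle 1-\sum_{n\geq 1}c_n(q)x^n}.
\]

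The second step is to invoke Lagrange's theorem for Hopf monoids~\cite[Corollary~13]{AL:2012}: since $\CF(\Un)\into\FC(\Un)$ is an injective morphism of Hopf monoids (Proposition~\ref{p:class-unit}), and since the base field has characteristic $0$, the quotient
\[
\frac{\type{\FC(\Un)}{x}}{\type{\CF(\Un)}{x}}
\]
lies in $\Nb\llb x\rrb$. Substituting the two generating functions computed above, this quotient equals
\[
\Bigl(\sum_{n\geq 0}q^{\binom n2}x^n\Bigr)\Bigl(1-\sum_{n\geq 1}c_n(q)x^n\Bigr).
\]
Reading off the coefficient of $x^n$ for $n\geq 1$ (the $n=0$ coefficient is $1$, and $n=0$ is trivially covered as well since the stated inequality at $n=0$ reads $1\geq 0$) gives
\[
q^{\binom n2} - \sum_{i=1}^{n}q^{\binom{n-i}2}c_i(q)\geq 0,
\]
which is exactly~\eqref{e:counting2}. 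Extending by linearity over $\Nb$ and noting the inequality holds for every prime power $q$ completes the proof.

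The only genuine content, beyond bookkeeping, is the identification of $\type{\FC(\Un)}{x}$, and this is immediate: $\Un([n])$ has $q^{\binom n2}$ elements because a unitriangular $n\times n$ matrix over $\Fb_q$ has $\binom n2$ freely chosen entries above the diagonal, so the space of all $\field$-valued functions on it has dimension $q^{\binom n2}$. The main obstacle, such as it is, is purely expository: one must be careful that the hypotheses of~\cite[Corollary~13]{AL:2012} are met --- in particular that both Hopf monoids are connected and finite-dimensional (clear, since each $\Un(I,\ell)$ is a finite group once $\Fb$ is finite) and that $\field$ has characteristic $0$ (assumed throughout Section~\ref{s:free}, and the relevant hypothesis for the Lagrange theorem). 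No new combinatorics is needed; the argument is a direct transcription of the one giving Corollary~\ref{c:counting}, with $\FC(\Un)$ in place of $\CF(\Un)$ and $\CF(\Un)$ in place of $\bL\times\bPi$.
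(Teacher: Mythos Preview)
Your proposal is correct and follows exactly the argument the paper intends: the paper's own proof is simply ``Arguing as above we obtain the following result,'' referring to the Lagrange-theorem argument for Corollary~\ref{c:counting} applied now to the injection $\CF(\Un)\into\FC(\Un)$ of Proposition~\ref{p:class-unit}, together with the computation $\type{\FC(\Un)}{x}=\sum_{n\geq 0}q^{\binom n2}x^n$. Your write-up fills in precisely these details; the only superfluous phrase is ``Extending by linearity over $\Nb$,'' which can be dropped.
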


Through~\eqref{e:kc}, these inequalities impose further constraints on the
numbers $k_n(q)$.

The first few values of the sequence $c_n(q)$ are as follows, with $t=q-1$.
\begin{align*}
c_1(q) &= 1\\
c_2(q) & = t\\
c_3(q) & = t^2 +t \\
c_4(q) & = 2t^3 +4t^2 +t \\
c_5(q) & = 5t^4 +14t^3 +9t^2 +t\\
c_6(q) &= t^6 +18t^5 +55t^4 +54t^3 +16t^2 +t
\end{align*}

\begin{conjecture}\label{cj:stronghigman}
There exists polynomials $p_n(t)\in\Nb[t]$ such that $c_n(q) = p_n(q-1)$ for
every prime power $q$ and every $n\geq 1$.
\end{conjecture}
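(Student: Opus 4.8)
This conjecture is open, and we only describe the approach we would take.
Since $k_n$ and $c_n$ determine one another through~\eqref{e:kc} by $\Zb$-polynomial substitutions, the conjecture implies Higman's conjecture on the polynomiality of $k_n(q)$; conversely, Higman's conjecture only produces polynomials $p_n(t)$ with $c_n(q)=p_n(q-1)$ and gives no control over the signs of their coefficients. The content of the refinement is therefore the nonnegativity of those coefficients, and the plan is to obtain it by realising the numbers $c_n(q)$ as generating functions of field-independent combinatorial objects, exactly as Section~\ref{ss:super-class} and Proposition~\ref{p:free-sc} do for $\SC(\Un)$.

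The key step would be to construct, for each linearly ordered set $(I,\ell)$, an analogue for $\CF\bigl(\Un(I,\ell)\bigr)$ of the pair of bases $\{\kappa_{X,\alpha}\}$ and $\{\lambda_{X,\alpha}\}$ of Sections~\ref{ss:super-class} and~\ref{ss:lambda}: a basis $\{\kappa_P\}$ indexed by ``patterns'' $P$ together with a labelling of a distinguished set of ``free positions'' of $P$ by elements of $\Fb^{\times}$, with both the set of patterns and the number of free positions independent of $\Fb$; and a companion basis $\{\lambda_P\}$, obtained from $\{\kappa_P\}$ by an upper-triangular transition matrix for a suitable partial order, on which the product of $\CF(\Un)$ is concatenation, as in Proposition~\ref{p:lambda-prod}. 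Granting this, the argument of Proposition~\ref{p:free-sc} would apply essentially verbatim: $\CF(\Un)$ would be a free monoid on the species $\mathbf{c}$ of ``atomic'' patterns, and the ensuing isomorphism $\CF(\Un)\cong\Tc(\mathbf{c})$ would identify $c_n(q)$ with $\dim_{\field}\bigl(\mathbf{c}[n]_{\Sr_n}\bigr)$, that is, with the number of atomic patterns on $[n]$, each weighted by $(q-1)$ raised to its number of free positions. That count is visibly a polynomial in $t=q-1$ with nonnegative integer coefficients, which is the assertion of the conjecture.

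The main obstacle is the first requirement: a canonical form for conjugacy classes of $\Un(I,\ell)$ as transparent as Yan's canonical representatives for superclasses. No such description is known---this is why no combinatorial model for $\CF(\Un)$ is given in Section~\ref{ss:class-unit}---and supplying one would in particular prove Higman's conjecture, so this route cannot be expected to be short. As partial progress one can instead target small $n$ (the displayed values already confirm the conjecture through $n=6$), building on the stratification of $\Un([n])$ by pivot sets used by Arregi and Vera-L\'opez; here one would check that each stratum contributes $(q-1)^{a}$ times a polynomial in $q$ with nonnegative coefficients, and that the sign-alternating inversion passing from the $k_n$ to the $c_n$ via~\eqref{e:kc} preserves this positivity---behaviour that the numerical evidence supports but for which we offer no conceptual explanation.
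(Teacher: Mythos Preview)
Your proposal is appropriate: the statement is a conjecture, and the paper offers no proof either. The paper's own discussion is considerably briefer than yours. It records three points: (i) numerical verification for $n\leq 13$ using the formulas of Vera-L\'opez et al.\ for $k_n(q)$ (so further than the $n\leq 6$ you infer from the displayed table); (ii) the relationship to Higman's conjecture, essentially as you describe it; and (iii) the remark that the monoid $\CF(\Un)$ can be shown to be free by the methods of a companion paper, which already yields $c_n(q)\in\Nb$ for each fixed prime power $q$, though not the polynomiality in $t=q-1$ with nonnegative coefficients.

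Your outlined program goes beyond what the paper says and is in the right spirit, but one point deserves sharpening in light of item (iii): freeness of $\CF(\Un)$ as such is \emph{not} the obstacle---the paper asserts it holds. What remains out of reach is exactly the feature you highlight, a free generating set indexed by field-independent combinatorial data carrying $\Fb^{\times}$-labellings on a fixed set of positions, so that the generator count on $[n]$ becomes a polynomial in $q-1$ rather than merely a nonnegative integer depending on $q$. Your acknowledgment that producing such canonical forms for conjugacy classes would in particular settle Higman's conjecture is the honest assessment, and matches the paper's implicit stance.
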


Using the formulas given by Vera-L\'opez et al in~\cite[Corollaries~10-11]{VAOV:2008} for computing $k_n(q)$,
we have verified the conjecture for $n\leq 13$.

Polynomiality of $k_n(q)$ is equivalent to that of $c_n(q)$. On the other hand,
the nonnegativity of $c_n$ as a polynomial of $t$ implies that of $k_n$,
but not conversely.
Thus Conjecture~\ref{cj:stronghigman} is a strong form of Higman's.

It is possible to show, using the methods of~\cite{AM:2012}, that the monoid $\CF(\Un)$ is free.
This implies that the integers
$c_n(q)$ are nonnegative for every $n\geq 1$ and prime power $q$.


\subsection{From Hopf monoids to Hopf algebras}\label{ss:hopfalg}

It is possible to associate a number of graded Hopf algebras to
a given Hopf monoid $\bh$. This is the subject of~\cite[Part~III]{AM:2010}.
In particular, there are two graded Hopf algebras $\Kc(\bh)$
and $\Kcb(\bh)$ related by a canonical surjective morphism
\[
\Kc(\bh) \onto \Kcb(\bh).
\]
The underlying spaces of these Hopf algebras are
\[
\Kc(\bh) = \bigoplus_{n\geq 0} \bh[n]
\qand
\Kcb(\bh) = \bigoplus_{n\geq 0} \bh[n]_{\Sr_n},
\]
where $\bh[n]_{\Sr_n}$ is as in~\eqref{e:type}.
The product and coproduct
of these Hopf algebras is built from those of the Hopf monoid $\bh$ together
with certain canonical transformations. The latter involve certain combinatorial
procedures known as \emph{shifting} and \emph{standardization}.
For more details, we refer to~\cite[Chapter~15]{AM:2010}.

For example, one has that
\[
\Kcb(\bL)=\field[x]
\]
is the polynomial algebra on one primitive generator, while $\Kc(\bL)$
is the Hopf algebra introduced by Patras and Reutenauer in~\cite{PR:2004}.

According to~\cite[Section~17.4]{AM:2010}, $\Kcb(\bPi)$ is the ubiquitous
Hopf algebra of \emph{symmetric functions}, while 
$\Kc(\bPi)$ is the Hopf algebra of \emph{symmetric functions in noncommuting variables}, an object studied in various
references including~\cite[Section~6.2]{AM:2006},~\cite{BHRZ:2006,BZ:2009,RS:2006}.

For any Hopf monoid $\bh$, one has~\cite[Theorem~15.13]{AM:2010}
\[
\Kcb(\bL\times\bh) \cong \Kc(\bh).
\]
Combining with Corollary~\ref{c:super-structure} we obtain that, when the
field of coefficients is $\Fb_2$,
\[
\Kcb\bigl(\SC(\Un)\bigr) \cong \Kcb\bigl(\bL\times\bPi\bigr) \cong \Kc(\bPi).
\]
In other words, the Hopf algebra constructed from superclass functions
on unitriangular matrices (with entries in $\Fb_2$) via the functor $\Kcb$ is isomorphic
to the Hopf algebra of symmetric functions in noncommuting variables.
This is the main result of~\cite{AIM:2012}.

The freeness of the Hopf algebra $\Kc(\bPi)$, a fact known from~\cite{Har:1978,Wol:1936},
is a consequence of Proposition~\ref{p:free-sc}.

We mention that one may arrive at Corollary~\ref{c:counting} by employing
the Hopf algebra $\Kcb\bigl(\CF(\Un)\bigr)$ (rather than the Hopf monoid
$\CF(\Un)$) and appealing to Lagrange's theorem for graded connected Hopf algebras.

\subsection{Supercharacters and beyond}\label{ss:super-char}

The notion of superclass on a unitriangular group comes with a companion
notion of \emph{supercharacter}, and a full-fledged theory relating them.
This is due to the pioneering work of Andr\'e~\cite{And:1995,And:1995-2} and 
later Yan~\cite{Yan:2001}. Much of this theory extends to 
algebra groups~\cite{And:1999,DI:2008,DT:2009}. More recently,
a connection with classical work on Schur rings has been understood~\cite{Hen:2010}.

In regards to the object of present interest, the Hopf monoid $\SC(\Un)$,
this implies the existence of a second canonical linear basis,
consisting of supercharacters. The work of Andr\'e and Yan provides a
\emph{character formula}, which yields the change of basis between
superclass functions and supercharacters. 
We plan to study the Hopf monoid structure of $\SC(\Un)$ on the supercharacter basis in future work.

\section*{Appendix. On free Hopf algebras and Hopf monoids}\label{s:appendix}

A free algebra may carry several Hopf algebra structures. It always carries
a canonical one in which the generators are primitive. It turns out that
under certain conditions, any Hopf structure on a free algebra is isomorphic to the canonical one. We provide such a result below.
An analogous result holds for Hopf monoids in vector species. This 
is applied in the paper in Section~\ref{s:free}.

We assume that the base field $\field$ is of characteristic $0$.

We employ the {\em first Eulerian idempotent}~\cite{GS:1991},
~\cite[Section 4.5.2]{Lod:1998},~\cite[Section 8.4]{Reu:93}. For any connected Hopf algebra $H$, the identity map $\id:H\to H$
is locally unipotent with respect to the convolution product of $\End(H)$. 
Therefore,
\begin{equation}\label{e:euler}
\euler:=\log(\id)=\sum_{k\geq 1}\frac{(-1)^{k+1}}{k}(\id-\iota\epsilon)^{\ast k}
\end{equation}
is a well-defined linear endomorphism of $H$. Here 
\[
\iota:\field\to H
\qand 
\epsilon:H\to\field
\]
denote the unit and counit maps of $H$ respectively, and the powers are
with respect to the convolution product.
It is an important fact that if $H$ is in addition
cocommutative, then $\euler(x)$ is a primitive element of $H$ for any $x\in H$.
In fact, the operator $\euler$ is in this case a projection onto the space of primitive elements~\cite{Pat:1994},~\cite[pages 314-318]{Sch:94}.

Let $T(V)$ denote the free algebra on a vector space $V$:
\[
T(V)=\bigoplus_{n\geq 0} V^{\otimes n}.
\]
The product is concatenation of tensors. 
We say in this case that $V$ freely generates.

The unique morphisms of algebras
\[
\Delta:T(V)\to T(V)\otimes T(V)
\qand
\epsilon:T(V)\to\field
\]
given for all $v\in V$ by
\[
\Delta(v) = 1\otimes v + v\otimes 1
\qand 
\epsilon(v)=0
\]
turn $T(V)$ into a connected, cocommutative Hopf algebra.
This is the \emph{canonical} Hopf structure on $T(V)$.

\begin{proposition}\label{p:freealg}
Let $\field$ be a field of characteristic $0$.
Let $H$ be a connected cocommutative Hopf algebra over $\field$.
Suppose $H\cong T(W)$ as algebras, in such a way that the image of $W$ lies in the kernel of the $\epsilon$.
Then there exists a (possibly different) isomorphism
of Hopf algebras $H\cong T(W)$, where the latter is
endowed with its canonical Hopf structure.
\end{proposition}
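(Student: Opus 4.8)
The plan is to construct the desired isomorphism explicitly using the first Eulerian idempotent $\euler$ of $H$. First I would set $P=\euler(H)\subseteq H$. Since $H$ is connected and cocommutative, $\euler$ is a projection onto the space of primitive elements, so $P$ is precisely the space of primitives of $H$. The key structural input is that $H\cong T(W)$ as algebras with the image of $W$ contained in $\ker\epsilon$; I would first observe that this forces $W$ to sit inside the ``degree $\geq 1$'' part and that the associated grading (by tensor length in $T(W)$) makes $H$ a connected graded algebra with $H_0=\field$. I would like to claim that $\euler$ restricted to $W$ is injective and that $\euler(W)$ freely generates $H$ as an algebra; more robustly, I would show directly that $P$ freely generates $H$.

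The main steps, in order, are: (1) Using the algebra isomorphism $H\cong T(W)$, equip $H$ with the length grading; check that the coproduct need not be graded, but that it is filtered for the induced filtration, with the ``leading term'' of $\Delta$ on $W$ being the primitive coproduct $w\mapsto 1\otimes w+w\otimes 1$ modulo lower filtration degree (this uses that $W\subseteq\ker\epsilon$ and connectedness). (2) Deduce that $\euler$ acts on the degree-$n$ part as the identity plus lower-filtration-degree corrections; hence $\euler|_W\colon W\to P$ is injective, and more precisely the composite $W\into H\map{\euler}P\onto P/(\text{lower filtration})$ is an isomorphism onto the ``top'' piece. (3) Conclude that the subspace $\euler(W)$ of $H$ still freely generates $H$ as an algebra: the canonical map $T(\euler(W))\to H$ is filtered, and on associated graded it agrees with the original isomorphism $T(W)\to H$, hence is bijective. (4) Finally, define $\Phi\colon T(W)\to H$ by choosing a linear isomorphism $W\cong P$ (e.g. through $\euler|_W$ followed by any splitting, or simply fix $P$ itself as the generating space) and extending multiplicatively; since the generators $P$ are primitive in $H$ and the generators of $T(W)$ are primitive under the canonical structure, $\Phi$ is automatically a coalgebra morphism as well, hence a Hopf algebra isomorphism.

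Cleaner still: I would avoid carrying $W$ around and instead prove the statement in the form ``$H\cong T(P)$ with its canonical Hopf structure, where $P$ is the space of primitives of $H$.'' For this, the crucial point is step (3) above — showing that the primitives freely generate — which is where the hypothesis $H\cong T(W)$ (as algebras) is essential, since for a general connected cocommutative Hopf algebra the primitives generate but not freely. The filtration/associated-graded argument reduces this to the corresponding statement for $T(W)$ itself, where it is immediate because the primitives of $T(W)$ with its canonical structure are exactly the free Lie algebra, and comparing with the tensor-length grading shows $P$ surjects onto $W$ modulo higher-length terms. Once freeness of $P$ is established, the universal property of the free algebra gives a unique algebra map $T(P)\to H$ extending the inclusion; it is an isomorphism by freeness, and it transports the canonical (primitively-generated) coalgebra structure on $T(P)$ to the coalgebra structure on $H$ because $P$ consists of primitives.

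The step I expect to be the main obstacle is step (3): verifying that the space of primitives $P$ of $H$ — a priori only known to generate $H$ — actually generates it \emph{freely}. This is the one place where the algebra isomorphism $H\cong T(W)$ does real work, and the argument requires care in setting up the filtration (induced from the tensor-length grading on $T(W)$ via an arbitrary algebra isomorphism, which is not canonical) and in checking that $\Delta$ respects this filtration so that passing to the associated graded is legitimate. Everything after that — injectivity of $\euler|_P$, the coalgebra-compatibility of the extended map, and identifying the resulting structure with the canonical one — is formal, using that $\euler$ projects onto primitives and that a morphism of free algebras sending free generators to primitives is automatically a Hopf morphism.
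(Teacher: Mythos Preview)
Your main strategy—replace $W$ by $V:=\euler(W)$, show that $V$ consists of primitives and still freely generates $H$, then conclude $H\cong T(V)\cong T(W)$ as Hopf algebras—is exactly what the paper does. Two points in your write-up, however, are genuinely problematic.

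First, the filtration argument in steps~(1)--(2) does not work as stated. There is no reason for the given coproduct on $T(W)$ to be filtered for the tensor-length filtration, nor for the ``leading term'' of $\Delta(w)$ to be $1\otimes w+w\otimes 1$. For instance, on $T(\field x\oplus\field y)$ one can set $\Delta(y)=1\otimes y+y\otimes 1$ and $\Delta(x)=1\otimes x+x\otimes 1+y\otimes y$; this is coassociative, cocommutative, connected, and $y\otimes y$ sits in the \emph{same} filtration piece as $1\otimes x$ and $x\otimes 1$, so it does not disappear on the associated graded. What the paper actually uses is much weaker and does not require $\Delta$ to be filtered at all: by counitality, $\Delta_+^{(k-1)}(H_+)\subseteq H_+^{\otimes k}$, and since $H_+=\bigoplus_{n\geq 1}W^{\otimes n}$ as an algebra ideal, $\mu^{(k-1)}(H_+^{\otimes k})\subseteq\bigoplus_{n\geq k}W^{\otimes n}$. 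Hence for $w\in W$ each convolution term $(\id-\iota\epsilon)^{\ast k}(w)$ with $k\geq 2$ lands in $\bigoplus_{n\geq 2}W^{\otimes n}$, giving $\euler(w)\equiv w$ modulo that ideal. This triangularity is all you need for injectivity of $\euler|_W$ and for freeness of $V=\euler(W)$.

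Second, your ``cleaner still'' variant—showing that the full primitive space $P=\euler(H)$ freely generates $H$—is false. Already for $T(W)$ with its canonical structure and $\dim W\geq 2$, the primitives form the free Lie algebra $L(W)$, which generates $T(W)$ but certainly not freely (e.g.\ $[x,y]=xy-yx$ is a relation among primitive generators). So $T(P)\to H$ is never an isomorphism in that setting. You must work with $\euler(W)$, not with all of $P$; this is where the hypothesis $H\cong T(W)$ supplies the specific subspace on which to apply $\euler$.
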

\begin{proof}
We may assume $H=T(W)$ as algebras, for some subspace $W$ of 
$\ker(\epsilon)$. Since $H$ is connected and $\field$ is of characteristic $0$, the Eulerian idempotent $\euler$ is defined. Let $V=\euler(W)$. We show below that $V\cong W$ and that $V$ freely generates $H$. Since $H$ is cocommutative, $V$ consists of primitive elements,
and therefore $H\cong T(V)$ as Hopf algebras. This completes the proof.

Let 
\[
H_+ = \bigoplus_{n\geq 1} W^{\otimes n}.
\]
Since $\epsilon$ is a morphism of algebras, $H_+\subseteq \ker(\epsilon)$,
and since both spaces are of codimension $1$, they must agree: 
$H_+= \ker(\epsilon)$.

Define $\Delta_+:H_+ \to H_+\otimes H_+$ by
\[
\Delta_+(x)= \Delta(x) -1\otimes x - x\otimes 1.
\]
By counitality, 
\[
(\epsilon\otimes\id)\Delta_+ = 0 = (\id\otimes\epsilon)\Delta_+.
\]
Therefore, $\Delta_+(H_+)\subseteq \ker(\epsilon)\otimes\ker(\epsilon) = H_+\otimes H_+$, and hence
\[
\Delta_+^{(k-1)}(H_+) \subseteq H_+^{\otimes k}
\]
for all $k\geq 1$. In addition, since $H=T(W)$ as algebras,
\[
\mu^{(k-1)}(H_+^{\otimes k}) \subseteq \sum_{n\geq 2} W^{\otimes n}
\]
for all $k\geq 2$.

Take $w\in W$. Then
\begin{align*}
\euler(w) &= \sum_{k\geq 1}\frac{(-1)^{k+1}}{k}(\id-\iota\epsilon)^{\ast k}(w)= w + \sum_{k\geq 2}\frac{(-1)^{k+1}}{k} \mu^{(k-1)}\Delta_+^{(k-1)}(w)\\
&\equiv w + \sum_{n\geq 2} W^{\otimes n}.
\end{align*}
By triangularity, $\euler:W\to V$ is invertible and hence $V$ generates $H$.

Now take $w_1$ and $w_2\in W$. It follows from the above that
\[
\euler(w_1)\euler(w_2) \equiv w_1w_2 + \sum_{n\geq 3} W^{\otimes n},
\]
and a similar triangular relation holds for higher products. Hence $V$ generates $H$ freely.
\end{proof}

\smallskip

The Eulerian idempotent is defined for connected Hopf monoids in species,
by the same formula as~\eqref{e:euler}. 
Let $\bp$ be a species such that $\bp[\emptyset]=0$.
The free monoid $\Tc(\bp)$ and its canonical Hopf structure is discussed
in~\cite[Section~11.2]{AM:2010}.
The arguments in Proposition~\ref{p:freealg}
may easily be adapted to this setting to yield the following result.

\begin{proposition}\label{p:freemon}
Let $\field$ be a field of characteristic $0$.
Let $\bh$ be a connected cocommutative Hopf monoid in vector species over $\field$.
Suppose $\bh\cong\Tc(\bp)$ as monoids for some species $\bp$ such that 
$\bp[\emptyset]=0$.
Then there exists a (possibly different) isomorphism of Hopf monoids
$\bh\cong\Tc(\bp)$, where the latter is endowed with its canonical Hopf structure.
\end{proposition}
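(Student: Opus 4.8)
The plan is to adapt the proof of Proposition~\ref{p:freealg} verbatim, replacing tensor powers of vector spaces by the species $\Tc$-construction and the convolution algebra $\End(H)$ by the convolution monoid of species endomorphisms. First I would recall that for a connected Hopf monoid $\bh$, the identity morphism $\id:\bh\to\bh$ differs from $\iota\epsilon$ by a morphism that vanishes on $\bh[\emptyset]$, hence is locally nilpotent with respect to convolution on each finite set $I$ (the relevant iterated coproducts $\Delta^{(k-1)}_I$ vanish for $k>\abs I+1$ by connectedness). Therefore $\euler=\log(\id)=\sum_{k\geq1}\frac{(-1)^{k+1}}{k}(\id-\iota\epsilon)^{\ast k}$ is a well-defined species endomorphism of $\bh$, and since $\bh$ is cocommutative, $\euler$ projects onto the primitives (the references cited in the appendix for Hopf algebras have species analogues, or one invokes the species Cartier--Milnor--Moore setup).

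Next I would set up the triangularity. Write $\bh=\Tc(\bp)$ as monoids, so that on a nonempty finite set $I$,
\[
\bh[I]=\bigoplus_{I=I_1\sqcup\cdots\sqcup I_k}\bp[I_1]\otimes\cdots\otimes\bp[I_k],
\]
and let $\bp$ be identified with the subspecies of "length-one" tensors inside $\ker\epsilon$. Let $\bh_+$ be the augmentation subspecies $\bigoplus_{k\geq1}$; as in the algebra case, $\bh_+=\ker\epsilon$ because $\epsilon$ is a morphism of monoids and both are of "corank one" in each degree. Define $\Delta_+=\Delta-\iota\otimes\id-\id\otimes\iota$ on $\bh_+$; counitality gives $\Delta_+(\bh_+)\subseteq\bh_+\otimes\bh_+$ (here $\otimes$ is the Day convolution / Cauchy product of species), hence $\Delta_+^{(k-1)}$ lands in the $k$-fold product of $\bh_+$, while the monoid structure of $\Tc(\bp)$ sends that into the part of $\bh$ of length $\geq k$, in particular of length $\geq2$ once $k\geq2$. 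Feeding this into the formula for $\euler$ shows that for $x\in\bp[I]$,
\[
\euler(x)\equiv x\pmod{\text{length}\geq2},
\]
so $\euler$ restricted to $\bp$ is triangular, hence injective, and its image $\bq:=\euler(\bp)\cong\bp$ generates $\bh$ as a monoid. The same triangular estimate applied to products $\euler(x_1)\cdots\euler(x_k)$ shows these are congruent modulo higher length to $x_1\cdots x_k$, so $\bq$ generates $\bh$ \emph{freely}, i.e.\ $\bh\cong\Tc(\bq)$ as monoids.

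Finally, because $\bh$ is cocommutative, every element of $\bq=\euler(\bp)$ is primitive, so the monoid isomorphism $\Tc(\bq)\xrightarrow{\sim}\bh$ carries the canonical Hopf structure on $\Tc(\bq)$ to that of $\bh$; composing with the isomorphism $\bp\cong\bq$ gives the desired Hopf monoid isomorphism $\bh\cong\Tc(\bp)$ with $\Tc(\bp)$ given its canonical structure. The main obstacle — really the only point needing care beyond transcription — is to make sure all the bookkeeping about the Cauchy product of species, the iterated (co)products, and the "length filtration" on $\Tc(\bp)$ is set up so that the triangularity estimates go through exactly as in the vector-space case; the reference \cite[Section~11.2]{AM:2010} supplies the needed facts about $\Tc(\bp)$ and its canonical coproduct, and connectedness supplies local nilpotence, so no genuinely new difficulty arises. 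Hence I would simply write: "The arguments in the proof of Proposition~\ref{p:freealg} carry over \emph{mutatis mutandis}, using the Eulerian idempotent for connected Hopf monoids and the length filtration on $\Tc(\bp)$ from~\cite[Section~11.2]{AM:2010}."
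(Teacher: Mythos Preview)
Your proposal is correct and follows exactly the approach the paper takes: the paper does not give a separate proof of Proposition~\ref{p:freemon} but simply remarks that the Eulerian idempotent is defined for connected Hopf monoids by the same formula and that the arguments of Proposition~\ref{p:freealg} adapt to this setting. Your write-up is in fact more detailed than what the paper provides, but the strategy---apply $\euler$ to the generating subspecies $\bp$, use the length filtration on $\Tc(\bp)$ for triangularity, and conclude that $\euler(\bp)$ is a primitive free generating subspecies---is identical.
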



\bibliographystyle{abbrv}  
\bibliography{unitriang}

\begin{thebibliography}{10}

\bibitem{AIM:2012}
M.~Aguiar, C.~Andr\'e, C.~Benedetti, N.~Bergeron, Z.~Chen, P.~Diaconis,
  A.~Hendrickson, S.~Hsiao, I.~M. Isaacs, A.~Jedwab, K.~Johnson, G.~Karaali,
  A.~Lauve, T.~Le, S.~Lewis, H.~Li, K.~Magaard, E.~Marberg, J.-C. Novelli,
  A.~Pang, F.~Saliola, L.~Tevlin, J.-Y. Thibon, N.~Thiem, V.~Venkateswaran,
  C.~R. Vinroot, N.~Yan, and M.~Zabrocki.
\newblock Supercharacters, symmetric functions in noncommuting variables, and
  related {H}opf algebras.
\newblock {\em Advances in Mathematics}, 229(4):2310 -- 2337, 2012.

\bibitem{AL:2012}
M.~Aguiar and A.~Lauve.
\newblock Lagrange's theorem for {H}opf monoids in species, 2012.
\newblock To appear in Canadian Journal of Mathematics.

\bibitem{AM:2006}
M.~Aguiar and S.~Mahajan.
\newblock {\em Coxeter groups and {H}opf algebras}, volume~23 of {\em Fields
  Inst. Monogr.}
\newblock Amer. Math. Soc., Providence, RI, 2006.

\bibitem{AM:2010}
M.~Aguiar and S.~Mahajan.
\newblock {\em Monoidal functors, species and {H}opf algebras}, volume~29 of
  {\em CRM Monograph Series}.
\newblock American Mathematical Society, Providence, RI, 2010.

\bibitem{AM:2012}
M.~Aguiar and S.~Mahajan.
\newblock On the {H}adamard product of {H}opf monoids, 2012.
\newblock Available at \url{arXiv:1209.1363}.

\bibitem{And:1995}
C.~A.~M. Andr{\'e}.
\newblock Basic characters of the unitriangular group.
\newblock {\em J. Algebra}, 175(1):287--319, 1995.

\bibitem{And:1995-2}
C.~A.~M. Andr{\'e}.
\newblock Basic sums of coadjoint orbits of the unitriangular group.
\newblock {\em J. Algebra}, 176(3):959--1000, 1995.

\bibitem{And:1999}
C.~A.~M. Andr{\'e}.
\newblock Irreducible characters of finite algebra groups.
\newblock In {\em Matrices and group representations ({C}oimbra, 1998)},
  volume~19 of {\em Textos Mat. S\'er. B}, pages 65--80. Univ. Coimbra,
  Coimbra, 1999.

\bibitem{BLL:1998}
F.~Bergeron, G.~Labelle, and P.~Leroux.
\newblock {\em Combinatorial species and tree-like structures}, volume~67 of
  {\em Encyclopedia of Mathematics and its Applications}.
\newblock Cambridge University Press, Cambridge, 1998.
\newblock Translated from the 1994 French original by Margaret Readdy, with a
  foreword by Gian-Carlo Rota.

\bibitem{BHRZ:2006}
N.~Bergeron, C.~Hohlweg, M.~Rosas, and M.~Zabrocki.
\newblock Grothendieck bialgebras\textup, partition lattices\textup, and
  symmetric functions in noncommutative variables.
\newblock {\em Electron. J. Combin.}, 13(1):R75 (electronic), 2006.

\bibitem{BZ:2009}
N.~Bergeron and M.~Zabrocki.
\newblock The {H}opf algebras of symmetric functions and quasi-symmetric
  functions in non-commutative variables are free and co-free.
\newblock {\em J. Algebra Appl.}, 8(4):581--600, 2009.

\bibitem{DI:2008}
P.~Diaconis and I.~M. Isaacs.
\newblock Supercharacters and superclasses for algebra groups.
\newblock {\em Trans. Amer. Math. Soc.}, 360(5):2359--2392, 2008.

\bibitem{DT:2009}
P.~Diaconis and N.~Thiem.
\newblock Supercharacter formulas for pattern groups.
\newblock {\em Trans. Amer. Math. Soc.}, 361(7):3501--3533, 2009.

\bibitem{GS:1991}
M.~Gerstenhaber and S.~D. Schack.
\newblock The shuffle bialgebra and the cohomology of commutative algebras.
\newblock {\em J. Pure Appl. Algebra}, 70(3):263--272, 1991.

\bibitem{Goo:2006}
S.~M. Goodwin.
\newblock On the conjugacy classes in maximal unipotent subgroups of simple
  algebraic groups.
\newblock {\em Transform. Groups}, 11(1):51--76, 2006.

\bibitem{GR:2009}
S.~M. Goodwin and G.~R{\"o}hrle.
\newblock Calculating conjugacy classes in {S}ylow {$p$}-subgroups of finite
  {C}hevalley groups.
\newblock {\em J. Algebra}, 321(11):3321--3334, 2009.

\bibitem{Har:1978}
V.~K. Har{\v{c}}enko.
\newblock Algebras of invariants of free algebras.
\newblock {\em Algebra i Logika}, 17(4):478--487, 491, 1978.

\bibitem{Hen:2010}
A.~O.~F. Hendrickson.
\newblock Supercharacter theories and {S}chur rings, available 2010 at
  \url{arXiv:1006.1363v1}.

\bibitem{Hig:1960}
G.~Higman.
\newblock Enumerating {$p$}-groups. {I}. {I}nequalities.
\newblock {\em Proc. London Math. Soc. (3)}, 10:24--30, 1960.

\bibitem{Isa:1995}
I.~M. Isaacs.
\newblock Characters of groups associated with finite algebras.
\newblock {\em J. Algebra}, 177(3):708--730, 1995.

\bibitem{Kir:1995}
A.~A. Kirillov.
\newblock Variations on the triangular theme.
\newblock In {\em Lie groups and {L}ie algebras: {E}. {B}. {D}ynkin's
  {S}eminar}, volume 169 of {\em Amer. Math. Soc. Transl. Ser. 2}, pages
  43--73. Amer. Math. Soc., Providence, RI, 1995.

\bibitem{Lod:1998}
J.-L. Loday.
\newblock {\em Cyclic homology}, volume 301 of {\em Grundlehren Math. Wiss.}
\newblock Springer, Berlin, 2nd edition, 1998.

\bibitem{Pat:1994}
F.~Patras.
\newblock L'alg\`ebre des descentes d'une big\`ebre gradu\'ee.
\newblock {\em J. Algebra}, 170(2):547--566, 1994.

\bibitem{PR:2004}
F.~Patras and C.~Reutenauer.
\newblock On descent algebras and twisted bialgebras.
\newblock {\em Mosc. Math. J.}, 4(1):199--216, 2004.

\bibitem{Reu:93}
C.~Reutenauer.
\newblock {\em Free {L}ie algebras}, volume~7 of {\em London Math. Soc. Monogr.
  (N.S.)}.
\newblock The Clarendon Press, Oxford Univ. Press, New York, 1993.

\bibitem{Rob:1998}
G.~R. Robinson.
\newblock Counting conjugacy classes of unitriangular groups associated to
  finite-dimensional algebras.
\newblock {\em J. Group Theory}, 1(3):271--274, 1998.

\bibitem{RS:2006}
M.~H. Rosas and B.~E. Sagan.
\newblock Symmetric functions in noncommuting variables.
\newblock {\em Trans. Amer. Math. Soc.}, 358(1):215--232 (electronic), 2006.

\bibitem{Sch:94}
W.~R. Schmitt.
\newblock Incidence {H}opf algebras.
\newblock {\em J. Pure Appl. Algebra}, 96(3):299--330, 1994.

\bibitem{Slo:oeis}
N.~J.~A. Sloane.
\newblock The on-line encyclopedia of integer sequences.
\newblock Published electronically at {\tt
  www.research.att.com/$\scriptstyle\sim$njas/sequences/}, OEIS.

\bibitem{VA:1992}
A.~Vera-L{\'o}pez and J.~M. Arregi.
\newblock Conjugacy classes in {S}ylow {$p$}-subgroups of {${\rm GL}(n,q)$}.
\newblock {\em J. Algebra}, 152(1):1--19, 1992.

\bibitem{VA:1995}
A.~Vera-L{\'o}pez and J.~M. Arregi.
\newblock Some algorithms for the calculation of conjugacy classes in the
  {S}ylow {$p$}-subgroups of {${\rm GL}(n,q)$}.
\newblock {\em J. Algebra}, 177(3):899--925, 1995.

\bibitem{VA:2003}
A.~Vera-L{\'o}pez and J.~M. Arregi.
\newblock Conjugacy classes in unitriangular matrices.
\newblock {\em Linear Algebra Appl.}, 370:85--124, 2003.

\bibitem{VAOV:2008}
A.~Vera-L{\'o}pez, J.~M. Arregi, L.~Ormaetxea, and F.~J. Vera-L{\'o}pez.
\newblock The exact number of conjugacy classes of the {S}ylow {$p$}-subgroups
  of {${\rm GL}(n,q)$} modulo {$(q-1)^{13}$}.
\newblock {\em Linear Algebra Appl.}, 429(2-3):617--624, 2008.

\bibitem{Wol:1936}
M.~C. Wolf.
\newblock Symmetric functions of non-commutative elements.
\newblock {\em Duke Math. J.}, 2(4):626--637, 1936.

\bibitem{Yan:2001}
N.~Yan.
\newblock {\em Representation theory of the finite unipotent linear groups}.
\newblock PhD thesis, University of Pennsylvania, 2001.

\end{thebibliography}

\end{document}